\documentclass[13pt,a4paper, leqno]{article}
\usepackage{amssymb, amscd, amsthm, amsmath,latexsym, amstext,mathrsfs,multicol, mathtools}
\usepackage[all]{xy}
\usepackage{graphicx,enumitem}

\usepackage[colorlinks, linkcolor=blue, anchorcolor=green, citecolor=blue]{hyperref}

\newtheorem{thm}{Theorem}[section]

\newtheorem{lemma}[thm]{Lemma}
\newtheorem{prop}[thm]{Proposition}

\theoremstyle{remark}
\newtheorem{remark}[thm]{\textbf{Remark}}

\theoremstyle{definition}
\newtheorem{defn}[thm]{Definition}

\numberwithin{equation}{thm}
\newcommand{\newpara}{\noindent\refstepcounter{thm}{\bf(\thethm)\;}}  

\DeclareFontEncoding{OT2}{}{} 

\newcommand{\N}{\mathbb{N}}
\newcommand{\Q}{\mathbb{Q}}
\newcommand{\Z}{\mathbb{Z}}

\newcommand{\fm}{\mathfrak{m}}

\newcommand{\ad}{\mathrm{ad}}
\newcommand{\Alt}{\mathrm{Alt}}

\newcommand{\Br}{\mathrm{Br}}
\newcommand{\car}{\mathrm{char}}

\newcommand{\CH}{\mathrm{CH}}
\newcommand{\coker}{\mathrm{Coker}}
\newcommand{\Cor}{\mathrm{Cor}}
\newcommand{\coind}{\mathrm{coind}}

\newcommand{\disc}{\mathrm{disc}}

\newcommand{\End}{\mathrm{End}}

\newcommand{\Nrd}{\mathrm{Nrd}}
\newcommand{\Nrp}{\mathrm{Nrp}}

\newcommand{\ind}{\mathrm{ind}}

\newcommand{\hp}{\mathrm{hp}}
\newcommand{\Int}{\mathrm{Int}}

\newcommand{\Ker}{\mathrm{Ker}}

\newcommand{\rad}{\mathrm{rad}}
\newcommand{\Res}{\mathrm{Res}}
\newcommand{\rk}{\mathrm{rk}}
\newcommand{\SB}{\mathrm{SB}}
\newcommand{\Symd}{\mathrm{Symd}}
\newcommand{\Skew}{\mathrm{Skew}}
\newcommand{\ssrk}{\mathrm{ssrk}}
\newcommand{\Sym}{\mathrm{Sym}}

\newcommand{\sd}{\mathrm{sd}}
\newcommand{\Trd}{\mathrm{Trd}}

\newcommand{\rW}{\mathrm{W}}
\newcommand{\rI}{\mathrm{I}}
\newcommand{\rH}{\mathrm{H}}

\newcommand{\fp}{\mathfrak{p}}

\newcommand{\fq}{\mathfrak{q}}

\newcommand{\cD}{\mathcal{D}}

\newcommand{\sH}{\mathscr{H}}

\newcommand{\sR}{\mathscr{R}}

\newcommand{\bfGL}{\mathbf{GL}}

\newcommand{\bfSU}{\mathbf{SU}}

\newcommand{\bfPGU}{\mathbf{PGU}}

\newcommand{\et}{\text{\'et}}

\newcommand{\ilim}{\varinjlim}

\newcommand{\lra}{\longrightarrow}

\newcommand{\ov}[1]{\overline{#1}}

\newcommand{\Pic}{\mathrm{Pic}}
\newcommand{\bfSpin}{\mathbf{Spin}}
\newcommand{\bfU}{\mathbf{U}}
\newcommand{\bfO}{\mathbf{O}}

\begin{document}
\title{\textbf{Cohomological invariants of hermitian forms that detect hyperbolicity}}
\author{Yong Hu and Alexandre Lourdeaux}

\maketitle

\begin{abstract}
 By using unramified cohomology groups, we construct a full sequence of cohomological invariants for hermitian forms of any (orthogonal, symplectic or unitary) type that can be used to detect hyperbolicity. The base central simple algebra can have arbitrary degree and the base field can have arbitrary characteristic. In the orthogonal case, we work with hermitian pairs, and we apply our construction to show that over fields of separable dimension 3, hermitian pairs over quaternion algebras with trivial classical invariants are hyperbolic.  This last result extends a result of Berhuy to arbitrary characteristic.
\end{abstract}

\tableofcontents

\section{Introduction}

In the development of the algebraic theory of quadratic forms, each time the construction of a new useful cohomological invariant, such as the Clifford invariant, essentially due to Witt \cite{Witt37}, and the Arason invariant \cite{Arason75JA}, marks a milestone. Since the proof of Milnor's conjecture (\cite{Voe03MilnorConj}, \cite{OVV07}), higher degree cohomological invariants have also been well defined, so that a complete system of invariants has been available to detect the hyperbolicity of quadratic forms, over any field of characteristic different from 2. In fact, defining these invariants in characteristic 2 was made possible even earlier, thanks to Kato's work \cite{Kato82Invent}. Therefore, by Witt theory, at least theoretically one has a complete tool kit to treat the classification problem for (nondegenerate) quadratic forms over any field (at least for even-dimensional ones in characteristic 2).

The main purpose of this paper is to study the same problem for hermitian forms over central simple involutorial algebras, for which Witt theory is also applicable.

To briefly recall some previous works in this direction, let us assume for convenience that our base field has characteristic different from 2. Low degree cohomological invariants can be defined even for involutions, which correspond to similarity classes of hermitian forms. By the time that the famous book \cite{KMRT} was published, at least the following invariants have been defined and used in research: the discriminant and the Clifford algebra for orthogonal involutions, which give cohomological invariants in degrees 1 and 2 respectively; the discriminant algebra for unitary involutions, which has degree 2 as a cohomological invariant; and the Pfaffian for symplectic involutions, which serves essentially as a degree 3 invariant, as is justified in \cite{BerhuyMonsurroTignol03} and \cite{GaribaldiParimalaTignol09}. On the other hand, in their pioneering work \cite{BP2}, Bayer-Fluckiger and Parimala use the Rost invariant of related semisimple simply connected algebraic groups to define a degree 3 invariant, which they call Rost invariant, for hermitian forms in the orthogonal and unitary cases. Their idea and method, as well as those explained in the nice survey \cite{Tignol10}, are further developed to study degree 3 invariants of involutions, by Qu\'eguiner-Mathieu and Tignol \cite{QueguinerMathieuTignol15}, and by Barry, Masquelein and Qu\'eguiner-Mathieu \cite{BarryMasqueleinQueguinerMathieu22}. Among the above mentioned invariants, some are defined first in a relative setting and then in the absolute case, in a compatible way.

The classical invariants, for hermitian forms and involutions, all take values in cohomology groups up to degree 3. As far as we know, higher degree cohomological invariants have been constructed only in several special cases until the present work. An almost complete list, if the split case and the index 2 symplectic case are not counted, seems to be as follows:

\begin{itemize}
  \item invariants in all degrees for Pfister involutions (of orthogonal type in characteristic $\neq 2$) \cite{BayerParimalaQueguinerMathieu03};
  \item invariants in all degrees for quaternionic skew-hermitian forms, or equivalently orthogonal involutions, in characteristic $\neq 2$ \cite{Berhuy07ArchMath};
  \item the relative $e_4$-invariant for totally decomposable involutions on degree 8 algebras \cite{QueguinerMathieuTignol12};
  \item an absolute invariant of degree 4 for unitary involutions on degree 4 algebras (with an extra assumption on its center) \cite{RostSerreTignol2006, Tignol06}.
\end{itemize}

The main result of this paper is the construction of a full sequence of cohomological invariants $(e_n)_{n\geqslant 1}$ for hermitian forms in all the three (orthogonal, symplectic and unitary) cases in arbitrary characteristic. To be more precise, the orthogonal case refers to hermitian pairs (cf. \S\;\ref{sec2p2}), which are equivalent objects as generalized quadratic forms (or up to scalars, as quadratic pairs; see Remark\;\ref{2.9}), but which are different than orthogonal involutions in characteristic 2. Our invariants are defined absolutely, and they allow detecting the hyperbolicity of hermitian forms, at least when assuming the characteristic is not 2 in the symplectic and unitary cases. Our main theorems are stated as Theorems\;\ref{4.5}, \ref{4.15} and \ref{5.5}, showing the usefulness of our invariants.

One remark calls for special attention here. As is already known from \cite{QueguinerMathieu02Besancon} and \cite[\S\,3.4]{BayerParimalaQueguinerMathieu03}, there is no hope to define functorially an absolute $e_3$-invariant in the orthogonal case, if one insists on using a quotient of the cohomology group of the base field; see also \cite[\S\,5.3]{Tignol10} for the symplectic case. The crucial observation that makes our construction valid, which is inspired by \cite{BayerParimalaQueguinerMathieu03} and \cite{Berhuy07ArchMath}, is that the use of the unramified cohomology (cf. \S\,\ref{sec3p1}) can be a successful remedy. In fact, the unramified cohomology group we utilize can be larger than any quotient of the cohomology of the base field, but it is still canonically associated to basic input firmly tired to the problem, namely, the base field, the Brauer class of the central simple algebra over which hermitian forms are considered, and the type of hermitian forms (with orthogonal case corresponding to hermitian pairs, if one wants to be more precise). That our invariants are indeed complete for the purpose of testing hyperbolicity relies on the hyperbolicity theorems proved by Karpenko and Tignol \cite{Karpenko10DocMath, Karpenko12SciChina} in characteristic $\neq 2$, as well as a recent isotropy theorem from Medhi's thesis \cite{Medhi} for the orthogonal case in characteristic 2. This is the only place where a characteristic restriction might be needed in the symplectic and unitary cases; but see Remark\;\ref{2.17}. The construction of invariants up to degree 3 in the literature for those two cases, however, may be applied in any characteristic, in view of the results about unramified cohomology in characteristic 2 obtained recently in \cite{HuSun23}.

In the orthogonal case, we feel that a bit more work is in need if we would like to cover the case of characteristic 2. As is clear from the literature (see e.g. \cite{ElomoryTignol}, \cite{Medhi}), quadratic pairs and generalized quadratic forms are more appropriate to work with than orthogonal involutions in arbitrary characteristic. Gille \cite{Gille} introduced the concept of hermitian lifts of quadratic pairs, and discussed the Witt theory as well as  the first two classical invariants for them. We take a slightly different view on hermitian lifts and regard them as somewhat independent of lifting quadratic pairs, hence the notion of hermitian pairs. Also, we include a (standard) proof that the hyperbolicity theorem for hermitian pairs can be deduced from Medhi's isotropy theorem. As an application, our Theorem\;\ref{5.13} extends a result of Berhuy \cite[Prop.\;15]{Berhuy07ArchMath} about classification of quaternionic skew-hermitian forms over fields of cohomological dimension 3 to arbitrary characteristic.

We work with hermitian forms more often than with involutions. There are two main reasons for this. The first one is that for hermitian forms the orthogonal sum operation is naturally defined, so that our invariants induce functorial group homomorphisms. The second reason is, as is clear to experts, that defining invariants for involutions may require more restrictive assumptions, since the effect of similitudes may not be neglectable. Nevertheless, we do compare (in \eqref{4.9}, \eqref{4.19} and \S\,\ref{sec5p2}) our invariants with those defined for involutions in the literature, at least in a few cases that are important in our eyes.

In a forthcoming work we intend to apply our cohomological invariants to study Galois cohomology of classical groups over some special fields with nice arithmetic properties.

Last but not least, it is perhaps worth mentioning that, compared to a number of earlier papers, we allow non-division algebras as bases on which hermitian forms are defiend. This makes it more convenient to deal with Morita invariance and base extension functoriality for our cohomological invariants.

\

\noindent{\bf Notation and terminology}. Unless otherwise stated, we follow the notation and terminology in \cite{KMRT} for central simple algebras, involutions and algebraic groups. In particular, for a central simple algebra $A$, we denote by $\deg(A)$, $\ind(A)$ and $\exp(A)$ its degree, index and exponent respectively. The coindex of $A$ is defined as
\[
\coind(A):=\frac{\deg(A)}{\ind(A)}.
\]

The Brauer group of a field $F$ is denoted by $\Br(F)$, and for a central simple $F$-algebra $A$, we denote by $(A)\in\Br(F)$ its Brauer class.

A central simple algebra is called \emph{decomposable} if it is a tensor of two central simple algebras of smaller degree.

The opposite algebra of an algebra $D$ is denoted by $D^{op}$.

 Our main reference for  quadratic forms is \cite{EKM08}. So the group generated by Witt classes of $n$-fold quadratic Pfister forms over a field $F$ is denoted by $\rI^n_q(F)$.

Cohomology functors with values in algebraic groups, which we sometimes write with the subscript ``fppf'', refers to the cohomology with respect to the fppf topology. On the other hand, the cohomology functors with twists of $\Z/n$ or $\Q/\Z$ as coefficients are used as we explain in \S\,\ref{sec3p1}. Since the most frequently used case is associated with 2-torsion, we write $\rH^n(\cdot)$ for the functor $\rH^n\big(\cdot, \Z/2(n-1)\big)$.

For an abelian group $M$ and a positive integer $n$, we write $M[n]$ for the subgroup of $n$-torsion elements in $M$. For example, $\Br(F)[2]$ denotes the 2-torsion subgroup of the Brauer group $\Br(F)$.

If $K/F$ is a field extension, the restriction map between cohomology groups of $F$ and $K$ is usually denoted by $\Res_{K/F}$. A cohomology class over $K$ is said to \emph{descend} (\emph{uniquely}) to $F$ if it is the image under the restriction map $\Res_{K/F}$ of a (\emph{unique}) cohomology class over $F$.

Other notation, if non-standard, will be introduced along with later discussions. For example, the unramified cohomology groups $\rH^n_+(D), \rH^n_-(D)$  associated to a central simple algebra $D$, and $\rH^n(D'/F)$ associated to a central simple algebra $D'$ over a quadratic separable extension of a field $F$, will be defined in \eqref{3.6}.

\section{Involutions, hermitian forms and hyperbolicity}

This section is devoted to preliminary reviews on basic definitions and useful facts about involutorial algebras, hermitian forms and other related objects. Most of the content in this section is known in the literature. The only exceptions, if there are any, are perhaps the discussion about the Morita theory for hermitian pairs in \S\;\ref{sec2p2} and the characteristic free version of the hyperbolicity theorem for hermitian pairs (Theorem\;\ref{2.16}).

\medskip

Throughout this section, let $F$ be a field (of any characteristic).

\subsection{Involutorial algebras and hermitian forms}

\newpara\label{2.1} Let $A$ be a finite dimensional simple $F$-algebra. By the Wedderburn--Artin theorem,  there is a unique simple (right) $A$-module $P$ up to isomorphism. Thus, for any finitely generated (right) $A$-module $V$, we define the \emph{semisimple rank} $\ssrk_A(V)$ of $V$ to be the unique natural number $r\in\N$ such that $V\cong P^{\oplus r}$. Letting $K=Z(A)$ be the center of $A$, we have
\[
\ssrk_A(V)=\frac{\dim_K(V)}{\deg(A)\ind(A)}\,.
\]Here  $\deg(A)$ and $\ind(A)$ denote the degree and the (Schur) index of $A$ as a central simple $K$-algebra. For example, if $A=\mathrm{M}_n(F)$ is a matrix algebra, then $\ssrk_A(A)=n$.

Clearly, if $A=D$ is a division algebra, then the semisimple rank of any finitely generated $D$-module $V$ is the same as the usual dimension $\dim_D(V)$ of $V$ as a $D$-vector space.

Now assume more generally that $A$ is a finite dimensional semisimple $F$-algebra. Then $A$ is isomorphic to a finite product $A_1\times\cdots\times A_s$, where each $A_i$ is a finite dimensional simple $F$-algebra whose center $K_i:=Z(A_i)$ is a finite field extension of $F$. Any finitely generated $A$-module $V$ can be written as $V=V_1\times\cdots\times V_s$, where each $V_i$ is a finitely generated $A_i$-module. Then we define the semisimple rank $\ssrk_A(V)$  as the $s$-tuple
\[
\ssrk_A(V):=\big(\ssrk_{A_1}(V_1),\cdots, \ssrk_{A_s}(V_s)\big)\,\in\N^{\oplus s}\,.
\]We say that $V$ has (constant) \emph{semisimple rank} $r$ if $\ssrk_{A_i}(V_i)=r$  for every $1\leqslant i\leqslant s$. In this case we write $\ssrk_A(V)=r$.

Note that if $A$ is a central simple $F$-algebra and $F'/F$ is a field extension, then for any finitely generated $A$-module $V$, we have
\[
\ind(A)\ssrk_A(V)=\deg\big(\End_A(V)\big)=\ind(A_{F'})\ssrk_{A'}(V_{F'}),
\]where $A_{F'}=A\otimes_FF'$ and $V_{F'}=V\otimes_FF'$.  \hfill $\blacksquare$

\

\newpara\label{2.2} By an \emph{involutorial $F$-algebra} we mean a pair $(A,\,\sigma)$ consisting of a nonzero finite dimensional $F$-algebra $A$ and an involution $\sigma$ on $A$ whose restriction to $F$ is the identity. For an involutorial algebra $(A,\sigma)$ we put
\[
\begin{split}
\Sym(A,\sigma)&=\{a\in A\,|\,\sigma(a)=a\}\,,\\
\Symd(A,\sigma)&=\{a+\sigma(a)\,|\,a\in A\}\,,\\
\Skew(A,\sigma)&=\{a\in A\,|\,\sigma(a)=-a\}\,,\\
\text{and }\;\;\Alt(A,\sigma)&=\{a-\sigma(a)\,|\,a\in A\}.
\end{split}
\]

By a \emph{central simple involutorial algebra} (CSIA) over $F$, we mean an involutorial $F$-algebra $(A,\,\sigma)$ satisfying the following conditions:

\begin{itemize}[nosep]
\item $A$ is separable as an $F$-algebra.
  \item An element in the center $Z(A)$ is fixed by $\sigma$ if and only if it lies in $F$.
  \item There is no nonzero two-sided ideal $I$ of $A$ such that $I\neq A$ and $\sigma(I)\subseteq I$.
\end{itemize}

Let $A$ be a nonzero finite dimensional $F$-algebra with center $K=Z(A)$ and let $\sigma$ be an involution on $A$. Then the pair $(A,\,\sigma)$ is a CSIA over $F$ if and only if one of the following two cases happens:

\begin{enumerate}
  \item There is a central simple $F$-algebra $B$, with opposite algebra $B^{op}$, and an $F$-algebra isomorphism $f: B\times B^{op}\overset{\sim}{\longrightarrow} A$ such that $f^{-1}\circ \sigma \circ f$ is the exchange involution $(x,\,y)\mapsto (y,\,x)$ on $B\times B^{op}$.

  \item $K$  is a field, $A$ is a central simple $K$-algebra and $F=K^{\sigma}:=\{a\in K\,|\,\sigma(a)=a\}$.

\end{enumerate}

The involution $\sigma$ is called of the \emph{first kind} (resp. \emph{second kind}) if $\dim_FK=1$ (resp. $\dim_FK=2$). For involutions of the first kind, we distinguish the \emph{orthogonal type} and the \emph{symplectic type} as in \cite[(2.5)]{KMRT}. If $\sigma$ is of the second kind, we also say that it is \emph{unitary}. By the \emph{type} of a central simple involutorial algebra $(A,\sigma)$ we mean the type of the involution $\sigma$.

When $K$ is not a field, following the terminology of \cite{BecherGrenierBoleyTignol18JA}, we say that $\sigma$ is \emph{unitary of inner type}. If $K$ is a field, then the field extension $K/F$ is separable of degree $\leqslant2$, and $\sigma$ is unitary when $K/F$ is a quadratic extension.

To emphasize the relation between $\sigma$ and $K/F$, we often say that $\sigma$ is a $K/F$-\emph{involution} (of whichever type).\hfill $\blacksquare$

\

\newpara\label{2.3} Let $(A,\,\sigma)$ be a CSIA over $F$. A \emph{hermitian form} over $(A,\,\sigma)$ is a pair $(V,\,h)$, where $V$ is a finitely generated right $A$-module of constant semisimple rank and $h: V\times V\to A$ is a function such that the following properties hold for all $x,\,y,\,z\in V$ and all $\lambda \in A$:
\[
h(x+y,\,z)=h(x,\,z)+h(y,\,z)\;,\; h(x\lambda,\,y)=\sigma(\lambda)h(x,\,y)\;,\;\; h(y,\,x)=\sigma(h(x,\,y))\,.
\]We also say that $h$ is a hermitian form on the $A$-module $V$ \emph{with respect to the involution} $\sigma$, or that $h$ is a $\sigma$-\emph{hermitian form} on $V$. For simplicity, sometimes a hermitian form is simply denoted by $h$.

The (\emph{semisimple}) \emph{rank} $\rk(h)$ of a hermitian form $(V,h)$ is defined as the semisimple rank of the $A$-module $V$, i.e.,
\[
\rk(h):=\ssrk_A(V).
\]

The \emph{radical} of a hermitian form $(V,h)$ is defined to be the set
\[
\rad(h):=\{x\in V\,|\, h(x,\,y)=0 \text{ for all } y\in V\}\,.
\]We say that $h$ is \emph{nondegenerate} if $\rad(h)=0$. If there is a nonzero $x\in V$ such that $h(x,\,x)=0$, then we say that $h$ is \emph{isotropic}; otherwise we say that $h$ is \emph{anisotropic}. Clearly, any anisotropic hermitian form is nondegenerate.

For an $A$-submodule $U\subseteq V$, we define its \emph{orthogonal complement} $U^\bot$ in $V$ (with respect to $h$) by
\[
U^\bot:=\{x\in V\,|\,h(x,\,y)=0 \text{ for all } y\in U\}\,.
\]If $U=U^\bot$, then we say that $U$ is a \emph{lagrangian} of $(V,\,h)$. If $(V,h)$ is nondegenerate and has a lagrangian, then we say that it is  \emph{metabolic}. Clearly, if $h$ is metabolic with $V\neq 0$, then $h$ is isotropic.

If $\sigma$ is unitary of inner type, one can show that nondegenerate hermitian forms over $(A,\,\sigma)$ are all metabolic. So, when studying anisotropic forms we are mostly interested in the case where $K=Z(A)$ is a field and $A$ is a central simple $K$-algebra.

A hermitian form $(V,h)$ over $(A,\sigma)$ is called \emph{even} if $h(x,x)\in \Symd(A,\sigma)=\{a+\sigma(a)\,|\,a\in A\}$ for all $x\in V$ (cf. \cite[Chap.\;I, (3.1)]{Knus}). If the base field $F$ has characteristic $\car(F)\neq 2$ or if $\sigma$ is unitary, then all hermitian forms over $(A,\sigma)$ are even.

An even metabolic hermitian form is called \emph{hyperbolic}.

The isomorphism classes of nondegenerate hermitian forms over $(A,\sigma)$ form a commutative monoid with respect to the orthogonal sum operation. The quotient of the Grothendieck group of this monoid by the subgroup generated by the classes of hyperbolic forms is called the \emph{Witt group of hermitian forms} over $(A,\sigma)$ (cf. \cite[\S\,I.10]{Knus}). We denote this group by $\rW(A,\sigma)$.

The class of any metabolic form is trivial in the Witt group, by \cite[Chap.\;I, (3.7.6)]{Knus}.

The \emph{Witt group of even hermitian forms} over $(A,\sigma)$ can be defined in a similar way. We denote this group by $\rW_+(A,\sigma)$.

Note that $\rW(A,\sigma)=\rW_+(A,\sigma)=0$ if $\sigma$ is unitary of inner type. \hfill $\blacksquare$

\

\newpara\label{2.4} Let $(B,\,\gamma)$ be a CSIA over $F$. Let $(W,f)$ be a nondegenerate hermitian form over $(B,\gamma)$ and put $A=\End_B(W)$. The \emph{adjoint involution} associated to $f$ (and $\gamma$) is the involution $\ad_f=\ad_{f/\gamma}$ on $A$ determined by the following condition:
\[
\forall\;x,y\in W,\;\forall\;\alpha\in A=\End_B(W),\quad f\big(x,\alpha(y)\big)=f\big(\ad_f(\alpha)(x),y\big).
\]
For any nondegenerate hermitian form $(V,h)$ over $(A,\ad_f)$, the map
\[
f\star h: (V\otimes_AW)\times (V\otimes_AW)\longrightarrow B
\]given by
\[
f\star h (v_1\otimes w_1,\,v_2\otimes w_2)=f\big(w_1,h(v_1,v_2)(w_2)\big)
\]is a nondegenerate hermitian form over $(B,\gamma)$.

For any fixed $(W,f)$, one can check that the \emph{Morita functor}
\[
(V,h)\longmapsto (V^{\fm},\,h^{\fm}):=(V\otimes_AW,\,f\star h)
\]preserves isotropic forms, metabolic forms, even forms and so on. This functor is in fact an equivalence of categories and its quasi-inverse is also a Morita functor (\cite[\S\;I.9]{Knus}). In particular, we have  group isomorphisms
\[
\rW(A,\ad_{f/\gamma})\overset{\sim}{\longrightarrow}\rW(B,\gamma)\quad\text{and}\quad \rW_+(A,\ad_{f/\gamma})\overset{\sim}{\longrightarrow}\rW_+(B,\gamma),
\]
induced by the above Morita equivalence. \hfill $\blacksquare$

\

\newpara\label{2.5} Let $(B,\,\gamma)$ be a CSIA over $F$.   Assume that $K=Z(B)$ is a field (i.e., $\gamma$ is not unitary of inner type). Let $W$ be a finitely generated right $B$-module and put $A=\End_B(W)$. Let $\sigma$ be a $K/F$-involution on $A$ which is of the same (orthogonal, symplectic or unitary) type as $\gamma$. By \cite[(4.2)]{KMRT}, there exists a nondegenerate hermitian form $f$ on $W$ with respect to $\gamma$ such that $\sigma=\ad_{f/\gamma}$, and up to a scalar multiple from $F^*$, the form $f$ is uniquely determined by $\sigma$ and $\gamma$. \hfill $\blacksquare$

\subsection{Quadratic and hermitian pairs}\label{sec2p2}

\newpara\label{2.6} Recall the following definition from {\cite[(5.4)]{KMRT}}: Given a central simple $F$-algebra $A$, a \emph{quadratic pair} on $A$ is a pair $(\sigma,f)$, where $\sigma$ is an involution of the first kind  on $A$ and $f$ is an $F$-linear map
\[ f : \mathrm{Sym}(A,\sigma) \to F \]
 such that

\begin{itemize}[nosep]
  \item $\dim_F\Sym(A,\sigma)=\frac{n(n+1)}{2}$, where $n=\deg(A)$;
  \item $\Trd_A(\Skew(A,\sigma))=0$, where $\Trd_A:A\to F$ denotes the reduced trace map;
  \item $f(x+\sigma(x)) = \mathrm{Trd}_A(x)$ for all $x \in A$.
\end{itemize}
 The map $f$ is called a \emph{semi-trace} for the pair $(A,\sigma)$, and the triple $(A,\sigma,f)$ is called a \emph{central simple algebra with quadratic pair} (CSAQP for short) over $F$.

If $\car(F)\neq 2$, then the involution $\sigma$ must be orthogonal and the semi-trace $f$ is uniquely determined by $f=\frac{1}{2}\Trd_A$.

If $\car(F)=2$, then the involution must be symplectic and hence $\deg(A)$ must be even.

The isotropy and hyperbolicity properties of a CSAQP are defined in  \cite[(6.5) and (6.12)]{KMRT}.

Although not uniquely determined by the summands, orthogonal sums of  two given  central simple algebras with quadratic pair can be defined as in \cite[Def.\;1.4]{BerhuyFringsTignol}.

Let us define the \emph{degree} of a central simple algebras with quadratic pair $(A,\sigma,f)$ to be the degree of the underlying algebra $A$. If $(A,\sigma,f)$ is an orthogonal sum of $(A_1,\sigma_1,f_1)$ and $(A_2,\sigma_2,f_2)$, then  $A_1, A_2$ and $A$ are all Brauer equivalent, and $\deg(A)=\deg(A_1)+\deg(A_2)$ (\cite[Prop.\;1.2]{BerhuyFringsTignol}). \hfill $\blacksquare$

\begin{prop}[{\cite[Prop. 1.11]{BerhuyFringsTignol}}]\label{2.7}
Every CSAQP is a direct sum of an anisotropic one and an hyperbolic one.

Also, if $\mathfrak{q}$ is an orthogonal sum of two CSAQPs $\mathfrak{q}_1$ and $\mathfrak{q}_2$, and if $\mathfrak{q}$ and $\mathfrak{q}_1$ are hyperbolic, then  $\mathfrak{q}_2$ is hyperbolic.
\end{prop}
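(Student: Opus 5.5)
The plan is to pass from quadratic pairs to generalized quadratic forms, where both statements become standard Witt theory, and then translate back. Fix a division algebra $D$ Brauer equivalent to $A$, with an involution $\theta$ of the first kind. Following \cite{KMRT} and \cite{BerhuyFringsTignol}, a CSAQP $(A,\sigma,f)$ is then the adjoint of a nondegenerate generalized quadratic form $(V,q)$ over $(D,\theta)$, in the sense of Tits/Elomary--Tignol. Here $A\cong\End_D(V)$, and $q$ is unique up to a scalar in $F^*$. Under this dictionary, isotropic (resp. hyperbolic) quadratic pairs correspond to isotropic (resp. hyperbolic) generalized quadratic forms. By \cite[Def.\;1.4]{BerhuyFringsTignol}, an orthogonal sum $\mathfrak q\in\mathfrak q_1\boxplus\mathfrak q_2$ means: there are forms $q_1,q_2$ with adjoints $\mathfrak q_1,\mathfrak q_2$ such that $\mathfrak q$ is adjoint to $q_1\perp q_2$. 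The ambiguity in the sum is exactly the choice of scalars on $q_1$ and $q_2$ separately.

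For the first assertion, take $q$ with adjoint $\mathfrak q$. I will run the usual Witt decomposition argument for generalized quadratic forms. If $q$ is isotropic, choose a totally isotropic vector line $xD$. Nondegeneracy of the polar hermitian form $b_q$ gives $y$ with $b_q(x,y)=1$, and after adjusting $y$ by an element of $xD$ one can make $q(y)=0$. This uses that the values of $q$ live in $D/\Alt(D,\theta)$ and that $\Symd$ surjects appropriately. Then $\langle x,y\rangle$ is a hyperbolic plane and splits off orthogonally, since $b_q$ is nondegenerate on it. Induction on $\dim_D V$ gives $q\simeq q_{an}\perp m\mathbb H$. Taking adjoints, $\mathfrak q$ is an orthogonal sum of the adjoint of $q_{an}$, which is anisotropic, and the adjoint of $m\mathbb H$, which is hyperbolic. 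Degenerate edge cases ($q_{an}=0$ or $m=0$) are harmless with the conventions of \cite{BerhuyFringsTignol}.

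For the second assertion, write $\mathfrak q$ as adjoint to $q_1\perp q_2$, with $q_i$ adjoint to $\mathfrak q_i$. Since $\mathfrak q_1$ is hyperbolic, $q_1$ is hyperbolic, and this holds for every choice of scalar because $\lambda\mathbb H\simeq\mathbb H$. Likewise $q_1\perp q_2$ is hyperbolic. Then $q_1\perp q_2\simeq q_1\perp(\dim q_2/2)\mathbb H$, up to an appropriate count of hyperbolic planes. Witt cancellation for generalized quadratic forms gives $q_2$ hyperbolic, hence $\mathfrak q_2$ is hyperbolic. Note that the dimension of $q_2$ must be even in the $D$-sense, as forced by the dimension count.

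The main obstacle is Witt cancellation for generalized quadratic forms in characteristic $2$, where diagonalization fails. I would argue via the Witt group instead. The classes satisfy $[q_2]=[q_1\perp q_2]-[q_1]=0$, so $q_2$ is stably hyperbolic. To conclude, I will show that an anisotropic form that is Witt-trivial is zero, using the uniqueness of the anisotropic part; this follows from the hyperbolic-splitting lemma above together with the standard argument in \cite[Chap.\;I]{Knus} for even hermitian/quadratic modules. Then $q_2\simeq q_{2,an}\perp k\mathbb H$ with $q_{2,an}$ Witt-trivial, so $q_{2,an}=0$. If this becomes unwieldy, I would simply cite \cite[Prop.\;1.11]{BerhuyFringsTignol}, which is exactly the statement.
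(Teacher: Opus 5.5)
The paper gives no argument of its own here. It simply quotes \cite[Prop.~1.11]{BerhuyFringsTignol}, which is your stated fallback. Your self-contained route through generalized quadratic forms is the natural one. The first assertion is fine, since only the existence of a decomposition is claimed, and splitting off hyperbolic planes with induction gives that. You don't need anything about $\Symd$: once $b_q(x,y)=1$, the value $q(y+xd)$ runs through all of $D/\Alt(D,\theta)$ as $d$ varies, and $q(x)=0$ forces $b_q(x,x)=0$.

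The gap is in the second assertion. It rests entirely on Witt cancellation for generalized quadratic forms, and the argument you sketch for cancellation is circular.

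\begin{itemize}
\item From $[q_2]=0$ you only obtain $q_2\perp a\mathbb{H}\simeq b\mathbb{H}$ for some $a,b$.
\item To conclude that the anisotropic part of $q_2$ vanishes, you invoke uniqueness of the anisotropic part. But the hyperbolic-splitting lemma yields only the existence of a Witt decomposition, and uniqueness is ordinarily derived \emph{from} cancellation.
\item A direct dimension count does not rescue this. Let $q$ be anisotropic on $V_q$, $H\simeq a\mathbb{H}$, $L$ a totally isotropic subspace of half the dimension in $V_q\perp H$, and $M$ a lagrangian of $H$. One gets $\dim_D(L\cap M^{\perp})\geqslant \tfrac12\dim_D V_q$ and $L\cap M^{\perp}\subseteq M$. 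This only yields $\dim_D V_q\leqslant 2a$, not $V_q=0$.
\end{itemize}

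What is missing is a genuine Witt extension (equivalently, cancellation) theorem for $D/\Alt(D,\theta)$-valued forms over a division algebra with involution. It must hold in characteristic $2$, where diagonalization is unavailable. You need to prove it, e.g.\ by a transvection argument adapted to these forms, or cite a precise reference for it. With that input, your deductions of both assertions go through. Without it, the second assertion is established only through the citation the paper uses.
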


\begin{defn}\label{2.8} Let $(D,\theta)$ be a CSIA of the first kind over $F$. Assume that it is  orthogonal if  $\car(F)\neq 2$.

For a hermitian form $(V,h)$ over $(D,\theta)$, we  write simply $\mathrm{Sym}(h)=\mathrm{Sym}(\mathrm{End}_D(V),\mathrm{ad}_{h/\theta})$.

A \emph{hermitian pair} over $(D,\theta)$ is a pair $(h,f)$, where $h=(V,h)$ is a nondegenerate hermitian form over $(D,\theta)$, and $f$ is a semi-trace for $(\End_D(V),\mathrm{ad}_{h/\theta})$.

Note that in order for $(h,f)$ to be a hermitian pair, $h$ must be even.

A hermitian pair $(h,f)$ over $(D,\theta)$ is said to be \emph{isotropic} if  $(\mathrm{End}_E(V),\mathrm{ad}_{h/\theta},f)$ is isotropic as a CSAQP. It is called \emph{anisotropic} otherwise.

Similarly, $(h,f)$ is called \emph{hyperbolic} if $(\mathrm{End}_D(V),\mathrm{ad}_{h/\theta},f)$ is hyperbolic.

The (\emph{semisimple}) \emph{rank} of a hermitian pair $(h,f)$ is defined as the rank of the hermitian form $h$.

Let $(A,\sigma,f)$ be a CSAQP. If $h=(V,h)$ is a nondegenerate hermitian form over $(D,\theta)$ such that $(A,\sigma)=(\End_D(V),\mathrm{ad}_{h/\theta})$, then, as in \cite[\S\,7.4.2]{Gille}, we say that the hermitian pair $(h,f)$ is a \emph{hermitian lift} of $(A,\sigma,f)$. \hfill $\blacksquare$
\end{defn}

\begin{remark}\label{2.9}
  Another notion that is often used to lift quadratic pairs is that of \emph{generalized quadratic form}; see e.g. \cite{ElomoryTignol}. As explained in \cite[\S\,1.3]{Medhi}, generalized quadratic forms can be viewed as equivalent to hermitian pairs. \hfill $\blacksquare$
\end{remark}

\newpara\label{2.10} Let $(D,\theta)$ be a CSIA of the first kind over $F$, of  orthogonal type if  $\car(F)\neq 2$.

Let $(h_1,f_1)$ and $(h_2,f_2)$ be hermitian pairs over $(D,\theta)$. Their \emph{orthogonal sum}, denoted by $(h_1,f_1)\perp (h_2,f_2)$, is the hermitian pair $(h,f)$ defined as follows: Let $V_i$ be the underlying $D$-module on which $h_i$ is defined for $i=1,2$. The hermitian form $h$ is taken to be the orthogonal sum $h_1\perp h_2$, defined on $V:=V_1\oplus V_2$. Thanks to \cite[Prop. 7.4.2]{Gille}, there exists a unique semi-trace $f$ on $(\mathrm{End}_D(V),\mathrm{ad}_h)$ such that $f : \mathrm{Sym}(h) \to F$ restricts to $f_1$ and $f_2$ via the natural maps \[ \mathrm{Sym}(h_i) \hookrightarrow \mathrm{Sym}(h), \; i=1,2 . \] Note that \cite[Prop. 7.4.2]{Gille} is written for $D$ a division algebra, but the proof works exactly the same for a general $D$. Thus we may define the orthogonal sum $(h_1,f_1) \perp (h_2,f_2)$ as $(h,f)$.

Actually, by \cite[(5.7)]{KMRT}, for $i=1,2$ we can choose an element  $l_i \in \mathrm{End}_D(V_i)$  such that
\[
f_i(s)=\mathrm{Trd}_{\mathrm{End}_D(V_i)}(l_i s)\quad\text{ for all } s \in \mathrm{Sym}(h_i).
\] Then the semi-trace of the orthogonal sum of $(h_1,f_1)$ and $(h_2,f_2)$ is given by
\[
f(s)=\mathrm{Trd}_{\mathrm{End}_D(V)}(ls)\quad\text{ for } l=l_1 \oplus l_2.
\]

The CSAQP associated to $(h,f)=(h_1,f_1)\perp (h_2,f_2)$ is an orthogonal sum of the two  associated to $(h_i,f_i)$. The advantage of forming the orthogonal sum in terms of hermitian pairs is that this orthogonal sum is uniquely determined by its summands.

Having defined the orthogonal sum operation for hermitian pairs, we can define the \emph{Witt group of hermitian pairs} over $(D,\theta)$ in the natural way. We denote this group by $\rW_{\hp}(D,\theta)$. \hfill $\blacksquare$

\medskip

\newpara\label{2.11} Let $(D,\theta)$ be a CSIA over $F$ as in \eqref{2.10}. Let $(V_0,h_0)$ be a nondegenerate even hermitian form over $(D,\theta)$, and put $(E,\tau)=(\End_D(V_0),\,\ad_{h_0/\theta})$.

Let $((W,h),f)$ be a hermitian pair over $(E,\tau)$. Morita equivalence for hermitian forms (cf. \eqref{2.4}) implies that the map
\[
\phi=\phi_{h,f} : \mathrm{End}_E(W)  \longrightarrow \mathrm{End}_D(W\otimes_E V_0), \quad\alpha \longmapsto  \alpha\otimes \mathrm{id}_{V_0}
\] induces an isomorphism of involutorial algebras
\[
\big(\mathrm{End}_E(W),\mathrm{ad}_{h/\tau}\big) \overset{\sim}{\to} \big(\mathrm{End}_D(W \otimes_E V_0), \mathrm{ad}_{h_0 \star h/\theta}\big).
\] The map $\phi$ induces an $F$-linear isomorphism between the spaces of symmetric elements, whence the possibility to define
\[ f' = f \circ \phi^{-1} : \mathrm{Sym}(h_0 \star h) \longrightarrow F . \]
It is easy to check that $f'$ is a semitrace for $(\mathrm{End}_D(W \otimes_E V_0),\mathrm{ad}_{h_0 \star h})$, so we obtain a Morita functor
\[
((W,h),f)\longmapsto ((W^{\fm},h^{\fm}),f^{\fm}):=\big((W\otimes_E V_0,h_0 \star h),f'\big)
\]
from the category of hermitian pairs over $(E,\tau)$ to that over $(D,\theta)$. Similar to the case of hermitian forms, the above Morita functor preserves orthogonal sums and the isotropy and hyperbolicity properties; it is an equivalence of categories and induces an isomorphism of Witt groups
\[
\rW_{\hp}(E,\tau)\overset{\sim}{\longrightarrow} \rW_{\hp}(D,\theta).
\]Note that the above Morita functor depends on the choice of the hermitian form $h_0$. \hfill $\blacksquare$

\subsection{Generic index reduction and hyperbolicity theorems}

\newpara\label{2.12} Let $m$ be a positive integer, $\alpha\in\Br(F)$ and $K/F$ a field extension. Following \cite{Blankchet91}, we say that $K$ is a $\frac{1}{m}$-\emph{splitting field}  of $\alpha$ if $\ind(\alpha_K)\,|\,m$. If moreover the free composite $KE$ is a purely transcendental extension of $E$ for every $\frac{1}{m}$-splitting field $E$, then we say that $K$ is a \emph{generic $\frac{1}{m}$-splitting field} of $\alpha$.

  A \emph{generic splitting field} is a generic $\frac{1}{m}$-splitting field for $m=1$.

Let $A$ be a central simple $F$-algebra that represents the Brauer class $\alpha$. Let  $X=\mathrm{SB}_m(A)$ be the generalized Severi--Brauer $F$-variety of right ideals of reduced dimension $m$ in $A$ (cf. \cite[(1.16)]{KMRT}). Then, by \cite[Theorem\;2]{Blankchet91}, the function field $F(X)$ is a generic $\frac{1}{m}$-splitting field of $\alpha$. (For $m=1$, this was first proved by Amitsur \cite[Thm. A]{Amitsur}.)\hfill $\blacksquare$

\medskip

\newpara\label{2.13} Let $F'/F$ be a quadratic separable field extension and let $\alpha'\in\Br(F')$.  By an $F$-\emph{based splitting field} of $\alpha'$ we mean a field extension $K/F$ such that $K\otimes_FF'$ is a splitting field of $\alpha'$. A \emph{generic $F$-based splitting field} is an $F$-based splitting field $K$ such that for every $F$-based splitting field $E$, the free composite $KE$ is a purely transcendental extension of $E$.

Let $A'$ be a central simple $F'$-algebra that represents the Brauer class $\alpha'$. Let  $X=\mathrm{R}_{F'/F}\mathrm{SB}(A')$ be the Weil restriction to $F$ of the Severi--Brauer $F'$-variety of $A'$. Then the function field $F(X)$ is a generic $F$-based splitting field of $\alpha'$.

Indeed, if $E$ is another $F$-based splitting field of $\alpha'$, then $E':=E\otimes_FF'$ is a splitting field of $A'_E:=A'\otimes_FE=A'\otimes_{F'}E'$, and the free composite $E\cdot F(X)$ is the function field of the $E$-variety $X_E:=X\times_FE=\mathrm{R}_{E'/E}\SB(A'_E)$. Since $A'_E$ is split over $E'$, the Severi--Brauer variety $\SB(A'_E)$ is a projective space over $E'$, and hence $X_E=\mathrm{R}_{E'/E}\SB(A'_E)$ is rational over $E$. \hfill $\blacksquare$

\begin{defn}[{\cite{BlackQueguinerMathieu14}}]\label{2.14}
A  generic splitting (resp. $\frac{1}{2}$-splitting) field of a Brauer class or a central simple algebra is also called a \emph{generic index reduction field of orthogonal} (resp. \emph{symplectic}) \emph{type} of it.

For a central simple $F$-algebra $A$, we define
\[
  F_+(A):=F(\SB(A))\quad \text{ and }\quad F_-(A):=F(\SB_2(A)).
\]Namely, $F_+(A)$ is the function field of the Severi--Brauer variety $\SB(A)$, and $F_-(A)$ is the function field of the generalized Severi--Brauer variety $\SB_2(A)$. As we have said in \eqref{2.12}, $F_+(A)$ (resp. $F_-(A)$) is a generic index reduction field of orthogonal (resp. symplectic) type of the Brauer class $(A)\in\Br(F)$.

Let $F'/F$ be a quadratic separable extension and $\alpha'\in \Br(F')$. Then a generic $F$-based splitting field of $\alpha'$ is also called a \emph{generic index reduction field of $F$-unitary type} of $\alpha'$.

For a central simple $F'$-algebra $A'$, we denote by
\[
  F(A'):=F\big(\mathrm{R}_{F'/F}\SB(A')\big)
\]the function field of the $F$-variety $\mathrm{R}_{F'/F}\SB(A')$. We have seen in \eqref{2.13} that $F(A')$ is a generic index reduction field of $F$-unitary type of the Brauer class $(A')\in\Br(F')$.

Let $(D,\theta)$ be a CSIA over $F$ such that $Z(D)$ is a field. A \emph{generic index reduction field} of $(D,\theta)$ is a generic index reduction field of the same type as $\theta$ of the Brauer class $(D)$.  \hfill $\blacksquare$
\end{defn}

Later in the paper we will need the following theorem due to Karpenko and Tignol, which we state in terms of hermitian forms instead of involutions.

\begin{thm}[{\cite[Thms.\;1.1 and A.1]{Karpenko10DocMath}, \cite[Thm.\;1.1]{Karpenko12SciChina}}]\label{2.15}
  Let $(D,\theta)$ be a CSIA over $F$ such that $Z(D)$ is a field. Assume $\car(F)\neq 2$.

  Then a nondegenerate hermitian form $h$ over $(D,\theta)$ is hyperbolic if and only if it is hyperbolic after base extension to a generic index reduction field of $(D,\theta)$.
\end{thm}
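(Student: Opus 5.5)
The direction ``hyperbolic implies hyperbolic after extension'' is clear, since hyperbolicity is preserved under any base change. For the converse, I would reduce to the three cited statements of Karpenko (and Karpenko--Tignol), which are formulated for involutions on a fixed algebra and for \emph{specific} function fields. I would then pass to an arbitrary generic index reduction field.

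\textbf{Step 1: from forms to involutions.} Let $(V,h)$ be nondegenerate over $(D,\theta)$ and put $(A,\sigma)=(\End_D(V),\ad_{h/\theta})$. Then $A$ is Brauer equivalent to $D$, over the same center $K=Z(D)$, and $\sigma$ has the same type as $\theta$. Since $\car(F)\neq 2$, all forms are even. By \eqref{2.5}, $h$ is determined by $\sigma$ up to a factor in $F^*$, and scaling preserves hyperbolicity. Hence $h$ is hyperbolic if and only if $\sigma$ is hyperbolic in the sense of \cite{KMRT}, i.e.\ $A$ contains an idempotent $e$ with $\sigma(e)=1-e$. The same equivalence holds after any base change $L/F$, because $(A,\sigma)_L=(\End_{D_L}(V_L),\ad_{h_L})$. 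Morita invariance \eqref{2.4} lets us pass freely between $D$ and $A$ if needed.

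\textbf{Step 2: invoke the cited theorems on specific fields.} In the three cases, Karpenko's results give the following.
\begin{itemize}
\item Orthogonal case: $\sigma$ is hyperbolic if it becomes hyperbolic over $F(\SB(A))=F_+(A)$ \cite[Thm.\;1.1]{Karpenko10DocMath}.
\item Symplectic case: the same holds over $F(\SB_2(A))=F_-(A)$ \cite[Thm.\;A.1]{Karpenko10DocMath}.
\item Unitary case: the same holds over $F(\mathrm{R}_{K/F}\SB(A))=F(A)$ \cite[Thm.\;1.1]{Karpenko12SciChina}.
\end{itemize}
By \eqref{2.12}--\eqref{2.14}, each of these fields is a generic index reduction field of $(D,\theta)$. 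This uses the fact that $(A)=(D)$ in $\Br(K)$.

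\textbf{Step 3: independence of the choice of generic field.} Let $L$ be any generic index reduction field of $(D,\theta)$ over which $h_L$ is hyperbolic, and let $L_0$ be the specific field from Step 2. Both $L$ and $L_0$ are index reduction fields of the relevant kind. By genericity of $L$, the free composite $L L_0$ is purely transcendental over $L_0$, say $L L_0\cong L_0(t_1,\dots,t_n)$. Now $h$ is hyperbolic over $L$, hence over $L L_0$. It remains to descend hyperbolicity from $L_0(t_1,\dots,t_n)$ to $L_0$; then Step 2 concludes.

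For the descent, by Witt cancellation and decomposition it suffices to show the following: an anisotropic hermitian form over a CSIA over a field $E$ stays anisotropic over $E(t)$, and then iterate. I would prove this by the standard valuation argument. Suppose $h(x,x)=0$ with $x\in V\otimes E[t]$ nonzero. Clear denominators and divide by the largest power of $t$, which is possible because $D$ is a free $E$-module. Then reduce modulo $t$, or compare leading coefficients in $t$, to obtain a nonzero $x_0\in V$ with $h(x_0,x_0)=0$. Equivalently, use the $t$-adic or degree valuation together with the fact that $D\otimes E(t)$ is still a CSIA with the same Brauer class. The leading-coefficient version works cleanly: the top-degree coefficient of $h(x,x)$ is $h(x_d,x_d)$, where $x_d$ is the leading coefficient of $x$. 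So anisotropy of $h$ over $E$ gives a contradiction.

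\textbf{Main obstacle.} I expect the main care to be needed in Step 3, in handling an arbitrary generic field rather than Karpenko's specific function fields. One must make sure the purely-transcendental descent of hyperbolicity is justified for hermitian forms over possibly non-division algebras. The cleanest route is to apply Morita equivalence \eqref{2.4} first to reduce to $D$ division, and there run the leading-coefficient argument above. Step 1 is mostly bookkeeping between the form and involution notions of hyperbolicity.
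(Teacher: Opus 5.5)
Your proposal is correct and matches the paper, which gives no argument of its own. It only cites Karpenko's orthogonal theorem, Tignol's symplectic appendix and Karpenko's unitary theorem, and tacitly translates their statements about involutions over the specific fields $F_+(A)$, $F_-(A)$, $F(A)$ into statements about hermitian forms over an arbitrary generic index reduction field. Your Steps 1 and 3 make this implicit translation explicit, and the descent along the purely transcendental extension $LL_0/L_0$ by the leading-coefficient argument is sound (if $LL_0/L_0$ has infinite transcendence degree, hyperbolicity is already witnessed over finitely many of the variables).
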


The analog of Theorem\;\ref{2.15} for the isotropy property is also true, according to \cite{Karpenko10ComptesRendu}, \cite{Karpenko13AJM} and \cite{KarpenkoZhykhovich13}. That isotropy theorem is not  directly used in this paper. However, we will use a recent isotropy theorem of Medhi to establish the following orthogonal case of Theorem\;\ref{2.15} in characteristic 2.

\begin{thm}\label{2.16}
Assume $\car(F)=2$. Let $(D,\theta)$ be a CSIA  of symplectic type over $F$.

Then  a hermitian pair over $(D,\theta)$ is hyperbolic if and only if it is hyperbolic after base extension to a generic index reduction field of orthogonal type of $D$.
\end{thm}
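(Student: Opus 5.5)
Only the nontrivial direction needs an argument: a hyperbolic hermitian pair stays hyperbolic after any base change, so we must show that a pair $(h,f)$ over $(D,\theta)$ that becomes hyperbolic over $F_+(D)$ is already hyperbolic over $F$. We argue by induction on the rank $\rk(h)$. The input we need from Medhi's thesis is the isotropy statement: an anisotropic CSAQP (equivalently, an anisotropic hermitian pair over $(D,\theta)$, via Remark~\ref{2.9}) stays anisotropic over the function field of the Severi--Brauer variety of the underlying algebra. Since $\End_D(V)$ is Brauer equivalent to $D$, that field is $F_+(D)$ up to a purely transcendental extension, and such extensions do not change isotropy.

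First we reduce to the anisotropic case. By Proposition~\ref{2.7}, transported to hermitian pairs through the Morita functor of \eqref{2.11} and the compatibility of orthogonal sums in \eqref{2.10}, write $(h,f)\simeq (h_0,f_0)\perp \mathbb{H}$ with $(h_0,f_0)$ anisotropic and $\mathbb{H}$ hyperbolic. This should hold already at the level of pairs over $(D,\theta)$: an isotropic pair over $(D,\theta)$ contains a hyperbolic plane pair, which splits off orthogonally. If $(h,f)$ becomes hyperbolic over $K:=F_+(D)$, then $(h_0,f_0)_K\perp\mathbb{H}_K$ is hyperbolic. By the cancellation part of Proposition~\ref{2.7}, applied to the corresponding CSAQPs over $K$, the pair $(h_0,f_0)_K$ is hyperbolic. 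If $h_0\neq 0$, then $(h_0,f_0)_K$ is isotropic, so Medhi's theorem makes $(h_0,f_0)$ isotropic over $F$, a contradiction. Hence $h_0=0$ and $(h,f)$ is hyperbolic.

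This argument never actually uses induction beyond the decomposition, provided the decomposition and cancellation work for hermitian pairs, which is what I expect. If $D$ is not division, we first apply Morita equivalence (\eqref{2.11}) to move to the division algebra Brauer equivalent to $D$, with a symplectic involution. Such an involution exists because $\exp(D)\le 2$, and in characteristic $2$ it can be taken symplectic. Hyperbolicity, isotropy and sums are preserved on both sides of the base change, because the Morita functor commutes with scalar extension.

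The main obstacle is matching Medhi's isotropy theorem to exactly our setting. It is phrased for generalized quadratic forms, or quadratic pairs, over division algebras and over the function field of $\SB(D)$. We must check that hermitian-pair isotropy in the sense of Definition~\ref{2.8} agrees with isotropy of the associated CSAQP in the sense of \cite[(6.5)]{KMRT}, which is true by definition here. We must also check that the Morita transport preserves the field $F_+(D)$ up to purely transcendental extension; this follows from the Amitsur generic-splitting property in \eqref{2.12}, since Brauer-equivalent algebras have stably birational Severi--Brauer varieties.
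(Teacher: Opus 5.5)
\textbf{Gap: the form of Medhi's theorem you use.} Your argument relies on a version of Medhi's isotropy theorem that is stronger than what \cite[Th.~4.3.1]{Medhi} provides, as the paper uses it. That theorem is the characteristic~$2$ analogue of Karpenko's isotropy theorem. It does \emph{not} say that an anisotropic CSAQP $\fq=(A,\sigma,f)$ stays anisotropic over $F(\SB(A))$. It says only this: if $\fq$ becomes isotropic over $F(\SB(A))$, then $\fq_{F'}$ is isotropic for some finite extension $F'/F$ of \emph{odd} degree. Passing from isotropy over $F'$ to isotropy over $F$ would require a Springer-type descent theorem for \emph{isotropy} of quadratic pairs, and that is not available in general. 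So your step ``Medhi's theorem makes $(h_0,f_0)$ isotropic over $F$, a contradiction'' does not follow. Your own remark that the induction is never used is the symptom of this.

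\textbf{How to close it.} The missing argument is the induction on degree that you set up and then discarded, combined with odd-degree descent of \emph{hyperbolicity}. Work with CSAQPs (isotropy and hyperbolicity are intrinsic there, so your Morita reduction to a division algebra is unnecessary).
\begin{enumerate}
\item Reduce, as you do via Proposition~\ref{2.7}, to the case where $\fq$ is anisotropic and $\fq_K$ is hyperbolic, with $K=F(\SB(A))$.
\item Take $F'/F$ of odd degree with $\fq_{F'}$ isotropic. Write $\fq_{F'}=\fq'_1\perp\fq'_2$ with $\fq'_1$ hyperbolic and $\fq'_2$ anisotropic, so that $\deg\fq'_2<\deg\fq$.
\item The composite $K':=KF'$ is a generic splitting field of $A_{F'}$, and $\fq_{K'}$ is hyperbolic. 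By cancellation, $(\fq'_2)_{K'}$ is hyperbolic.
\item The induction hypothesis, applied over the field $F'$, gives that $\fq'_2$ is hyperbolic. Hence $\fq_{F'}=\fq'_1$ is hyperbolic.
\item The Springer theorem for hyperbolicity of quadratic pairs \cite[Th.~1.14]{BerhuyFringsTignol} then shows that $\fq$ itself is hyperbolic.
\end{enumerate}
Hyperbolicity, unlike isotropy, is known to descend along odd-degree extensions, which is why this route works. With this correction, the rest of your outline matches the paper's proof: the reduction to the anisotropic part and the identification of $F_+(D)$ with $F(\SB(A))$ up to a purely transcendental extension.
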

\begin{proof}
In order that evidence could be cited directly from the literature, let us prove the equivalent form of the theorem that is stated in terms of quadratic pairs.

Let $\mathfrak{q}:=(A,\sigma,f)$ be a CSAQP over $F$, and let  $K$ be a generic splitting field of $A$. Suppose that the base extension $\mathfrak{q}_K$ of $\mathfrak{q}$ is hyperbolic.  We need to prove that $\mathfrak{q}$ itself is already hyperbolic.

By Prop.\;\ref{2.7}, $\mathfrak{q}$ can be written as an orthogonal sum of two  central simple algebras with quadratic pair $\mathfrak{q}_1,\mathfrak{q}_2$ with $\mathfrak{q}_1$ hyperbolic  and $\mathfrak{q}_2$ anisotropic. We also see from Prop.\;\ref{2.7} that $(\mathfrak{q}_2)_K$ is hyperbolic. According to \cite[Prop.\;1.2]{BerhuyFringsTignol}, the underlying central simple algebras $A_1$ and $A_2$ of $\mathfrak{q}_1$ and $\mathfrak{q}_2$ are Brauer equivalent to $A$. So $K$ is also a generic splitting field of $A_i$ for $i=1,2$. Therefore, replacing $\mathfrak{q}$ with $\mathfrak{q}_2$ if necessary, we may assume that $\mathfrak{q}$  is anisotropic.

By the isotropy theorem for quadratic pairs, recently proved in \cite[Th. 4.3.1]{Medhi},  $\fq_{F'}$ is isotropic for some odd degree field extension $F'/F$. Write $\fq_{F'}$ as an orthogonal sum of two central simple $F'$-algebras with quadratic pair, say $\fq_1'$ and $\fq'_2$, with  $\fq_1'$ hyperbolic and $\fq'_2$ anisotropic. Then $\deg(\fq'_2)<\deg(\fq_{F'})$ because $\fq_{F'}$ is isotropic. Moreover, the composite field $K':=KF'$ is a generic splitting field of $A_{F'}$ such that $\fq_{K'}$ is hyperbolic, and hence $(\fq'_2)_{K'}$ is hyperbolic. By induction on the degree, we can deduce that $\fq'_2$ is hyperbolic, which implies that $\fq_{F'}=\fq'_1$ is hyperbolic.
But then, by the Springer theorem for hyperbolicity (\cite[Th. 1.14]{BerhuyFringsTignol}), $\fq$ itself must be hyperbolic. This concludes the proof.
\end{proof}

The special case of Theorem \ref{2.16} with $D$ a quaternion algebra was first obtained in \cite[Th. 8.2]{BecherDolphin}.

\begin{remark}\label{2.17}
  In the symplectic and unitary cases, it is very likely that Theorem\;\ref{2.15} remains valid in characteristic 2. As Medhi and Qu\'equiner-Mathieu have recently communicated to us, they believe that the symplectic case can be treated by modifying Tignol's argument in the appendix of \cite{Karpenko10DocMath}, and that the unitary case can be proved using techniques similar to those in Medhi's thesis \cite{Medhi}, thanks to the availability of motivic Steenrod operations in characteristic 2. \hfill $\blacksquare$
\end{remark}

\section{Unramified cohomology and invariants of quadratic forms}

In this section, let $F$ be a field.

\subsection{Unramified cohomology for generic index reduction fields}\label{sec3p1}

Let us briefly recall some basic definitions and facts about unramified cohomology in arbitrary characteristic. For more details of general discussions on unramified cohomology, we refer the reader to \cite{CT95} and \cite{ColliotHooblerKahn97}. Some useful facts about unramified cohomology in positive characteristic have been collected in \cite[\S\,3]{HuSun23}.

\

\newpara\label{3.1} Let $p$ be a prime number. Given integers $r\geqslant 1$ and $i\geqslant 0$, we define
\[
\Z/p^r(i):=\begin{cases}
  \mu_{p^r}^{\otimes i} \quad & \text{ if } \car(F)\neq p,\\
  \nu_r(i)[-i] \quad & \text{ if } \car(F)=p\,,
\end{cases}
\]as an object in the derived category of \'etale sheaves over $F$. Here, if $\car(F)\neq p$, $\mu_{p^r}$ denotes the \'etale sheaf of $p^r$-th roots of unity over $F$; and if $\car(F)=p$,
 let $\nu_r(i)=W_r\Omega^i_{\log}$ denote the $i$-th logarithmic Hodge--Witt sheaf on the big \'etale site of $F$ (\cite{Illusie79}, \cite{Shiho07}).

For every  $n\in\N$, we have the cohomology functor $H^n\bigl(\cdot\,,\,\Z/p^r(i)\bigr):=H^n_{\et}\bigl(\cdot\,,\,\Z/p^r(i)\bigr)$ on $F$-schemes.
The Zariski sheaf associated to the presheaf $U\mapsto H^n\bigl(U,\,\Z/p^r(i)\bigr)$ is denoted by $\sH^n_{p^r}(i)$.

For a smooth connected $F$-variety $X$, we define the \emph{unramified cohomology group}
\[
\rH^n_{nr}\bigl(X,\,\Z/p^r(i)\bigr):=\rH^0_{\textrm{Zar}}\bigl(X,\,\sH^n_{p^r}(i)\bigr)\,.
\]This group can also be described by using the Cousin complex of $X$. It is naturally a subgroup of $\rH^n\bigl(F(X),\,\Z/p^r(i)\bigr)$, where $F(X)$ denotes the function field of $X$.

We will be mostly interested in the case $n=i+1$, and in this case, we can give another description of the unramified cohomology group
$\rH^{i+1}_{nr}\bigl(X,\,\Z/p^r(i)\bigr)$ as follows:

Let $K$ be a field extension of $F$ and let $v$ be a discrete valuation on $K$ that is trivial on $F$. Let $\mathcal{O}_v$ denote the valuation ring of $v$ in $K$. The natural map \[
\rH^{i+1}\big(\mathcal{O}_v,\Z/p^r(i)\big)\longrightarrow \rH^{i+1}\big(K,\Z/p^r(i)\big)
\]is injective, so that we may view $\rH^{i+1}\big(\mathcal{O}_v,\Z/p^r(i)\big)$ as a subgroup of $\rH^{i+1}\big(K,\Z/p^r(i)\big)$. An element of $\rH^{i+1}(K,\Z/p^r(i))$ is called \emph{unramified} at $v$ if it lies in the subgroup $\rH^{i+1}\big(\mathcal{O}_v,\Z/p^r(i)\big)$. If $\widehat{\mathcal{O}}_v$ denotes the completion of $\mathcal{O}_v$ and $K_v$ denotes the fraction field of $\widehat{\mathcal{O}}_v$, then an element of $\rH^{i+1}(K,\Z/p^r(i))$ is unramified at $v$ if and only if its image in  $\rH^{i+1}(K_v,\Z/p^r(i))$ lies in the subgroup $\rH^{i+1}\big(\widehat{\mathcal{O}}_v,\Z/p^r(i)\big)$.

Now suppose that $K=F(X)$ is the function field of a smooth connected $F$-variety $X$. Then each codimension 1 point $x$ in $X$ determines a discrete valuation $v_x$ on the function field $K$, whose valuation ring is the local ring $\mathscr{O}_{x}$ of $X$ at $x$. The unramified cohomology group $\rH^{i+1}_{nr}\big(X,\Z/p^r(i)\big)$ can be identified with the intersection
\[
\bigcap_{x\in X^{(1)}} \rH^{i+1}\big(\mathscr{O}_x,\Z/p^r(i)\big)
\]inside the group $ \rH^{i+1}\big(K,\Z/p^r(i)\big)$, where $x$ runs through the set $X^{(1)}$ of codimension 1 points of $X$. In other words, the unramified cohomology group $ \rH^{i+1}_{nr}\big(X,\Z/p^r(i)\big)$ is the subgroup of $\rH^{i+1}\big(F(X),\Z/p^r(i)\big)$ consisting of elements that are unramified at all codimension 1 points of $X$.

The unramified cohomology groups are stable birational invariants for smooth proper varieties \cite[Thms.\;8.5.1 and 8.6.1]{ColliotHooblerKahn97}. That is, if $X$ and $X'$ are smooth proper connected $F$-varieties such that  for some $m,n\in\N$, the products $X\times_F\mathbb{P}_F^n$ and $X'\times_F\mathbb{P}_F^m$ (with the projective spaces of dimensions $m$ and $n$) are birationally equivalent over $F$,  then there is a natural isomorphism
\[
\rH^{i+1}_{nr}\big(X,\Z/p^r(i)\big)\cong \rH^{i+1}_{nr}\big(X',\Z/p^r(i)\big).
\]

Let us call a field extension $K/F$ \emph{nice} if $K$ is isomorphic to the function field $F(X)$ of some smooth proper connected $F$-variety $X$. When it is the case, the variety $X$ is called a \emph{nice model} of $K/F$. By the birational invariance mentioned above, we can define the \emph{unramified cohomology group} for any nice extension $K/F$ by
\[
\rH^{i+1}_{nr}\big(K/F,\Z/p^r(i)\big):=\rH^{i+1}_{nr}\big(X,\Z/p^r(i)\big),
\]where $X$ is any nice model. \hfill $\blacksquare$

\

\newpara\label{3.2} Fix $i\in\N$. We define $\Q_p/\Z_p(i):=\ilim_r\Z/p^r(i)$ for every prime number $p$ and
\[
\Z/m(i):=\bigoplus^s_{j=1}\Z/p_j^{r_j}(i)
\]for a positive integer $m$ with prime factorization $m=p_1^{r_1}\cdots p_s^{r_s}$. We also put
\[
\Q/\Z(i):=\ilim_m\Z/m(i)=\bigoplus_p\Q_p/\Z_p(i),
\]where the direct sum is taken over all prime numbers. The cohomology functors and the unramified cohomology functors with coefficients in $\Q_p/\Z_p(i)$, $\Z/m(i)$ and $\Q/\Z(i)$ are defined in the natural way.

As a consequence of the Bloch--Kato conjecture, which has been proved by Bloch--Kato \cite{BlochKato86} and Gabber for $p=\car(F)$, and by Voevodsky \cite{Voe11BKconj} for $p\neq\car(F)$, for every $n\geqslant 1$ the natural maps
\[
\rH^n\big(K,\Z/p^r(n-1)\big)\longrightarrow \rH^n\big(K,\Q_p/\Z_p(n-1)\big) \quad
\text{and} \quad \rH^n\big(K,\Z/m(n-1)\big)\longrightarrow \rH^n\big(K,\Q/\Z(n-1)\big)
\]are injective for every field extension $K/F$, and hence the natural maps
\[
\rH^n_{nr}\big(X,\Z/p^r(n-1)\big)\longrightarrow \rH^n_{nr}\big(X,\Q_p/\Z_p(n-1)\big) \quad
\text{and} \quad \rH^n_{nr}\big(X,\Z/m(n-1)\big)\longrightarrow \rH^n_{nr}\big(X,\Q/\Z(n-1)\big)
\]are injective for every smooth connected $F$-variety $X$.

By purity for Brauer groups \cite{Cesnavicius19Duke}, the group $\rH^2_{nr}(X,\Q/\Z(1))$ coincides with the cohomological Brauer group $\Br(X)$ of $X$. \hfill $\blacksquare$

\

\newpara\label{3.3} Let $X$ be an algebraic variety over $F$. Let $\overline{F}$ be a separable closure of $F$ and put $\overline{X}=X\times_F\overline{F}$.
For each $i\in\N$, let $\CH^i(X)$ denote the Chow group of codimension $i$ cycles on $X$. There is a natural homomorphism
\[
\xi^i: \CH^i(X)\longrightarrow \CH^i(\overline{X})^{\mathrm{Gal}(\overline{F}/F)}
\]for each $i\in\N$.

Let us assume that $X=\mathrm{SB}_r(D)$ is the generalized Severi--Brauer variety of right ideals of reduced dimension $r\geqslant 1$ in a central simple algebra $D$ over $F$ (cf. \cite[(1.16)]{KMRT}). Then $X$ is a projective homogeneous variety in the sense of \cite{Merkurjev95AlgiAna} and \cite{Peyre95ProcSymPureMath58}, and $\CH^i(\ov{X})$ is a permutation Galois module (and in particular torsion-free as an abelian group). In this case, the kernel of $\xi^i$ coincides with the torsion subgroup $\CH^i(X)_{\mathrm{tors}}$ of $\CH^i(X)$.

Now consider the special case $r=1$; i.e., $X=\mathrm{SB}(D)$ is the usual Severi--Brauer variety of $D$. We recall the following facts about the Chow groups of $X=\mathrm{SB}(D)$.

\begin{enumerate}
  \item If $\ind(D)$ is not divisible by 8 nor by the square of any odd prime, then $\CH^i(X)$ is torsion-free for every $i\in\N$ (\cite[Cor.\;1.13 and Prop.\;1.15]{Merkurjev95ProcSympos58}).
    \item Suppose $D$ is a division algebra of square-free exponent $e$. Then we have $\CH^2(X)_{\mathrm{tors}}=0$ if for every prime divisor $p$ of $e$, the $p$-primary component of $D$ is decomposable into a tensor of two division algebras of smaller degree (\cite[Prop.\;5.3]{Karpenko98Ktheory}). \hfill $\blacksquare$
\end{enumerate}

In the next two theorems we gather together some known results that will be useful in this paper.

\begin{thm}\label{3.4}
  Let $D$ be a  central simple algebra over $F$ and let $X=\mathrm{SB}_r(D)$ be the generalized Severi--Brauer variety of right ideals of reduced dimension $r\geqslant 1$ in $D$. For each $n\geqslant 1$, consider the natural map
  \[
  \eta^n: \rH^n\big(F,\Q/\Z(n-1)\big)\longrightarrow \rH^n_{nr}\big(X,\,\Q/\Z(n-1)\big).
  \]
\begin{enumerate}
    \item For $n=1$, the map $\eta^1$ is an isomorphism.
    \item For $n=2$, the map $\eta^2$ is surjective and $\mathrm{Ker}(\eta^2)$ coincides with the subgroup generated by $r.(D)$, where $(D)\in \Br(F)=\rH^2(F,\Q/\Z(1))$ denotes the Brauer class of $D$.
    \item For $n=3$, there is an exact sequence
      \[
      F^*\xrightarrow{\cup\, r.(D)} \mathrm{Ker}(\eta^3)\longrightarrow \mathrm{CH}^2(X)_{\mathrm{tors}}\longrightarrow 0.
      \]
       \item Suppose that $r=1$ (i.e., $X=\mathrm{SB}(D)$ is the usual Severi--Brauer variety of $D$).

         If $\ind(D)=2$, then for every $n\geqslant 3$ we have
      \[
      \Ker(\eta^n)=\rH^{n-2}(F,\Q/\Z(n-2))\cup (D)\quad\text{ and } \quad \coker(\eta^n)=0.
      \]If $\ind(D)\,|\,4$ or $D$ is a decomposable division algebra of exponent $2$, then $\Ker(\eta^3)=F^*\cup (D)$.
       \item Suppose that $r=2$.

      If $\ind(D)\,|\,2$, then $\eta^n$ is an isomorphism for all $n\geqslant 1$.

     If $\exp(D)=2$ and $\ind(D)=4$, then $\eta^3$ is injective and $\coker(\eta^3)\cong \Z/2$.
\end{enumerate}
\end{thm}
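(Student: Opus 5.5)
Theorem~\ref{3.4} is a compilation of known results, so the plan is to reduce each item to the literature in characteristic $\neq p$, and then check that the same arguments work for the $p$-primary part when $p=\car(F)$, using the facts on unramified cohomology in positive characteristic collected in \cite[\S\,3]{HuSun23}. The main tool is the Bloch--Ogus/Gersten (Cousin) spectral sequence for $X$ with coefficients $\Q/\Z(n-1)$. In low degrees it yields an exact sequence relating $\rH^n(F)$, $\rH^n_{nr}(X)$ and the Chow groups of $X$. Together with the Hochschild--Serre spectral sequence for $\ov{X}\to X$, it gives the description of $\Ker(\eta^n)$ and $\coker(\eta^n)$ that Peyre and Merkurjev obtained for projective homogeneous varieties \cite{Merkurjev95AlgiAna}, \cite{Peyre95ProcSymPureMath58}. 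Since $\CH^i(\ov X)$ is a permutation module (see \eqref{3.3}), the relevant Galois cohomology of $\Pic(\ov X)$ and $\CH^2(\ov X)$ is controlled.

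For (1), I will note that $\ov X$ is geometrically rational and $\rH^1$ is unramified-invariant on it. Concretely, $\rH^1_{nr}(X)$ consists of étale cyclic covers of $X$, and since $\ov X$ is simply connected these all come from $F$. Item (2) is the classical computation of $\Br(X)$ for generalized Severi--Brauer varieties: by purity (\eqref{3.2}), $\Br(X)=\rH^2_{nr}$. The exact sequence $0\to\Pic X\to\Pic(\ov X)^{G}\to\Br F\to\Br X\to \rH^1(F,\Pic\ov X)=0$ then shows surjectivity. The kernel is the image of the generator of $\Pic(\ov X)^G=\Z$, namely the Plücker class, whose obstruction is $r.(D)$ (Blanchet/Merkurjev--Tignol). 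For (3), I will use Peyre's exact sequence for degree-3 unramified cohomology of projective homogeneous varieties. Its torsion part is $\CH^2(X)_{\mathrm{tors}}$, and the contribution from $\Pic$ is $F^*\cup r.(D)$. In characteristic $p$, the $p$-part will follow from Kato/Gros--Suwa purity for logarithmic Hodge--Witt sheaves, via the references in \cite{HuSun23}.

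For (4) with $\ind(D)=2$, $X$ is birational to a conic times a projective space. Hence, by stable birational invariance (\eqref{3.1}), it suffices to treat a conic. There the results of Arason and Kahn--Rost--Sujatha (and their characteristic-$2$ analogue in \cite{HuSun23}) give $\Ker=\rH^{n-2}\cup(D)$ and surjectivity. For the index-$4$ and decomposable exponent-$2$ statement, I will combine (3) with \eqref{3.3}: Merkurjev's torsion-freeness for $\ind|4$ and Karpenko's $\CH^2_{\mathrm{tors}}=0$ for decomposable algebras. For (5), when $\ind(D)|2$, $\SB_2(D)$ has a rational point and is rational, so $\eta^n$ is an isomorphism. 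For exponent $2$ and index $4$, $\SB_2(D)$ is stably birational to the Albert quadric (a $6$-dimensional quadric in $\SB_2$ of a biquaternion algebra). The result then follows from the computation of degree-$3$ unramified cohomology of quadrics (Kahn--Rost--Sujatha in characteristic $\neq2$, and its characteristic-$2$ version): injectivity for the Albert form, and a cokernel $\Z/2$ generated by the class lifting the Arason-type invariant.

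The main obstacle is the uniform treatment of $p=\car(F)$ in items (3)--(5). There, étale purity is replaced by Gersten conjecture results for $\nu_r(i)$, and the quadric computations in characteristic $2$ must be drawn carefully from \cite{HuSun23}. I would cite those rather than reprove them.
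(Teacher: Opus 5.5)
Your outline follows the paper item by item. That means purity plus Hochschild--Serre with $\rH^1(F,\Pic\ov X)=0$ and Merkurjev--Tignol for (2), and Merkurjev--Peyre for (3). It also means vanishing of $\CH^2(X)_{\mathrm{tors}}$ via \eqref{3.3} for the last assertion of (4), reduction to a conic when $\ind(D)=2$, and reduction to the Albert quadric in (5). However, one step rests on a citation that does not cover it.

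For $\ind(D)=2$ and $\car(F)=2$, \cite{HuSun23} gives the kernel $\rH^{n-2}(F,\Q/\Z(n-2))\cup(D)$ for all $n$. The vanishing of $\coker(\eta^n)$, though, is proved there only for $n=3$. For $n\geqslant 4$, surjectivity of $\eta^n$ on $2$-primary parts in characteristic $2$ is not in the literature. It is the one substantive point of Theorem~\ref{3.4} that the paper argues rather than quotes.

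The missing argument adapts \cite[Prop.\;A.1]{KahnRostSujatha98}. Divisible-coefficient versions of the sequences (5.4.1) and (5.4.2) of \cite{HuSun23} give an exact sequence
\[
\rH^1_{\mathrm{Zar}}\big(X,\sH^{n-1}(n-1)\big)\xrightarrow{\;\phi\;}\rH^{n-2}\big(F,\Q/\Z(n-2)\big)\lra\coker(\eta^n)\lra 0,
\]
where the middle term arises as $\rH^0\big(F,\rH^1_{\mathrm{Zar}}(\ov X,\sH^{n-1}(n-1))\big)$. So you must show $\phi$ is onto. Precompose $\phi$ with the cup product of classes coming from $F$ with $\CH^1(X)$. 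Since the conic has no rational point, $\CH^1(X)=2\Z$ inside $\CH^1(\ov X)=\Z$, so the composite is multiplication by $2$ on $\rH^{n-2}(F,\Q/\Z(n-2))$. That group is divisible (by Bloch--Kato it is $K^M_{n-2}(F)\otimes\Q/\Z$), hence $\phi$ is surjective and $\coker(\eta^n)=0$.

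There are also two smaller points. In (2), the Hochschild--Serre sequence only computes $\Ker(\Br X\to\Br\ov X)$. You also need $\Br(\ov X)=0$, which the paper gets from birational invariance and the rationality of $\ov X$ over the separable closure. In (5), $\SB_2$ of a biquaternion algebra is the $4$-dimensional quadric defined by the $6$-dimensional Albert form; the paper uses \cite[Thm.\;2]{Kahn95ArchMath} and \cite[Thm.\;1.2]{HuSun23} there. For $\ind(D)\mid 2$, your argument (a rational point makes the projective homogeneous variety rational) is a valid alternative. The paper instead reduces, via \cite[Prop.\;3]{Blankchet91}, to the case $\SB_2(D)=\mathrm{Spec}(F)$.
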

\begin{proof}
  (1) This is a well known fact which is valid for all smooth proper geometrically rational varieties. See e.g. \cite[Prop.\;3.4]{HuSun23}.

  (2) As we have said in \eqref{3.2}, $\rH^2_{nr}(X,\Q/\Z(1))=\Br(X)$. We have
  \[
\Br(\ov{X})= \rH^2_{nr}\big(\ov{X},\Q/\Z(1)\big)\cong \rH^2(\ov{F},\Q/\Z(1))=0
  \]by \cite[Thm.\;8.6.1]{ColliotHooblerKahn97}, and since $\Pic(\ov{X})$ is a permutation Galois module, $\rH^1(F,\Pic(\ov{X}))=0$. So the surjectivity of $\eta^2$ follows from the exact sequence (5.21) in \cite[Prop.\;5.4.2]{ColliotSko21book}. The statement about $\Ker(\eta^2)$ is proved in \cite[Cor.\;2.7]{MerkurjevTignol95crelle}. (The case $r=1$ is a well known theorem due to Amitsur \cite[Thm.\;5.4.1]{GilleSzamuely17}).

  (3) This is  proved in \cite[\S\,3, Theorem]{Merkurjev95AlgiAna} and \cite[Thm.\;2.1]{Peyre98Ktheory}.

  (4) The stable birational class of $X$ depends only on the Brauer class of $D$ (\cite[Remark\;5.4.3]{GilleSzamuely17}), and the unramified cohomology group is a stable birational invariant (\cite[Thms.\;8.5.1 and 8.6.1]{ColliotHooblerKahn97}). So we may assume that $D$ is a division algebra.

  If $\ind(D)\,|\,4$ or $D$ is decomposable of exponent 2, then, as we have mentioned in \eqref{3.3}, $\CH^2(X)_{\mathrm{tors}}=0$. So the description of $\mathrm{Ker}(\eta^3)$ follows from the exact sequence in (3).

  Now suppose that  $\ind(D)=2$. Then $X$ is a smooth plane conic.

  If $\car(F)\neq 2$, the result has been noticed in \cite[Prop.\;7]{Berhuy07ArchMath}. If $\car(F)=2$, the description of $\Ker(\eta^n)$ follows from \cite[Lemma\;4.3 and Prop.\;5.1]{HuSun23}. The surjectivity of $\eta^n$ can be proved similarly to the proof of \cite[Prop.\;A.1]{KahnRostSujatha98}. (The case $n=3$ has been treated in \cite{HuSun23}.)

  Indeed, using (divisible coefficients analogs of) the exact sequences (5.4.1) and (5.4.2) in \cite{HuSun23}, we can get an exact sequence
  \[
  \begin{split}
   \rH^1_{\textrm{Zar}}\big(X,\sH^{n-1}(n-1)\big)&\overset{\phi}{\lra} \rH^0\left(F,\rH^1_{\textrm{Zar}}\big(\ov{X},\sH^{n-1}(n-1)\big)\right)=\rH^{n-2}\big(F,\Q/\Z(n-2)\big)\\
   &\lra \coker(\eta^n)\lra 0.
  \end{split}
  \]It is thus sufficient to show that the map $\phi$ is surjective. By composing $\phi$ with the cup product (cf. \cite[(3.3.2)]{HuSun23})
  \[
  \rH^{n-2}_{nr}(X,\Q/\Z(n-2))\otimes \rH^1_{\textrm{Zar}}\big(X,\sH^1(1)\big)\lra \rH^1_{\textrm{Zar}}\big(X,\sH^{n-1}(n-1)\big)
  \]and the natural map
  \[
 \rH^{n-2}(F,\Q/\Z(n-2))\otimes \rH^1_{\textrm{Zar}}\big(X,\sH^1(1)\big)\lra \rH^{n-2}_{nr}(X,\Q/\Z(n-2))\otimes \rH^1_{\textrm{Zar}}\big(X,\sH^1(1)\big)
  \]we obtain a natural map
  \[
  \phi':  \rH^{n-2}(F,\Q/\Z(n-2))\otimes \rH^1_{\textrm{Zar}}\big(X,\sH^1(1)\big)\lra \rH^{n-2}\big(F,\Q/\Z(n-2)\big).
  \]Here $\rH^1_{\textrm{Zar}}\big(X,\sH^1(1)\big)=\CH^1(X)\otimes(\Q/\Z)$, and $\CH^1(X)=2\Z$. So the map $\phi'$ can be identified with the multiplication by 2 on the divisible group
  $\rH^{n-2}\big(F,\Q/\Z(n-2)\big)$. Hence $\phi'$ is surjective, and it follows that $\phi$ is surjective, as desired.

(5)  By \cite[Prop.\;3]{Blankchet91}, the stable birational equivalence class of $X=\SB_2(D)$ depends only on the Brauer class of $D$. So we may assume that $D$ is a division algebra.

If $\ind(D)=2$, then we have $X=\SB_2(D)=\mathrm{Spec}(F)$, so that the assertion is obvious.

If $\exp(D)=2$ and $\ind(D)=4$, we may assume that $D$ is a biquaternion division algebra. In this case $X=\SB_2(D)$ is isomorphic to the projective quadric defined by the Albert form of $D$. So the result follows from \cite[Thm.\;2]{Kahn95ArchMath} and \cite[Thm.\;1.2]{HuSun23}.
\end{proof}


\begin{thm}\label{3.5}
  Let $F'/F$ be a finite separable extension and let $D'$ be a central simple algebra over $F'$. Let $X=\mathrm{R}_{F'/F}\SB(D')$ be the Weil restriction of the Severi--Brauer $F'$-variety of $D'$. For each $n\geqslant 1$, consider the natural map
  \[
  \eta^n: \rH^n\big(F,\Q/\Z(n-1)\big)\longrightarrow \rH^n_{nr}\big(X,\,\Q/\Z(n-1)\big).
  \]
\begin{enumerate}
    \item For $n=1$, the map $\eta^1$ is an isomorphism.

    \item For $n=2$, the map $\eta^2$ is surjective and $\mathrm{Ker}(\eta^2)$ coincides with the subgroup generated by the Brauer class $\Cor_{F'/F}(D')\in\Br(F)$, where $\Cor_{F'/F}$ denotes the corestriction map.
    \item For $n=3$, there is an exact sequence
      \[
      (F')^*\xrightarrow{\Cor_{F'/F}(\cdot\,\cup (D'))} \mathrm{Ker}(\eta^3)\longrightarrow \mathrm{CH}^2(X)_{\mathrm{tors}}\longrightarrow 0.
      \]
    \item If $\ind(D')\,|\,2$, then $\mathrm{Ker}(\eta^3)=\Cor_{F'/F}((F')^*\cup(D'))$.
\end{enumerate}
\end{thm}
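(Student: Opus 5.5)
We prove Theorem \ref{3.5} by the same strategy as Theorem \ref{3.4}: each item is either a general fact about projective homogeneous varieties or an instance of the Merkurjev--Peyre theory. For (1), $X=\mathrm{R}_{F'/F}\SB(D')$ is smooth, proper and geometrically rational, since over $\ov{F}$ it becomes a product of projective spaces. So $\eta^1$ is an isomorphism, exactly as in Theorem \ref{3.4}(1); see \cite[Prop.\;3.4]{HuSun23}.

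For (2) we use $\rH^2_{nr}(X,\Q/\Z(1))=\Br(X)$ from \eqref{3.2}. Since $\Br(\ov{X})=0$, the exact sequence \cite[(5.21)]{ColliotSko21book} gives the surjectivity of $\eta^2$, provided $\rH^1(F,\Pic(\ov{X}))=0$. Now $\Pic(\ov{X})\cong\Z^{[F':F]}$ is the permutation module $\mathrm{Ind}_{F'}^{F}\Z$, so by Shapiro's lemma $\rH^1(F,\Pic(\ov{X}))=\rH^1(F',\Z)=0$. The same sequence identifies $\Ker(\eta^2)$ with the image of the map $\Pic(\ov{X})^{\Gamma}\to\Br(F)$. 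The generator of $\Pic(\ov{X})^{\Gamma}=\Z$ is the corestricted hyperplane class, whose obstruction class is $\Cor_{F'/F}(D')$, since the class of $\mathcal{O}(1)$ on $\SB(D')$ obstructs to $(D')$ and both the Weil restriction and the obstruction map are compatible with corestriction. This is the usual Merkurjev--Tignol computation for Weil restrictions.

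For (3), $X$ is projective homogeneous under $\mathrm{R}_{F'/F}\bfGL_1(D')$. Peyre's and Merkurjev's general theorem \cite{Peyre98Ktheory}, \cite{Merkurjev95AlgiAna} gives an exact sequence
\[
\Big(\Pic(\ov{X})\otimes\ov{F}^*\Big)^{\Gamma}\lra \Ker(\eta^3)\lra \CH^2(X)_{\mathrm{tors}}\lra 0 .
\]
The first map is the cup product with the Tits/Brauer classes. By Shapiro's lemma, $(\mathrm{Ind}_{F'}^{F}\Z\otimes\ov{F}^*)^{\Gamma}=(F')^*$, and the map becomes $a\mapsto\Cor_{F'/F}(a\cup(D'))$. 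This part is bookkeeping: we must check that the torsion of $\CH^2$ is detected by $\xi^2$, which holds because $\CH^2(\ov{X})$ is a permutation module.

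For (4), it then suffices to prove $\CH^2(X)_{\mathrm{tors}}=0$ when $\ind(D')\,|\,2$. This is the main obstacle. If $D'$ is split, $X$ is rational and there is nothing to prove, so assume $\ind(D')=2$. Over $F'$, $X_{F'}$ is a product of conjugate conics. I would try to compute $\CH^*(X)$ using Karpenko's results on Chow groups of Weil transfers of conics, or via the motivic decomposition of $\mathrm{R}_{F'/F}$ of a conic, which is built from upper motives and Tate twists of $\mathrm{Spec}$ of étale algebras. Alternatively, I would use the K-theory computation $K_0(X)=K_0\big(\mathrm{R}_{F'/F}$ of quaternion algebras$\big)$ (Quillen--Panin), which is torsion-free. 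Combined with the fact that the gamma filtration agrees with the topological filtration in codimension $\leqslant 2$, this gives $\CH^2(X)_{\mathrm{tors}}=0$. I expect verifying that last step, namely showing that torsion-freeness of the relevant graded piece of $K_0(X)$ yields $\CH^2(X)_{\mathrm{tors}}=0$, to require the most care. If that fails, I would fall back on the literature (e.g. Karpenko's and Merkurjev's work on Chow groups of Weil transfers) for the vanishing.
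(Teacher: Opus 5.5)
Your arguments for (1)--(3) are the paper's. They use the same inputs: \cite[Prop.\;3.4]{HuSun23}, the sequence \cite[(5.21)]{ColliotSko21book} with $\Pic(\ov{X})$ a permutation module, the Merkurjev--Tignol corestriction computation \cite[Cor.\;2.12]{MerkurjevTignol95crelle}, and the Merkurjev--Peyre sequence \cite{Merkurjev95AlgiAna}, \cite{Peyre98Ktheory}. Your Shapiro-lemma bookkeeping is fine.

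The gap is in (4): you never actually prove $\CH^2(X)_{\mathrm{tors}}=0$.

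\emph{The K-theoretic inference fails.} Torsion-freeness of $K_0(X)$ together with $\gamma^2=\tau^2$ says nothing about torsion in $\mathrm{gr}^2_\tau K_0(X)\cong\CH^2(X)$. Severi--Brauer varieties have both properties, yet their $\CH^2$ can have torsion; this is why \eqref{3.3} needs hypotheses on the index. The step you single out as delicate is in fact formal, since in codimension $2$ the map $\CH^2(X)\to\mathrm{gr}^2_\tau K_0(X)$ is an isomorphism. What is missing is an actual computation of that graded piece.

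\emph{A missing reduction.} You tacitly assume $\SB(D')$ is a conic. You should first reduce to $D'$ quaternion, using that $\Ker(\eta^3)$ and $\Cor_{F'/F}((F')^*\cup(D'))$ depend only on the Brauer class of $D'$. This is stable birational invariance, as in the proof of Thm.\;\ref{3.4}(4).

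\emph{The missing idea.} The paper settles (4) by dimension. For $F'/F$ quadratic and $D'$ quaternion, $X$ is a projective homogeneous surface, so $\CH^2(X)=\CH_0(X)$. Torsion-freeness then amounts to injectivity of the degree map on zero-cycles, which is \cite[Prop.\;4.4]{ChernousovMerkurjev06ComposMath}, and (4) follows from (3).

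This shortcut uses $[F':F]=2$, which is the only case used later in the paper. For $[F':F]>2$ one has $\dim X=[F':F]$, so the argument no longer applies. Your general-degree plan would then need a genuine computation of $\CH^2$ of a Weil transfer of a conic, which you have not provided.
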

\begin{proof}
   The statement about $\Ker(\eta^2)$ follows from \cite[Cor.\;2.12]{MerkurjevTignol95crelle}.   The other statements in (1)--(3) hold for the same reasons  (by the same references) as in the corresponding statements in Thm.\;\ref{3.4}.

   If $D'$ is a quaternion algebra over $F'$, then $X$ is a 2-dimensional projective homogeneous variety. In this case, the degree map $\CH^2(X)=\CH_0(X)\to\Z$ is injective by \cite[Prop.\;4.4]{ChernousovMerkurjev06ComposMath}, so $\CH^2(X)_{\mathrm{tors}}=0$. Thus (4) follows from the exact sequence in (3).
\end{proof}

\newpara\label{3.6} Let $D$ be a central simple algebra over $F$. The function fields $F_+(D)=F(\SB(D))$ and $F_-(D)=F(\SB_2(D))$ in Definition\;\ref{2.14} are generic index reduction fields of orthogonal and symplectic types respectively. They are also nice extensions of $F$ in the sense defined in \eqref{3.1}. So we can define for each $n\geqslant 1$ the unramified cohomology groups
\[
\rH^n_+(D):=\rH^n_{nr}\big(F_+(D)/F,\,\Z/2(n-1)\big)\quad\text{and}\quad \rH^n_-(D):=\rH^n_{nr}\big(F_-(D)/F,\,\Z/2(n-1)\big).
\]For any generic index reduction field of orthogonal type $K_+$ (resp. of symplectic type $K_-$) for the Brauer class $(D)\in\Br(F)$ that is a nice extension of $F$, we have natural isomorphisms
\[
\rH^n_+(D)\cong\rH^n_{nr}\big(K_+/F,\,\Z/2(n-1)\big)\quad\text{and}\quad \rH^n_-(D)\cong\rH^n_{nr}\big(K_-/F,\,\Z/2(n-1)\big),
\]because the nice models of $K_+$ (resp. of $K_-$) are all stably birational to $\SB(D)$ (resp. $\SB_2(D)$). In particular, $\rH^n_+(D)$ and $\rH^n_-(D)$ depend only on the Brauer class of $D$.

Similarly, for a central simple algebra $D'$ over a quadratic separable extension $F'$ of $F$, we can define
\[
\rH^n(D'/F):=\rH^n_{nr}\big(K/F,\,\Z/2(n-1)\big),
\]where $K$ is any nice extension of $F$ that is a generic index reduction field of $F$-unitary type for $(D')$ (e.g., $K=F(D')$ as in Definition\;\ref{2.14}).

We also define the group $\rH^n(D'/F)$ when $D'\cong D\times D^{op}$ for some central simple $F$-algebra $D$, where $D^{op}$ denotes the opposite algebra of $D$. In this case, we put
\[
\rH^n(D'/F):=\rH^n_{nr}\big(K/F,\,\Z/2(n-1)\big),
\]where $K$ is any nice extension of $F$ that is a generic splitting field of $(D)$. \hfill $\blacksquare$

\subsection{Cohomological invariants of quadratic forms}

Unless otherwise stated, we follow \cite{EKM08} for terminology and notation about quadratic forms. For the reader's convenience,
we recall some terminology and facts that will be used in this paper.

\

\newpara\label{3.7} For a quadratic form $(V,q)$ over $F$, let $b_q$ denote the bilinear form associated to $q$ given by
$b_q(u,v)=q(u+v)-q(u)-q(v)$. Put
\[
\rad(b_q)=\{v\in V\,|\,b_q(u,v)=0 \text{ for all } u\in V\}.
\]Following the terminology of \cite{KMRT} and \cite{EKM08}, we say that $b$ is nondegenerate if either $\rad(b_q)=0$ or $\rad(b_q)$ is 1-dimensional generated by an anisotropic vector for $q$.

We write $\rI_q(F)$ or $\rI_q^1(F)$ for the Witt group of even-dimensional nondegenerate quadratic forms over $F$. For $n\geqslant 2$, let $\rI^n_q(F)$ denote the subgroup of $\rI_q(F)$ generated by the quadratic $n$-fold Pfister forms over $F$. For a quadratic form $\varphi$ over $F$, we will write $\varphi\in \rI^n_q(F)$ if $\varphi$ is nondegenerate, of even dimension, and its Witt class lies in $\rI^n_q(F)$. \hfill $\blacksquare$

\

\newpara\label{3.8} Let us introduce the shorthand notation
\[
\rH^n(\cdot)=\rH^n\big(\cdot,\Z/2(n-1)\big)
\] for every $n\geqslant 1$. The group $\rH^1(F)$ classifies quadratic \'etale $F$-algebras. Any such algebra can be represented by $F_c$, where
\[
F_c=F[T]/(T^2-c) \quad \text{ with } c\in F^* \text{ if } \car(F)\neq 2
\]and
\[
F_c=F[T]/(T^2-T-c) \quad \text{ with } c\in F \text{ if } \car(F)=2.
\]The cohomology class in $\rH^1(F)$ corresponding to the quadratic algebra $F_c/F$ will be denoted by $(c\,]$. The binary quadratic form given by the norm of $F_c/F$ will be denoted by $\langle\!\langle c\,]\!]$.

The map
\[
e_1:\; \rI^1_q(F)\longrightarrow \rH^1(F)\,;\quad \langle\!\langle a]\!]\longmapsto (a\,]
\]is a well defined homomorphism,  often called the \emph{discriminant}. (In characteristic 2 it is also called the \emph{Arf invariant}.) It is well known that $e_1$ is surjective with $\mathrm{Ker}(e_1)=\rI^2_q(F)$.

For $n\geqslant 2$, as explained in \cite[\S\,16]{EKM08}, one can also define a functorial homomorphism
\[
e_n:\;\rI^n_q(F)\longrightarrow \rH^n(F),
\]which is surjective with $\mathrm{Ker}(e_n)=\rI^{n+1}_q(F)$. Using the same notation as in \cite{EKM08}, one can characterize $e_n$ by the formula
\[
e_n\bigl(\langle\!\langle a_1,\cdots, a_{n-1}\,;\,a_n]\!]\bigr)=(a_1)\cup \cdots\cup (a_{n-1})\cup (a_n\,]\,,
\]where for any $a\in F^*$, $(a)$ denotes its canonical image in $F^*/F^{*2}$. (The maps $e_2$ and $e_3$ are more classical, called the \emph{Clifford invariant} and the \emph{Arason invariant} respectively.)

Note that for all $n\geqslant 1$, we have
\[
  e_n(\varphi)=e_n(c.\varphi)\quad\text{for all } \varphi\in \rI^n_q(F)\quad\text{and all}\quad c\in F^*,
\]
because $\varphi-c\varphi=\langle\!\langle c\rangle\!\rangle\varphi\in \rI^{n+1}_q(F)=\mathrm{Ker}(e_n)$. \hfill $\blacksquare$

\

\newpara\label{3.9} As a consequence of the Hauptsatz, it is well known that $\bigcap_{n\geqslant 1}\rI^n_q(F)=0$ (\cite[Cor.\;23.8]{EKM08}). Therefore, an even dimensional nondegenerate quadratic form $q$ over $F$ is hyperbolic if and only if its Witt class lies in $\rI^n_q(F)$ for all $n\geqslant 1$, or equivalently, if and only if $e_n(q)=0$ for all $n\geqslant 1$.

Let $q$ and $q'$ be nondegenerate quadratic forms of the same dimension over $F$. We know that $q$ and $q'$ are isomorphic if and only if they are Witt equivalent. If $\car(F)\neq 2$ or their common dimension is even, then the orthogonal sum $q\perp -q'$ is nondegenerate, so that $e_1(q\perp -q')$ can be defined and whenever $e_n(q\perp -q')=0$, the next invariant $e_{n+1}(q\perp -q')$ is well defined. Therefore, when $\car(F)\neq 2$ or the common dimension $\dim q=\dim q'$ is even, we can use the $e_n$-invariants of $q\perp -q'$ for all $n\geqslant 1$ to determine whether $q$ and $q'$ are isomorphic. \hfill $\blacksquare$

\section{Cohomological invariants in symplectic and unitary cases}

\subsection{Symplectic case}

\newpara\label{4.1} Let $(Q,\gamma)$ be a quaternion algebra with its canonical (symplectic) involution over a field $K$. We have
 \[
 \Symd(Q,\gamma)=\{a+\gamma(a)\,|\,a\in Q\}=K.
 \]Let $(V,h)$ be an even hermitian form over $(Q,\gamma)$. Then the \emph{Jacobson trace form} of $h$ is the quadratic form $q_h$ defined on the underlying $K$-vector space of $V$ by the rule $q_h(x)=h(x,x)$. Note that
 \[
 \dim_K(q_h)=\dim_K(V)=\deg(Q)\ind(Q)\rk(h)=2\,\ind(Q)\rk(h).
 \]

 A standard fact is that the hermitian form $h$ is hyperbolic if and only if the quadratic form $q_h$ is hyperbolic, and that the isomorphism class of $q_h$ determines that of $h$ (\cite[Chap.\;10, Thm.\;1.7]{Schar}). \hfill $\blacksquare$

\

\newpara\label{4.2} Let $D$ be a central simple algebra over a field $F$ and let $\theta$ be a symplectic involution on $D$. (The existence of a symplectic involution on $D$ implies that $\deg(D)$ is even and that $\exp(D)\leqslant2$.) Let $K$ be a nice extension of $F$ that is a generic index reduction field of sympletic type of $D$, e.g., $K=F_-(D)$ as in Definition\;\ref{2.14}. Then $D_K$ is Brauer equivalent to a quaternion algebra $Q$ over $K$, and we may assume $D_K=\End_Q(W)$ for a finitely generated right $Q$-module $W$. Let $\gamma$ be the canonical involution on $Q$. By \eqref{2.5}, there is a nondegenerate hermitian form $f$ on $W$ (with respect to $\gamma$) such that $\theta_K=\ad_{f/\gamma}$. The form $f$ is uniquely determined up to a scalar multiple from $K^*$. We fix such a form $f$.

Let $(V,h)$ be a nondegenerate even hermitian form over $(D,\theta)$. To the hermitian form $(V_K,h_K)$ over $D_K$ obtained by base extension, the Morita functor (cf. \eqref{2.4}) associates a nondegenerate even hermitian form $h^{\fm}_K=f\star h_K$ over $(Q,\gamma)$. Let $q^{\fm}_h$ denote the Jacobson trace form of $h^{\fm}_K$. It is uniquely determined by $h$ itself once $f$ is fixed, and a different choice of $f$ modifies $h$ only by a scalar multiple in $K^*$.

Recall from \eqref{2.3} that the rank of a hermitian form is the semisimple rank of the underlying module. The Morita functor does not alter the rank of hermitian forms. So we have
\[
\rk(h^{\fm}_K)=\rk(h_K)=\frac{\ind(D)}{\ind(D_K)}\rk(h)
\]and hence
\[
\dim_K(q^{\fm}_h)=2\,\ind(Q)\rk(h^{\fm}_K)=2\,\ind(Q)\frac{\ind(D)}{\ind(D_K)}\rk(h)=2\,\ind(D)\rk(h).
\]In particular, the quadratic form $q_h^{\fm}$ is always even dimensional.

For every integer $n\geqslant 1$, we define the $e_n$-\emph{invariant} of $h$ by
\[
e_n(h):=e_n(q^{\fm}_h)\;\in\; \rH^n(K).
\]Here for $n\geqslant 2$, the invariant $e_n(h)$ is defined only when $e_{n-1}(h)=0$. Since the $e_n$-invariants of quadratic forms are insensitive to scalar multiples, these invariants of $h$ depend only on the involution $\theta$ and are independent of the choice of $f$.

In the above discussions the algebras $D/F$ and $Q/K$ are not required to be division algebras.  Since the Morita functor commutes with extensions of base fields, this allows us to check easily that the above $e_n$-invariants are functorial in the base field. That is, for any field extension  $L/F$ we have
\[
\mathrm{Res}_{LK/K}(e_3(h))=e_3(h_L)\in \rH^3(LK),
\]where $LK$ denote the free composite of $L$ and $K$, and
\[
\mathrm{Res}_{LK/K}:\; \rH^3(K)\lra  \rH^3(LK)
\]denotes the natural restriction map. \hfill $\blacksquare$

\begin{lemma}\label{4.3}
  With notation and hypotheses as in $\eqref{4.2}$, we have
  \[
e_n(h)\in \rH^n_-(D)=\rH^n_{nr}\big(K/F,\,\Z/2(n-1)\big) \quad \text{ for every }\; n\geqslant 1.
  \]
\end{lemma}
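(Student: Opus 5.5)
We have to show that $e_n(h)\in\rH^n(K)$ is unramified at every discrete valuation of $K$ that is trivial on $F$, or at least at every codimension 1 point of a nice model $X$ of $K/F$. Since $\rH^n_-(D)$ does not depend on the choice of $K$ (\eqref{3.6}), we may take $X=\SB_2(D)$ and $K=F_-(D)$.

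Fix a codimension 1 point $x\in X^{(1)}$, with valuation $v=v_x$, valuation ring $\mathscr{O}_x$ and completion $K_v$. By the description in \eqref{3.1}, it suffices to show that the image of $e_n(h)$ in $\rH^n(K_v)$ lies in $\rH^n(\widehat{\mathscr{O}}_v)$. The key point is that the quaternion algebra $Q$ and the form $f$ can be chosen over the valuation ring. The residue field $\kappa(x)$ contains $F$, and $\mathscr{O}_x$ is the local ring of $X$ at $x$. Since $X=\SB_2(D)$ is smooth, $\mathscr{O}_x$ is regular, and the generic point gives a right ideal of reduced dimension $2$ in $D_K$. This ideal extends to $\mathscr{O}_x$ because $X$ is proper (valuative criterion): the point $\mathrm{Spec}\,K\to X$ extends uniquely to $\mathrm{Spec}\,\mathscr{O}_x\to X$. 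We therefore obtain a locally free right ideal $I\subset D_{\mathscr{O}_x}$ of reduced rank $2$ that is a direct summand. Then $Q_{\mathscr{O}}:=\End_{D_{\mathscr{O}}}(I)$ is an Azumaya quaternion algebra over $\mathscr{O}_x$ with $Q_{\mathscr{O}}\otimes K$ Brauer equivalent to $D_K$, and $D_{\mathscr{O}}\cong\End_{Q_{\mathscr{O}}}(W_{\mathscr{O}})$. Since $e_n(h)$ is independent of the choice of $(Q,W,f)$, we may use this integral model. (Alternatively, one can just use that $K_v$ splits $D$ up to index $2$ and argue directly over $\widehat{\mathscr{O}}_v$, where by Hensel/Azumaya lifting the residue algebra lifts.)

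Next, $\theta$ restricts to an involution $\theta_{\mathscr{O}}$ on $D_{\mathscr{O}}=D\otimes_F\mathscr{O}_x$. We need an $\mathscr{O}$-unimodular hermitian form $f_{\mathscr{O}}$ on $W_{\mathscr{O}}$ over $(Q_{\mathscr{O}},\gamma)$ with $\ad_{f}=\theta$. I would pass to the complete ring $\widehat{\mathscr{O}}_v$, a complete DVR. There the form $f$ is determined up to scalars in $K_v^*$. Choose an $\widehat{\mathscr{O}}$-lattice form with adjoint $\theta_{\widehat{\mathscr{O}}}$; it exists by the integral analogue of \eqref{2.5}, which should hold over a local ring since Azumaya algebras with involution over local rings behave like fields. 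Rescale it by a power of the uniformizer so that it is unimodular. This is possible because the adjoint involution is defined integrally, so the Gram matrix is integral up to a unit scalar. Similarly, $h$ is defined over $F\subset\widehat{\mathscr{O}}$ and is unimodular over $D_{\widehat{\mathscr{O}}}$, so $f\star h$ is a unimodular even hermitian form over $Q_{\widehat{\mathscr{O}}}$. Its Jacobson trace form $q^{\fm}_h$ is therefore a nondegenerate quadratic form defined over $\widehat{\mathscr{O}}_v$, up to a scalar in $K_v^*$, which does not affect $e_n$ by \eqref{3.8}.

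It remains to show that $e_n$ of a quadratic form over $K_v$ that is the base change of a nondegenerate form $q_0$ over $\widehat{\mathscr{O}}_v$ lies in $\rH^n(\widehat{\mathscr{O}}_v)$, given that $e_{n-1}(q)=0$. Since $\widehat{\mathscr{O}}_v$ is complete with residue field $\kappa$, the Witt group of $\widehat{\mathscr{O}}_v$ injects into that of $K_v$, and $\rI^n_q$ is compatible (Springer's theorem, valid also in characteristic $2$ for nondegenerate quadratic forms). Hence $q_0$ reduces to $\bar q_0\in\rI^n_q(\kappa)$. The unramified part $\rH^n(\widehat{\mathscr{O}}_v)\cong\rH^n(\kappa)$ maps via lifting, and $e_n(q)$ equals the image of $e_n(\bar q_0)$, since both are computed from Pfister forms with unit entries. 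So $e_n(q)$ is unramified. The main obstacle is the middle step, namely producing a unimodular integral $f$ in characteristic $2$ and keeping the rescaling ambiguity under control. There I would fall back on the fact that $e_n$ is insensitive to scalars, so only the existence of some integral unimodular representative matters.
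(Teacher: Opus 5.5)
Your proposal is correct and follows the paper's own argument: after completing at a codimension~1 point $x$ of a nice model, $q^{\fm}_h$ is defined over $\widehat{\mathscr{O}}_x$ up to a scalar in $K_x^*$, which $e_n$ ignores, and hence $e_n(h)$ is unramified at $x$. You also supply details the paper omits (it only asserts the integrality and cites \cite[\S\,7]{HuSun23} for the last step). Your Azumaya quaternion model, obtained from a point of the proper variety $\SB_2(D)$, works over any nice model, so the reduction to $K=F_-(D)$ is unnecessary. Your rescaling of $f$ is legitimate, because $\ad_f$ preserving the maximal order $\End_{Q_{\widehat{\mathscr O}}}(W_{\widehat{\mathscr O}})$ forces the $f$-dual lattice to be homothetic to $W_{\widehat{\mathscr O}}$.

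The one inaccurate justification is the appeal to Springer's theorem for the $\rI^n_q$-compatibility. It does not by itself give this, and in characteristic~$2$ its decomposition form fails. Instead, prove $\bar q_0\in\rI^m_q(\kappa(x))$ by induction on $m\leqslant n$:
write $q_0$ in $\rW_q(\widehat{\mathscr O}_x)\cong\rW_q(\kappa(x))$ as a sum of lifted $m$-fold Pfister forms, so that $e_m(q^{\fm}_h)$ comes from $\rH^m(\widehat{\mathscr O}_x)$;
for $m<n$, the injectivity of $\rH^m(\widehat{\mathscr O}_x)\to\rH^m(K_x)$ from \eqref{3.1} then gives $e_m(\bar q_0)=0$, i.e.\ $\bar q_0\in\rI^{m+1}_q(\kappa(x))$.
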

\begin{proof}
Let $X$ be a nice model of $K/F$ and let $x$ be any codimension 1 point of $X$. Then the local ring $\mathscr{O}_x$ of $X$ at $x$ is a discrete valuation ring containing $F$. So the hermitian form $h_K$ over the function field $K=F(X)$ can be obtained by first base changing $h$ to $\mathscr{O}_x$ and then to $K$. If $\widehat{\mathscr{O}}_x$ denotes the completion of $\mathscr{O}_x$ and $K_x$ denotes the fraction field of $\widehat{\mathscr{O}}_x$, then  the quadratic form $q^{\mathfrak{m}}_{K_x}$ over the field $K_x$ is defined over the valuation ring $\widehat{\mathscr{O}}_x$. This implies that $e_n(q_K^{\mathfrak{m}})$ is unramified at $x$ (cf. \cite[\S\,7]{HuSun23}).
\end{proof}

\newpara\label{4.4} Retain the notation and hypotheses of \eqref{4.2}.

If $D$ is split, then every nondegenerate even hermitian form $h$ over $(D,\theta)$ is hyperbolic, so $e_n(h)=0$ for all $n\geqslant 1$.

Suppose that $D$ is not split. Then we have $\ind(D_K)=2$ by \cite[Thm.\;3]{Blankchet91}. The reduced norm $\Nrd_Q$ of the quaternion $K$-algebra $Q$ is thus an anisotropic 2-fold Pfister form over $K$. The hermitian form $h^{\fm}_K$ can be put into a diagonal form $\langle \lambda_1,\cdots,\lambda_{dr}\rangle_{\mathrm{h}}$, where $r=\rk(h)$, $d=\ind(D)/2$ and $\lambda_i\in K^*$. (The subscript ``$\mathrm{h}$'' here indicates that the form under consideration is a hermitian form.) Then we have
\[
q^{\fm}_h=\langle \lambda_1,\cdots,\lambda_{dr}\rangle_{\mathrm{bil}}\otimes\Nrd_Q\,\in\;\rI_q^2(K).
\](The subscript ``$\mathrm{bil}$'' here indicates that the form under consideration is a bilinear form.) It follows that
\begin{equation}\label{4.4.1}
  \begin{split}
    e_1(h)&=e_1(q^{\fm}_h)=0,\,\\
    e_2(h)&=e_2(q^{\fm}_h)=dr.(Q)=dr.(D_K)=\begin{cases}
  (D_K) \quad & \text{ if }\; dr \text{ is odd},\\
  0 \quad & \text{ if }\; dr \text{ is even},\\
\end{cases}\\
e_3(h)&=e_3(q^{\fm}_h)=\big((-1)^{\frac{dr}{2}}\lambda_1\cdots\lambda_{dr}\big)\cup (D_K)\quad (\text{when } dr \text{ is even})\,.
  \end{split}
\end{equation}
Since $2(D)=0$, the natural map
\[
\eta^2: \Br(F)=\rH^2(F,\Q/\Z(1))\longrightarrow \rH^2_{nr}(X,\Q/\Z(1))=\Br(X)
\]for $X=\SB_2(D)$ is an isomorphism, by Thm.\;\ref{3.4} (2). So we see from \eqref{4.4.1} that the invariant $e_2(h)$ always lies in the 2-torsion subgroup
\[
\Br(F)[2]=\rH^2(F,\Q/\Z(1))[2]\cong \rH^2_{nr}(X,\Q/\Z(1))[2]=\rH^2_-(D),
\] and when $D$ is not split, $e_2(h)$ vanishes if and only if  $dr=\frac{\ind(D)}{2}\rk(h)=\frac{1}{2}\deg(\End_D(V))$ is even.

If $\ind(D)\leqslant2$, then the natural map
\[
\eta^n: \rH^n(F,\Q/\Z(n-1))\longrightarrow \rH^n_{nr}(X,\Q/\Z(n-1))
\] is an isomorphism by Thm.\;\ref{3.4} (5). So in this case, $e_n(h)=e_n(q_h)$ lies in the cohomology group $\rH^n(F)$ of the base field $F$ for every $n\geqslant 1$, by Lemma\;\ref{4.3}. This shows that when $\ind(D)\leqslant2$, our definition of $e_n$ recovers the classical construction (cf. \cite[\S\,5.1]{Tignol10}).

Finally, note that if $\ind(D)=4$ and if $e_3(h)$ descends to a cohomology class in $\rH^3(F)$, then it descends uniquely by Thm.\;\ref{3.4} (5). \hfill $\blacksquare$

\begin{thm}\label{4.5}
Let $(D,\theta)$ be a CSIA of symplectic type over $F$ and let $h$ be a nondegenerate even hermitian form over $(D,\theta)$.

If $\car(F)\neq 2$, then $h$ is hyperbolic if and only if $e_n(h)=0$ for every $n\geqslant 1$.
\end{thm}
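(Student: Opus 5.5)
The plan is to reduce hyperbolicity of $h$ to hyperbolicity of the quadratic form $q^{\fm}_h$ over the generic index reduction field $K$, and then use the Hauptsatz consequence recalled in \eqref{3.9}.

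\emph{Only if.} If $h$ is hyperbolic, then so is its base extension $h_K$. The Morita functor of \eqref{2.4} preserves hyperbolicity, so $h^{\fm}_K=f\star h_K$ is a hyperbolic even hermitian form over $(Q,\gamma)$. By the standard fact recalled in \eqref{4.1}, its Jacobson trace form $q^{\fm}_h$ is then a hyperbolic quadratic form over $K$. Hence $e_n(q^{\fm}_h)=0$ for all $n$, i.e.\ $e_n(h)=0$ for every $n\geqslant 1$. At each step the next invariant is defined, because the previous one vanishes.

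\emph{If.} Suppose $e_n(h)=0$ for every $n\geqslant 1$. The key step is to note that $e_n(h)$ is by definition $e_n(q^{\fm}_h)\in\rH^n(K)$. The fact (Lemma~\ref{4.3}) that it lies in the unramified subgroup $\rH^n_-(D)$ plays no role here: vanishing in $\rH^n_-(D)\subseteq \rH^n(K)$ is the same as vanishing in $\rH^n(K)$. So all the invariants $e_n(q^{\fm}_h)$ vanish. We argue inductively: $e_1(q^{\fm}_h)=0$ gives $q^{\fm}_h\in\rI^2_q(K)$, which lets us define $e_2$; its vanishing gives $\rI^3_q(K)$, and so on. This uses $\Ker(e_n)=\rI^{n+1}_q(K)$ from \eqref{3.8}, and $q^{\fm}_h$ is even dimensional by \eqref{4.2}. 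Thus the Witt class of $q^{\fm}_h$ lies in $\bigcap_n\rI^n_q(K)=0$, by \eqref{3.9}, so $q^{\fm}_h$ is hyperbolic.

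We now transfer this back to $h$. By \eqref{4.1}, $h^{\fm}_K$ is hyperbolic over $(Q,\gamma)$. Since the Morita functor is an equivalence preserving hyperbolicity, with quasi-inverse also a Morita functor, $h_K$ is hyperbolic over $(D_K,\theta_K)$. Finally, $K$ is a generic index reduction field of symplectic type of $D$, i.e.\ a generic index reduction field of $(D,\theta)$ in the sense of Definition~\ref{2.14}. Since $\car(F)\neq 2$, Theorem~\ref{2.15} of Karpenko and Tignol applies and gives that $h$ itself is hyperbolic.

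If $D$ is split, we can dispense with the argument altogether: $h$ is hyperbolic and all $e_n(h)$ vanish, as noted in \eqref{4.4}. In the nonsplit case $Q$ is a quaternion division algebra, but the argument above does not need this.

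All the genuine difficulty sits in the invoked Theorem~\ref{2.15}. What needs care on our side is mainly bookkeeping:
\begin{itemize}
\item checking that the hypothesis of Theorem~\ref{2.15} is met, namely that $Z(D)=F$ is a field and that $K=F_-(D)$ is a generic index reduction field of the correct (symplectic) type;
\item checking that hyperbolicity passes through the Jacobson trace form in both directions, for $Q$ possibly split, which is covered by \eqref{4.1}.
\end{itemize}
Evenness of $h$ is automatic here, since $\car(F)\neq 2$.
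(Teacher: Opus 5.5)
Your proposal is correct and follows the paper's proof: you pass to the generic index reduction field $K$ of symplectic type, and use Morita equivalence and the Jacobson trace form to identify hyperbolicity of $h_K$ with that of $q^{\fm}_h$. You then conclude via $\bigcap_n \rI^n_q(K)=0$ from \eqref{3.9} and descend with the Karpenko--Tignol Theorem~\ref{2.15}, simply spelling out bookkeeping the paper leaves implicit (both directions, definedness of successive $e_n$, the split case).
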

\begin{proof}
Let $K$ be a generic index reduction field of symplectic type of $D$. By the symplectic case of Theorem\;\ref{2.15}, $h$ is hyperbolic if $h_K$ is hyperbolic. The hyperbolicity of $h_K$ is equivalent to that of the quadratic form $q_h^{\fm}$ over $K$. So the result follows from the well known case of quadratic forms, as we have recalled in \eqref{3.9}.
\end{proof}

The following result proves the Morita invariance of the above $e_n$-invariants.

\begin{prop}\label{4.6}
  Let $(D,\theta)$ be a CSIA of symplectic type over $F$ and let $E=\End_D(V_0)$ for some finitely generated $D$-module $V_0$. Let $h_0$ be a nondegenerate even $\theta$-hermitian form on $V_0$ and put $\tau=\ad_{h_0/\theta}$.

  Then for any nondegenerate even $\tau$-hermitian form $g$ over $(E,\tau)$, letting $h=h_0\star g$ be the hermitian form over $(D,\theta)$ obtained from $g$ by applying the Morita functor, we have $e_n(g)=e_n(h)$ for all $n\geqslant 1$.
\end{prop}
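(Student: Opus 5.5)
The plan is to reduce the statement to the associativity of the Morita functor over the generic index reduction field $K$, together with the insensitivity of the $e_n$ to scalar multiples recorded in \eqref{3.8}.

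First, both sides live in the same group. Since $E$ is Brauer equivalent to $D$ and $\tau$ is symplectic, a nice generic index reduction field $K$ of symplectic type of $D$ (e.g.\ $K=F_-(D)$) is also one for $E$. By \eqref{3.6}, both $e_n(g)$ and $e_n(h)$ lie in $\rH^n_{nr}(K/F)\subseteq\rH^n(K)$, so we may compare them inside $\rH^n(K)$ for this fixed $K$.

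Next, fix the data of \eqref{4.2} for $D$. Write $D_K=\End_Q(W)$, and fix a nondegenerate hermitian form $f$ on $W$ over $(Q,\gamma)$ with $\theta_K=\ad_{f/\gamma}$. Then $e_n(h)$ is computed from the Jacobson trace form of $f\star h_K=f\star(h_{0,K}\star g_K)$.

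For $E$, note that $E_K=\End_{D_K}(V_{0,K})=\End_Q(V_{0,K}\otimes_{D_K}W)$. On the $Q$-module $W':=V_{0,K}\otimes_{D_K}W$ we use the form $f':=f\star h_{0,K}$. Since the Morita construction respects adjoint involutions (see \eqref{2.4}), we have $\ad_{f'/\gamma}=\ad_{h_{0,K}/\theta_K}=\tau_K$. So $f'$ is a legitimate choice of the auxiliary form in \eqref{4.2} for $(E,\tau)$. The definition of $e_n(g)$ is independent of that choice, so $e_n(g)$ is the invariant of the Jacobson trace form of $f'\star g_K=(f\star h_{0,K})\star g_K$.

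The key step, and the only real work, is the associativity isomorphism
\[
(f\star h_{0,K})\star g_K\;\cong\; f\star(h_{0,K}\star g_K).
\]
Both forms live on $U\otimes_{E_K}V_{0,K}\otimes_{D_K}W$, where $U$ is the module of $g$. Both are given on pure tensors $u\otimes v\otimes w$ by the formula $f\big(w,\,h_0(v,\,g(u,u')(v'))(w')\big)$. The identification of modules is the canonical associativity of tensor products, and compatibility with base change to $K$ is clear. I expect checking this explicit formula to be routine; the only subtle point is that the transport of involutions matches, which follows from the defining adjunction property in \eqref{2.4}.

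Hence the two Jacobson trace forms over $K$ are isometric, so their $e_n$-invariants coincide in $\rH^n(K)$ for all $n$. The same argument shows that $e_{n-1}(g)=0$ if and only if $e_{n-1}(h)=0$, so $e_n(g)$ and $e_n(h)$ are defined simultaneously.

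If one instead insists on an arbitrary $f'$ for $E$, it differs from $f\star h_{0,K}$ by a scalar in $K^*$ by \eqref{2.5}. This only rescales the trace form, which does not change $e_n$ by \eqref{3.8}.
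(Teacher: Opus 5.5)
Your proposal is correct and follows the paper's own argument: over a nice generic index reduction field $K$ of symplectic type for $(D)=(E)$, take $f\star h_{0,K}$ as the auxiliary form realizing $\tau_K$ over $(Q,\gamma)$, so that the two Jacobson trace forms coincide. Your explicit check of the associativity $(f\star h_{0,K})\star g_K\cong f\star(h_{0,K}\star g_K)$ on pure tensors spells out what the paper leaves implicit when it says the quadratic forms $q_g^{\fm}$ and $q_h^{\fm}$ are the same.
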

\begin{proof}
 With notation as in \eqref{4.2}, let $K$ be a nice extension of $F$ that is a generic index reduction field of symplectic type of the Brauer class $(D)=(E)\in\Br(F)$. Then $E_K$ and $D_K$ are Brauer equivalent to the same quaternion $K$-algebra $Q$. Moreover, we can choose the Morita functor from $(E_K,\tau_K)$ to $(Q,\gamma)$ to be the one associated to the hermitian form $f\star (h_0)_K$, because
  \[
\tau_K=\ad_{h_0/\theta_K}=\ad_{(f\star h_0)/\gamma}.
  \]Therefore, the quadratic $K$-forms $q_g^{\fm}$ and $q_h^{\fm}$ obtained from $g$ and $h$ are the same. So the result follows from the definition.
\end{proof}

\newpara\label{4.7} As in \eqref{4.2}, let $(D,\theta)$ be a CSIA of symplectic type over $F$ and let $K$ be a nice extension of $F$ that is a generic index reduction field of symplectic type. Recall from \eqref{2.3} that $\rW_+(D,\theta)$ denotes the Witt group of even hermitian forms over $(D,\theta)$. Since the form $f$ to realize $\theta_K$ as an adjoint involution is fixed for all hermitian forms over $(D,\theta)$, it is clear from the construction that the $e_1$-invariant defines a group homomorphism
\[
e_1: \rW_+(D,\theta)\longrightarrow \rH^1_-(D).
\]We define
\[
\rI^1(D,\theta):=\rW_+(D,\theta),
\]and
\[
\rI^n(D,\theta):=\{[h]\in \rW_+(D,\theta)\,|\, e_1(h)=\cdots=e_{n-1}(h)=0\}
\] for each $n\geqslant 2$. Then we have functorial group homomorphisms
\[
e_n:\; \rI^n(D,\theta)\lra \rH^n_-(D).
\]These homomorphisms are compatible with Morita equivalence, by Prop.\;\ref{4.6}. \hfill $\blacksquare$

\medskip

\newpara\label{4.8} Let $A$ be a central simple $F$-algebra of even degree $2m$. We assume that $A$ has exponent $\exp(A)\leqslant2$, so that there exist symplectic involutions on $A$.
Then we can choose a central simple $F$-algebra $D$ of even degree such that $A=\End_D(V)$ for a finitely generated right $D$-module $V$. Note that the semisimple rank $\ssrk_D(V)$ of $V$ over $D$ is equal to the coindex $\coind(A)=\deg(A)/\ind(A)$ of $A$.

Let us also choose a symplectic involution $\theta$ on $D$. Then for any symplectic involution $\sigma$ on $A$, there is a nondegenerate even $\theta$-hermitian form $h$ on $V$ such that $\sigma=\ad_{h/\theta}$. This form $h$, of rank $\rk(h)=\coind(A)$, is uniquely determined up to a scalar multiple from the base field $F$. So we can define the $e_n$-\emph{invariant} of $\sigma$ for each $n\geqslant 1$ by
\[
e_n(\sigma):=e_n(h)\in \rH^n_-(D)=\rH^n_-(A).
\](For $n\geqslant 2$, $e_n(\sigma)$ is defined only when $e_{n-1}(\sigma)=0$.) By Prop.\;\ref{4.6}, this definition is independent of the choice of $(D,\theta)$.

By Thm.\;\ref{4.5}, the symplectic involution $\sigma$ is hyperbolic if and only if $e_n(\sigma)=0$ for all $n\geqslant 1$.

Since the split case is trivial, let us now suppose that $A$ is not split. Then $\ind(A)=\ind(D)$ is even. As we have seen in \eqref{4.4}, the following statements hold:

\begin{enumerate}
   \item The invariant $e_1(\sigma)$ is always trivial.
  \item The invariant $e_2(\sigma)$ descends uniquely to the base field $F$, and it vanishes precisely when
  \[
  \frac{1}{2}\coind(A)\ind(A)=\frac{1}{2}\deg(A)
  \] is even.

  So $e_3(\sigma)$ is defined only when $\deg(A)\equiv 0\pmod{4}$.
  \item If $\ind(A)=2$, then $e_n(\sigma)$ (when it is defined) descends uniquely to $F$ for every $n\geqslant 1$.
  \item If $\ind(A)=4$ and $e_3(\sigma)$ descends to $F$, then it descends uniquely. \hfill $\blacksquare$
\end{enumerate}

\newpara\label{4.9} With notation as in \eqref{4.8}, assume $\deg(A)\equiv 0\pmod{4}$. Let $\sigma$ and $\tau$ be symplectic involutions on $A$. There exists $s\in\Symd(A,\sigma)^\times:=\{\sigma(x)+x\,|\,x\in A\}\cap A^\times$ such that $\tau=\Int(s)\circ \sigma$, where $A^\times$ denotes the group of units in $A$ and $\Int(s):A\to A$ denotes the inner automorphism $x\mapsto sxs^{-1}$. As shown in \cite{BerhuyMonsurroTignol03}, using the Pfaffian reduced norm map $\Nrp_{\sigma}:\Symd(A,\sigma)\to F$ one can define the \emph{relative Rost invariant}
\[
\sR_{\sigma}(\tau):=(\Nrp_\sigma(s))\cup (A)\in \rH^3(F).
\]This invariant  is denoted by $\Delta_{\sigma}(\tau)$ and called the \emph{discriminant} of $\tau$ \emph{with respect to} $\sigma$ in \cite{BerhuyMonsurroTignol03} and \cite{GaribaldiParimalaTignol09}. 
(The papers \cite{BerhuyMonsurroTignol03} and \cite{GaribaldiParimalaTignol09} assume the base field to have characteristic different from 2, but as the reader can easily check, all the definitions and results in those papers that are related to what we discuss in the present paper are still valid in characteristic 2.)

Let $K$ be a nice extension of $F$ that is a generic index reduction field of symplectic type for the Brauer class $(A)$, e.g., $K=F_-(A)$ as in Definition\;\ref{2.14}. Then
\[
\coind(A_K)=\frac{\deg(A_K)}{\ind(A_K)}=\frac{\deg(A)}{2}
\]is even. So there exists a hyperbolic symplectic involution $*$ on $A_K$, and we have
\begin{equation}\label{4.9.1}
  \Res_{K/F}(\sR_\sigma(\tau))=\sR_{\sigma_K}(\tau_K)=\sR_*(\tau_K)-\sR_*(\sigma_K)\in \rH^3(K).
\end{equation}
Let $g$ and $h$ be $\theta$-hermitian forms  on $V$ such that $\sigma=\ad_{g/\theta}$ and $\tau=\ad_{h/\theta}$. Then $\sigma_K$ and $\tau_K$ are adjoint to the hermitian forms  $g_K^{\fm}$ and $h^{\fm}_K$ over $Q$ respectively. Thus, by \eqref{4.4.1} and \cite[p.203, Example\;2]{BerhuyMonsurroTignol03}, we have
\begin{equation}\label{4.9.2}
e_3(h)=\sR_*(\tau_K)\quad \text{and}\quad e_3(g)=\sR_*(\sigma_K).
\end{equation}This combined with \eqref{4.9.1} shows that
\begin{equation}\label{4.9.3}
  \Res_{K/F}(\sR_\sigma(\tau))=e_3(h)-e_3(g)=e_3(\tau)-e_3(\sigma)\in \rH^3(K).
\end{equation}As a consequence, we see that, although $e_3(\tau)$ and $e_3(\sigma)$ may not descend to the base field $F$ (cf. \cite[\S\,5.3]{Tignol10}), the difference $e_3(\tau)-e_3(\sigma)$ always descends. In particular, if $\coind(A)$ is even so that $\sigma$ can be chosen hyperbolic, then $e_3(\tau)$ descends to $F$.

If we further assume $\frac{1}{2}\deg(A)\equiv 0\pmod{4}$, then an absolute invariant $\sR(A,\tau)=\Delta(A,\tau)\in \rH^3(F)$ was defined in \cite{GaribaldiParimalaTignol09}. Here again, one can check that all the results we cite from that paper have analogs in characteristic 2. Indeed, when $A$ has symbol length $\geqslant 3$ and $Y$ denotes the Albert quadric associated to the biquaternion algebra $Q_1\otimes Q_2$ as in \cite[p.353]{GaribaldiParimalaTignol09}, we can prove that the cohomology class $\Delta(A,\tau)_{F(Y)}$ is unramified by using an argument similar to the proof of Lemma\;\ref{4.3}, instead of the residue map argument in \cite{GaribaldiParimalaTignol09}. The fact that in characteristic 2 the class $\Delta(A,\tau)_{F(Y)}$ still descends uniquely to $F$ can be shown by using  results from \cite[\S\,8]{HuSun23}.

By the functoriality of $\sR(A,\tau)$ and \eqref{4.9.2}, we have
\begin{equation}\label{4.9.4}
  \Res_{K/F}(\sR(A,\tau))=\sR_*(\tau_K)=e_3(\tau_K)=e_3(h).
\end{equation}

The formulas \eqref{4.9.3} and \eqref{4.9.4} suggest that up to base change to the generic index reduction field $K=F(X)$ our invariant $e_3(\tau)$ can be viewed as an ``absolute'' version of the relative Rost invariant $\sR_\sigma(\tau)$ defined previously in \cite{BP2} and \cite{BerhuyMonsurroTignol03}, and that it is on the other hand compatible with the absolute Rost invariant $\sR(A,\tau)$ defined in \cite{GaribaldiParimalaTignol09} (when $\deg(A)\equiv 0\pmod{8}$). \hfill $\blacksquare$

\

\newpara\label{4.10} With notation as in \eqref{4.8}, now drop the assumption $\deg(A)\equiv 0\pmod{4}$ and only assume $\deg(A)$ is even. Let $g$ and $h$ be nondegenerate even $\theta$-hermitian forms on the same $D$-module $V$, and let $\sigma=\ad_{g/\theta}$ and $\tau=\ad_{h/\theta}$ be their adjoint involutions on $A=\End_D(V)$.

Since $e_2(-g\perp h)=0$, we can still define the \emph{relative $e_3$-invariant}
\[
e_3(h/g):=e_3(-g\perp h)\in \rH^3_{nr}\big(X,\Q/\Z(2)\big)\subseteq \rH^3\big(K,\Q/\Z(2)\big).
\]For any $\lambda\in F^*$, we have
\[
e_3(-g\perp \lambda h)-e_3(-g\perp h)=e_3(-h\perp\lambda h)=(\lambda^{\deg(A)})\cup (D_K)=\deg(A).(\lambda)\cup(D_K)=0
\]by \eqref{4.4.1}. Similarly, $e_3(-\lambda g\perp h)-e_3(-g\perp h)=0$. Therefore, the \emph{relative $e_3$-invariant} $e_3(\tau/\sigma)$ of the involutions $\sigma$ and $\tau$ can be defined as
\[
e_3(\tau/\sigma):=e_3(h/g)=e_3(-g\perp h)\in \rH^3_{nr}\big(X,\Q/\Z(2)\big)\subseteq \rH^3\big(K,\Q/\Z(2)\big).
\]

If $\deg(A)\equiv 0\pmod{4}$, then $e_3(\tau/\sigma)=\Res_{K/F}(\sR_\sigma(\tau))$ by \eqref{4.9.3}.

If $\deg(A)\equiv 2\pmod{4}$, then $\ind(A)=\ind(D)\leqslant2$. In this case, $e_3(\tau/\sigma)=e_3(h/g)$ descends uniquely to the base field $F$ because
\[
 \rH^3\big(F,\Q/\Z(2)\big)\cong \rH^3_{nr}\big(X,\Q/\Z(2)\big),
\] as was mentioned in \eqref{4.4}. \hfill $\blacksquare$

\subsection{Unitary case}

\newpara\label{4.11} Let $K'$ be a quadratic separable algebra over a field $K$. The nontrivial element $\gamma$ of the $K$-algebra automorphism group $\mathrm{Aut}_K(K')$ is an involution on $K'$. For any  hermitian form $(V,h)$  over $(K',\gamma)$, its \emph{Jacobson trace form} is the quadratic form $q_h$ defined on the underlying $K$-vector space of $V$ by the rule $q_h(x)=h(x,x)$.

It is well known that the hermitian form $h$ is hyperbolic if and only if the quadratic form $q_h$ is hyperbolic, and that the isomorphism class of $q_h$ determines that of $h$ (\cite[Chap.\;10, Thm.\;1.1]{Schar}).

If $K'$ is not a field, then every nondegenerate hermitian form (of constant rank) over $(K',\gamma)$ is hyperbolic. \hfill $\blacksquare$

\

\newpara\label{4.12} Let $(D,\theta)$ be a CSIA of unitary type over a field $F$. Let  $F'=Z(D)$ be the center of $D$. If $F'$ is a field, let $K$ be a nice extension of $F$ that is a generic index reduction field of $(D,\theta)$. If $F'\cong F\times F$, then $D\cong D_0\times D_0^{op}$ for some central simple $F$-algebra $D_0$. In this case, let $K$ be a nice extension of $F$ that is a generic splitting field of $D_0$.

  The base extension $(D_K,\theta_K):=(D\otimes_FK,\,\theta\otimes\mathrm{id}_K)$ is a CSIA of unitary type over $K$. Put $K'=K\otimes_FF'$. We may write $D_K=\End_{K'}(W)$ for a finitely generated $K'$-module  $W$ of constant semisimple rank. Let $\gamma$ be the nontrivial element of the $K$-algebra automorphism  group $\mathrm{Aut}_K(K')$. By \eqref{2.5}, there is a nondegenerate hermitian form $f$ on $W$ (with respect to $\gamma$) such that $\theta_K=\ad_{f/\gamma}$. The form $f$ is uniquely determined up to a scalar multiple from $K^*$. We fix such a form $f$.

Let $(V,h)$ be a nondegenerate hermitian form over $(D,\theta)$. To the hermitian form $(V_K,h_K)$ over $(D_K,\theta_K)$ obtained by base extension, the Morita functor (cf. \eqref{2.4}) associates a nondegenerate  hermitian form $h^{\fm}_K=f\star h_K$ over $(K',\gamma)$. Let $q^{\fm}_h$ denote the Jacobson trace form of $h^{\fm}_K$. It is uniquely determined by $h$ itself once $f$ is fixed, and a different choice of $f$ modifies $h$ only by a scalar multiple in $K^*$. We have
\[
\rk(h^{\fm}_K)=\rk(h_K)=\frac{\ind(D)}{\ind(D_K)}\rk(h)=\ind(D)\rk(h)
\]and hence
\[
\dim_K(q^{\fm}_h)=2\,\rk(h^{\fm}_K)=2\,\rk(h)=2\,\ind(D)\rk(h).
\]In particular, the quadratic form $q_h^{\fm}$ is always even dimensional.

For every integer $n\geqslant 1$, we define the $e_n$-\emph{invariant} of $h$ by
\[
e_n(h):=e_n(q^{\fm}_h)\;\in\; \rH^n(K).
\]Here for $n\geqslant 2$, the invariant $e_n(h)$ is defined only when $e_{n-1}(h)=0$. As in the symplectic case discussed in \eqref{4.2}, these invariants of $h$ depend only on the involution $\theta$ and are independent of the choice of $f$. From the definition, it is also clear that these invariants are functorial in the base field $F$ in the following sense: For every field extension $L/F$, we have
\[
\Res_{LK/K}(e_n(h))=e_n(h_L)\in \rH^n(LK)
\]whenever $e_n(h)$ is defined. \hfill $\blacksquare$

\begin{lemma}\label{4.13}
  With notation and hypotheses as in $\eqref{4.12}$, we have
  \[
e_n(h)\in \rH^n(D/F)=\rH^n_{nr}\big(K/F,\,\Z/2(n-1)\big) \quad \text{ for every }\; n\geqslant 1.
  \]
\end{lemma}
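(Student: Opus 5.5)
We follow the proof of Lemma~\ref{4.3}, in the unitary setting. Fix a nice model $X$ of $K/F$, i.e.\ a smooth proper connected $F$-variety with $F(X)=K$. By \eqref{3.1}, $\rH^n_{nr}\big(X,\Z/2(n-1)\big)$ is the subgroup of $\rH^n(K)$ of classes unramified at every codimension~1 point $x\in X^{(1)}$. So it suffices to show that $e_n(q^{\fm}_h)$ is unramified at each such $x$. Since $\Z/2(n-1)$-cohomology of fields is compatible with completion, we may pass to $K_x$, the fraction field of the completion $\widehat{\mathscr O}_x$. We must then show that $e_n\big((q^{\fm}_h)_{K_x}\big)$ lies in the image of $\rH^n\big(\widehat{\mathscr O}_x,\Z/2(n-1)\big)$.

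The key step is to show that $(q^{\fm}_h)_{K_x}$ is, up to isometry and up to a scalar from $K_x^*$, the base change of a nondegenerate quadratic form over $\widehat{\mathscr O}_x$. Scalars do not matter, since the $e_n$ are insensitive to them by \eqref{3.8}.

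First, $h$ and $(D,\theta)$ are defined over $F\subseteq\widehat{\mathscr O}_x$. We then descend the Morita data. Over $K_x$ the algebra $D_{K_x}$ is split, with center $K_x':=K_x\otimes_F F'$. Here $\widehat{\mathscr O}_x':=\widehat{\mathscr O}_x\otimes_F F'$ is finite \'etale over the complete DVR $\widehat{\mathscr O}_x$, so it is a product of complete DVRs, or is split. The Azumaya $\widehat{\mathscr O}_x'$-algebra $D\otimes_F\widehat{\mathscr O}_x$ is split over $K_x'$. Because the Brauer group of a complete (hence henselian) regular local ring injects into that of its fraction field, it is already split over $\widehat{\mathscr O}_x'$, say $\cong\End_{\widehat{\mathscr O}_x'}(\mathcal W)$ with $\mathcal W$ free. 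By the integral analogue of \eqref{2.5}, $\theta$ is then adjoint to a $\gamma$-hermitian form $\tilde f$ on $\mathcal W$ that is nondegenerate after possibly rescaling by a power of a uniformizer. Over $K_x$, this $\tilde f$ differs from $f$ by a scalar in $K_x^*$. The form $\tilde f\star h$ is a unimodular hermitian form over $(\widehat{\mathscr O}_x',\gamma)$, and its Jacobson trace form is a nondegenerate quadratic form $\tilde q$ over $\widehat{\mathscr O}_x$ with $\tilde q_{K_x}\simeq c\cdot (q^{\fm}_h)_{K_x}$ for some $c\in K_x^*$.

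Finally, we invoke the fact, as in \cite[\S\,7]{HuSun23} and used in Lemma~\ref{4.3}, that $e_n$ of a form defined over $\widehat{\mathscr O}_x$ is unramified. This holds in every characteristic, including characteristic~2. Note also that when $e_n(h)$ is defined, the lower invariants of $\tilde q$ vanish as well.

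The inner-type case $F'\cong F\times F$ is handled the same way. There $K$ is a generic splitting field of $D_0$, $\widehat{\mathscr O}_x'=\widehat{\mathscr O}_x\times\widehat{\mathscr O}_x$, and all forms are hyperbolic. In that case $e_n(h)=0$ trivially, which already gives the claim.

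The main obstacle is the descent in the middle step: producing the unimodular form $\tilde f$ over $\widehat{\mathscr O}_x'$, which requires splitting the Azumaya algebra over the complete ring. We will try injectivity of $\Br(\widehat{\mathscr O}_x')\to\Br(K_x')$ first. Should that be awkward, the fallback is to diagonalize $h^{\fm}_{K_x}$ and absorb valuations of the entries. If some entries have odd valuation, the Jacobson form would split as $q_0\perp\pi q_1$, so the rescaling and unimodularity argument must be checked carefully at exactly this point.
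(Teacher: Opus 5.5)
Your proposal is correct and follows the paper's route, which is the proof of Lemma~\ref{4.3}: pass to the completion at each codimension~1 point, observe that the Jacobson trace form is, up to a scalar, defined over $\widehat{\mathscr O}_x$, and invoke \cite[\S\,7]{HuSun23}. You make explicit the descent of the Morita data, which the paper leaves implicit, and the step you flag as the main obstacle does go through on your primary route: since $\theta$ preserves $\End_{\widehat{\mathscr O}_x'}(\mathcal W)$, the $f$-dual lattice of $\mathcal W$ is stable under this maximal order and hence equals $\pi^k\mathcal W$ for a uniformizer $\pi$ of $\widehat{\mathscr O}_x$ (the extension $\widehat{\mathscr O}_x'/\widehat{\mathscr O}_x$ is unramified, and the split case is trivial since every form there is hyperbolic), so $\pi^k f$ is unimodular and the diagonalization fallback is not needed.
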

\begin{proof}This is proved in the same way as in the proof of Lemma\;\ref{4.3}.
\end{proof}

\newpara\label{4.14} Retain the notation and hypotheses of \eqref{4.12}. We have $\Cor_{F'/F}(D)=0$ in $\Br(F)$ since $D$ admits a unitary $F'/F$-involution. So, by Thm.\;\ref{3.5} (1) and (2),
\begin{equation}\label{4.14.1}
  \rH^n(D/F)=\rH^n(F)\text{  for  } n=1,2\,.
\end{equation} Therefore, the invariants $e_1(h)$ and $e_2(h)$ descend uniquely to cohomology classes defined over the base field $F$.

Let $(c\,]\in \rH^1(F)$ correspond to $F'/F$. That is, $c\in F$ is chosen such that
\[
F'=F_c=\begin{cases}
  F[T]/(T^2-c) \quad & \text{ if }\; \car(F)\neq 2,\\
  F[T]/(T^2-T-c)\quad & \text{ if }\;\car(F)=2\,.
\end{cases}
\] Put $h^{\fm}_K$ into a diagonal form $h^{\fm}_K=\langle \lambda_1,\cdots,\lambda_{dr}\rangle_{\mathrm{h}}$, where $d=\ind(D)$, $r=\rk(h)$ and $\lambda_i\in K^*$. Then we have
\[
q^{\fm}_K=\langle \lambda_1,\cdots,\lambda_{dr}\rangle_{\mathrm{bil}}\otimes \langle\!\langle c\,]\!]\;\in\;\rI_q(K).
\]Hence,
\begin{equation}\label{4.14.2}
\begin{split}
   e_1(h)&=e_1(q^{\fm}_K)=dr.(c\,]=\begin{cases}
     (c\,] \quad & \text{ if }\; dr \text{ is odd},\\
  0\quad & \text{ if }\; dr \text{ is even},
   \end{cases}\\
   e_2(h)&=\big((-1)^{\frac{dr}{2}}\lambda_1\cdots\lambda_{dr}\big)\cup (c\,] \quad (\text{when } (c\,]=0 \text{ or } dr \text{ is even}).
\end{split}
\end{equation}

Suppose that $\deg(\End_D(V))=\ind(D)\rk(h)=dr$ is even. Let $\cD(h)$ be the discriminant algebra of the adjoint involution $\ad_{h/\theta}$ on $\End_D(V)$. By \cite[(10.35)]{KMRT}, the formula for $e_2(h)$ in \eqref{4.14.2} shows that
\[
e_2(h)=\big(\cD(h_K)\big)=\big(\cD(h)_K\big)\in \rH^2(K).
\]In view of \eqref{4.14.1}, we can even assert that
\[
e_2(h)=\big(\cD(h)\big)\in \rH^2(F)=\Br(F)[2].
\]That is, the invariant $e_2(h)$ (when it is defined) coincides with the Brauer class of the discriminant algebra $\cD(h)$.  \hfill $\blacksquare$

\begin{thm}\label{4.15}
Let $(D,\theta)$ be a CSIA of unitary type over a field $F$ with center $F'=Z(D)$. Let $h$ be a nondegenerate hermitian form over $(D,\theta)$.

If $\car(F)\neq 2$, then $h$ is hyperbolic if and only if $e_n(h)=0$ for every $n\geqslant 1$.
\end{thm}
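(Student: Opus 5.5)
The plan mirrors the proof of Theorem\;\ref{4.5}, with one extra case to dispose of. The direction ``hyperbolic $\Rightarrow$ all $e_n(h)=0$'' is formal. If $h$ is hyperbolic, then so is $h_K$. The Morita functor preserves hyperbolicity (cf. \eqref{2.4}), so $h^{\fm}_K$ is hyperbolic over $(K',\gamma)$. By \eqref{4.11} the Jacobson trace form $q^{\fm}_h$ is then a hyperbolic quadratic form over $K$, and hence $e_n(q^{\fm}_h)=0$ for all $n$.

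For the converse, I first treat the case where $F'=Z(D)\cong F\times F$, i.e.\ $\theta$ is unitary of inner type. By \eqref{2.3}, every nondegenerate hermitian form over $(D,\theta)$ is metabolic, and it is automatically even since $\theta$ is unitary. So $h$ is hyperbolic and there is nothing to prove.

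Now assume $F'$ is a field, and let $K$ be a nice generic index reduction field of $(D,\theta)$, i.e.\ a generic $F$-based splitting field of $(D)$, such as $K=F(D)$. Suppose $e_n(h)=e_n(q^{\fm}_h)=0$ for every $n\geqslant 1$. Since $q^{\fm}_h$ is even-dimensional, \eqref{3.9} (via the Hauptsatz) shows that $q^{\fm}_h$ is hyperbolic over $K$. By \eqref{4.11}, this means the hermitian form $h^{\fm}_K$ over $(K',\gamma)$ is hyperbolic; if $K'$ is not a field this holds automatically. Since the Morita functor is an equivalence preserving hyperbolicity, $h_K$ is hyperbolic over $(D_K,\theta_K)$. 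Because $\car(F)\neq 2$, the unitary case of Theorem\;\ref{2.15} applies and gives that $h$ itself is hyperbolic.

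The only step needing care is matching the field $K$ used to define the invariants in \eqref{4.12} with the ``generic index reduction field'' required in Theorem\;\ref{2.15} for the unitary type. By Definition\;\ref{2.14} and \eqref{2.13} these are the same notion: a generic $F$-based splitting field of $(D)$. I also need that the vanishing of all $e_n(h)$ in $\rH^n(D/F)\subseteq\rH^n(K)$ is literally the vanishing of $e_n(q^{\fm}_h)$ in $\rH^n(K)$, which holds by definition. The genuine depth is entirely contained in the Karpenko--Tignol hyperbolicity theorem, which is the reason for the characteristic hypothesis.
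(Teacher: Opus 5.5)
Your proposal is correct and follows essentially the same route as the paper. You dispose of the case $F'\cong F\times F$ directly; otherwise you use the Hauptsatz for $q^{\fm}_h$, the Jacobson trace form criterion and Morita equivalence to reduce to hyperbolicity of $h_K$ over the field $K$ of \eqref{4.12}, and conclude with the unitary case of Theorem~\ref{2.15}, which is exactly what the paper's terser proof does by referring back to the argument for Theorem~\ref{4.5}.
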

\begin{proof}
If $F'$ is not a field, then $h$ is necessarily hyperbolic and $e_n(h)=0$ for all $n\geqslant 1$. So it suffices to treat the case where $F'$ is a field.

Let $K$ be the field chosen as in \eqref{4.12}. By the unitary case of Theorem\;\ref{2.15}, $h$ is hyperbolic if $h_K$ is hyperbolic. So as in the proof of Theorem\;\ref{4.5}, the result follows from the case of quadratic forms.
\end{proof}

The following analogue of Prop.\;\ref{4.6} can be proved in the same way.

\begin{prop}\label{4.16}
  Let $(D,\theta)$ be a CSIA of unitary type over a field $F$ and let $E=\End_D(V_0)$ for some finitely generated $D$-module $V_0$ of constant semisimple rank. Let $h_0$ be a nondegenerate  $\theta$-hermitian form on $V_0$ and put $\tau=\ad_{h_0/\theta}$.

  Then for any nondegenerate even $\tau$-hermitian form $g$ over $(E,\tau)$, letting $h=h_0\star g$ be the hermitian form over $(D,\theta)$ obtained from $g$ by applying the Morita functor, we have $e_n(g)=e_n(h)$ for all $n\geqslant 1$.
\end{prop}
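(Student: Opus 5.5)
The argument runs parallel to the proof of Prop.\;\ref{4.6}. The idea is to show that the quadratic $K$-forms $q^{\fm}_g$ and $q^{\fm}_h$ coincide for a suitable choice of the auxiliary hermitian form $f$ in \eqref{4.12}, and then to invoke the independence of the $e_n$-invariants from the choice of $f$.

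First fix the field $K$. The algebras $E$ and $D$ have the same center $F'$. If $F'$ is a field, they are Brauer equivalent over $F'$. If $F'\cong F\times F$, then $E\cong E_0\times E_0^{op}$ with $E_0$ Brauer equivalent to $D_0$. In either case, a nice extension $K/F$ that is a generic index reduction field of $F$-unitary type (resp. a generic splitting field of $D_0$) serves for both $(D,\theta)$ and $(E,\tau)$, and $\rH^n(E/F)=\rH^n(D/F)$.

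Write $D_K=\End_{K'}(W)$ and fix $f$ on $W$ with $\theta_K=\ad_{f/\gamma}$. Then $E_K=\End_{D_K}\big((V_0)_K\big)=\End_{K'}\big((V_0)_K\otimes_{D_K}W\big)$. By the associativity of adjoint involutions under Morita composition (cf. \eqref{2.4} and \cite[\S\;I.9]{Knus}), we have
\[
\tau_K=\ad_{(h_0)_K/\theta_K}=\ad_{(f\star (h_0)_K)/\gamma}.
\]
Hence $f':=f\star (h_0)_K$ is a legitimate choice of the form used in \eqref{4.12} to define the invariants of hermitian forms over $(E,\tau)$.

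With this choice, $q^{\fm}_g$ is the Jacobson trace form of $f'\star g_K=(f\star (h_0)_K)\star g_K$. Meanwhile $q^{\fm}_h$ is the trace form of $f\star h_K=f\star\big((h_0)_K\star g_K\big)$, since the Morita functor commutes with base change. The one step requiring care, though routine, is the associativity isomorphism
\[
(g\text{-module}\otimes_E V_0)\otimes_D W\cong g\text{-module}\otimes_E(V_0\otimes_D W),
\]
which must be checked to be an isometry between the two $K'$-hermitian forms. This follows directly from the defining formula for $\star$ in \eqref{2.4}. It then gives $q^{\fm}_g\cong q^{\fm}_h$, so $e_n(g)=e_n(h)$ for all $n$, including the statement that one is defined exactly when the other is.

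The only other point to note is that in the unitary case the invariants are insensitive to rescaling $f$ by $K^*$, as observed in \eqref{4.12}. This is what makes the choice of $f'$ permissible. Evenness plays no role here, since all unitary hermitian forms are even.
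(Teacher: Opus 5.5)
Your proposal is correct and is essentially the paper's argument. The paper proves this exactly as Prop.~\ref{4.6}: it chooses the Morita functor for $(E_K,\tau_K)$ to be the one attached to $f\star (h_0)_K$, using $\tau_K=\ad_{(f\star (h_0)_K)/\gamma}$, so that $q^{\fm}_g$ and $q^{\fm}_h$ coincide; your explicit check of the associativity isometry and of independence from the choice of $f$ fills in details the paper leaves implicit.
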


\newpara\label{4.17} Let $(D,\theta)$ and $K$ be as in \eqref{4.12}.  Recall from \eqref{2.3} that $\rW(D,\theta)$ denotes the Witt group of hermitian forms over $(D,\theta)$. Since the form $f$ to realize $\theta_K$ as an adjoint involution is fixed for all hermitian forms over $(D,\theta)$, it is clear from the construction that the $e_1$-invariant defines a group homomorphism
\[
e_1: \rW(D,\theta)\longrightarrow \rH^1(D/F).
\]We define
\[
\rI^1(D,\theta):=\rW(D,\theta),
\]and
\[
\rI^n(D,\theta):=\{[h]\in \rW(D,\theta)\,|\, e_1(h)=\cdots=e_{n-1}(h)=0\}
\] for each $n\geqslant 2$. Then we have functorial group homomorphisms
\[
e_n:\; \rI^n(D,\theta)\lra \rH^n(D/F).
\]These homomorphisms are compatible with Morita equivalence, by Prop.\;\ref{4.16}. \hfill $\blacksquare$

\medskip

\newpara\label{4.18} Let $F'$ be a quadratic separable algebra over a field $F$ and let $B$ be a separable $F$-algebra with center $Z(B)=F'$. If $F'$ is a field, then $B$ is a central simple algebra over $F'$, so the degree $\deg(B)$, the index $\ind(B)$ and the coindex $\coind(B)$ can be defined in the usual way for central simple $F'$-algebras. If $F'\cong F\times F$, then $B\cong B_0\times B_0^{op}$ for some central simple $F$-algebra $B_0$. In this case, we define the degree, index and coindex of $B$ by
\[
\deg(B)=\deg(B_0)\,,\;\ind(B)=\ind(B_0)\quad\text{and}\quad \coind(B)=\coind(B_0).
\]
We can choose a separable $F$-algebra $D$ with center $F'$ and a finitely generated right $D$-module $V$ of constant semisimple rank  such that $B=\End_D(V)$. We assume that there exist unitary $F'/F$-involutions on $D$ (hence also on $B$), and we fix such an involution $\theta$.

For any $F'/F$-involution $\sigma$ on $B$, there is a nondegenerate $\theta$-hermitian form $h$ on $V$ such that $\sigma=\ad_{h/\theta}$. This form $h$, with $\rk(h)=\coind(B)$, is uniquely determined up to a scalar multiple from $F$. So we can define the $e_n$-\emph{invariant} of $\sigma$ for each $n\geqslant 1$ by
\[
e_n(\sigma):=e_n(h)\in \rH^n(D/F)=\rH^n(D/F).
\]Here, as before, when $n\geqslant 2$ the invariant $e_n(\sigma)$ is defined only when $e_{n-1}(\sigma)=0$. By Prop.\;\ref{4.16}, this definition is independent of the choice of $(D,\theta)$.

By Thm.\;\ref{4.15}, the unitary involution $\sigma$ is hyperbolic if and only if $e_n(\sigma)=0$ for all $n\geqslant 1$.

The case with $F'\cong F\times F$ being trivial, we now assume that $F'$ is a field. By \eqref{4.14}, the following statements hold:

\begin{enumerate}
   \item The invariant $e_1(\sigma)$ descends uniquely to the base field $F$, and it is  trivial if and only if $\deg(B)$ is even.

   If $\deg(B)$ is odd, then $e_1(\sigma)\in \rH^1(F)$ corresponds to the quadratic separable extension $F'/F$.
  \item When $\deg(B)$ is even, the invariant $e_2(\sigma)$ descends uniquely to $F$, and it is equal to the Brauer class of the discriminant algebra of the involution $\sigma$. \hfill $\blacksquare$
\end{enumerate}

\newpara\label{4.19} With notation as in \eqref{4.18}, let $\sigma$ be a unitary $F'/F$-involution on $B$ and let $\Delta_{\sigma}\in\Br(F)$ be the Brauer class given by
\[
\Delta_\sigma=\begin{cases}
  0\quad & \text{ if } \deg(B) \text{ is odd},\\
  (\cD(\sigma))\quad & \text{ if } \deg(B) \text{ is even}.
\end{cases}
\](Recall that $\cD(\sigma)$ denotes the discriminant algebra of $\sigma$ when $\deg(B)$ is even.) Define
\[
N^3_{B,\sigma}(F):=\frac{\rH^3\big(F,\Q/\Z(2)\big)}{F^*\cup\Delta_\sigma+\Cor_{F'/F}\big((F')^*\cup(B)\big)}.
\]

Let $\tau$ be another $F'/F$-involution on $B$ and assume further that $\cD(\sigma)\cong\cD(\tau)$ (or equivalently $\Delta_\sigma=\Delta_\tau$) if $\deg(B)$ is even. In \cite{BarryMasqueleinQueguinerMathieu22}, a functorial relative degree 3 invariant
\[
\sR_{\sigma}(\tau)\in N^3_{B,\sigma}(F)=N^3_{B,\tau}(F)
\]is defined. We use the terminology ``\emph{Rost invariant} of $\tau$ relative to $\sigma$'' for this invariant, which is denoted by $e_3^{\sigma}(\tau)$ and called the \emph{Arason invariant} of $\tau$ relative to $\sigma$ in \cite{BarryMasqueleinQueguinerMathieu22}.  (The paper \cite{BarryMasqueleinQueguinerMathieu22} assumes the base field to have characteristic different from 2. As the reader can easily check, all the definitions and results we cite from that paper are valid in arbitrary characteristic.)

To briefly recall the definition of $\sR_{\sigma}(\tau)$, let $\bfSU(B,\sigma)$ and $\bfPGU(B,\sigma)$ denote the special unitary group and the projective unitary group of $(B,\sigma)$ respectively (cf. \cite[p.346]{KMRT}). The triple $(B,\tau,\mathrm{Id}_{F'})$ represents an element $\eta$ in the cohomology set $\rH^1\big(F,\bfPGU(B,\sigma)\big)$, and our assumption implies that $\eta$ lifts to some $\xi\in \rH^1(F,\bfSU(B,\sigma))$. Let
\[
\sR_{\bfSU(B,\sigma)}:\;\rH^1(F,\bfSU(B,\sigma))\lra \rH^3\big(F,\Q/\Z(2)\big)
\]be the Rost invariant map of the semisimple simply connected group $\bfSU(B,\sigma)$. The canonical image of $\sR_{\bfSU(B,\sigma)}(\xi)$ in the quotient group $N^3_{B,\sigma}(F)$ is independent of the choice of  $\xi$ and depends only on $\tau$. So the relative Rost invariant
\[
\sR_{\sigma}(\tau):=[\sR_{\bfSU(B,\sigma)}(\xi)]\in N^3_{B,\sigma}(F)
\]is well defined.

In the case $F'\cong F\times F$, we have $(B,\sigma)\cong (B_0\times B_0^{op},\iota)$ for some central simple $F$-algebra $B_0$ with the exchange involution $\iota$, and if moreover $\deg(B)=\deg(B_0)=2m$ is even, we have  $\Delta_\sigma=m(B_0)$ by \cite[p.129, (10.31)]{KMRT}. So we have
\[
  N^3_{B,\sigma}(F)=\frac{\rH^3\big(F,\Q/\Z(2)\big)}{F^*\cup(B_0)}
\]in this case. Moreover, the natural map $\rH^1(F,\bfSU(B,\sigma))\to \rH^1(F,\bfPGU(B,\sigma))$ is trivial since it factors through
\[
\rH^1(F,\bfU(B,\sigma))=\rH^1(F,\bfU\big(B_0\times B_0^{op},\iota)\big)=\rH^1(F,\bfGL_1(B_0))=1.
\] Hence
\begin{equation}\label{4.19.1}
   \sR_{\sigma}(\tau)=0\in N^3_{B,\sigma}(F)=\frac{\rH^3\big(F,\Q/\Z(2)\big)}{F^*\cup(B_0)}
\end{equation}
when $F'\cong F\times F$.

Let $g$ and $h$ be nondegenerate $\theta$-hermitian forms on  $V$ such that $\sigma=\ad_{g/\theta}$ and $\tau=\ad_{h/\theta}$. Still under the assumption that  $\cD(\sigma)\cong\cD(\tau)$ when $\deg(B)$ is even, we can define the \emph{relative Rost invariant} $\sR_g(h)$ of $h$ relative to $g$ by
\[
\sR_g(h):=\sR_{\sigma}(\tau)\in N^3_{B,\sigma}(F)=N^3_{B,\tau}(F).
\]As we have seen in \eqref{4.19.1}, $\sR_g(h)=0\in N^3_{B,\sigma}(F)$ if $F'$ is not a field. Note also that if $\Delta_{\sigma}=0$ (i.e. $\deg(B)$ is odd or $\deg(B)$ is even with $\cD(\sigma)$ split), then this relative Rost invariant $\sR_g(h)$ lies in
\[
N^3_{B,\sigma}(F)=\frac{\rH^3\big(F,\Q/\Z(2)\big)}{\Cor_{F'/F}((F')^*)\cup(B)}.
\]In this case, the above definition of $\sR_g(h)$ coincides with the one used in \cite[Appendix]{PaPr} and \cite[\S\,2.2]{Preeti13}.

Let $K=F(X)$ be the function field of $X=\mathrm{R}_{F'/F}\SB(D)$. By \eqref{4.14.2}, we have
$e_1(-g\perp h)=0$, so that the invariant
\[
e_2(-g\perp h)\in \rH^2\big(F,\Q/\Z(2)\big)=\rH^2_{nr}\big(X,\Q/\Z(2)\big)
\] is defined.

If $\deg(B)$ is even, then $e_1(g)=e_1(h)=0$, and the assumption $\cD(\sigma)=\cD(\tau)$ means that
\[
e_2(-g\perp h)=e_2(h)-e_2(g)=0 \in \rH^2\big(F,\Q/\Z(2)\big)=\rH^2_{nr}\big(X,\Q/\Z(2)\big).
\]Thus, we can define a \emph{relative $e_3$-invariant} $e_3(h/g)$ by
\begin{equation}\label{4.19.2}
e_3(h/g):=e_3(-g\perp h)\in \rH^3_{nr}\big(X,\Q/\Z(2)\big)\subseteq \rH^3\big(K,\Q/\Z(2)\big).
\end{equation}If moreover $\cD(\sigma)$ is split (i.e. $\Delta_\sigma=0$), then $e_3(g)$ and $e_3(h)$ are also defined, and we have
\[
e_3(h/g)=e_3(-g\perp h)=e_3(h)-e_3(g)\in \rH^3\big(K,\Q/\Z(2)\big).
\]

Let $\bar{e}_3(h/g)$ be the canonical image of $e_3(h/g)$ under the quotient map
\[
\rH^3\big(K,\Q/\Z(2)\big)\lra N^3_{B,\sigma}(K)=\frac{\rH^3\big(K,\Q/\Z(2)\big)}{K^*\cup(\Delta_\sigma)_K}.
\]Then, by \cite[Lemma\;2.6]{BarryMasqueleinQueguinerMathieu22},
\begin{equation}\label{4.19.3}
  \bar{e}_3(h/g)=\Res_{K/F}(\sR_g(h))\in N^3_{B,\sigma}(K).
\end{equation}

If $\deg(B)$ is odd, then
\[
N^3_{B,\sigma}(K)=\rH^3\big(K,\Q/\Z(2)\big)
\]because $\Delta_\sigma=0$ and $(B_K)=0$. Although the invariant $e_3(-g\perp h)\in \rH^3\big(K,\Q/\Z(2)\big)$ may not be well defined, we can use \eqref{4.19.3} to define the relative invariant
\begin{equation}\label{4.19.4}
  e_3(h/g):=\Res_{K/F}(\sR_g(h))\in N^3_{B,\sigma}(K)=\rH^3\big(K,\Q/\Z(2)\big).
\end{equation}Since $\sR_g(h)$ descends to $F$, the invariant $e_3(h/g)$ in \eqref{4.19.4} still lies in the subgroup  $ \rH^3_{nr}\big(X,\Q/\Z(2)\big)$, just as in the case of \eqref{4.19.2}. \hfill $\blacksquare$

\begin{remark}\label{4.20}
  In the context of \eqref{4.19}, if $\coind(B)=\rk(h)$ is even, then by taking $g$ to be a hyperbolic form in \eqref{4.19.3} and \eqref{4.19.4} we see that
  the invariant $e_3(h)\in \rH^3_{nr}(X,\Q/\Z(2))$ (when it is defined) descends to $F$. If moreover $\ind(B)\leqslant 2$, we can view $e_3(h)$ as an element of the group
  \[
  \frac{\rH^3\big(F,\Q/\Z(2)\big)}{\Cor_{F'/F}((F')^*\cup (D))},
  \] by Thm.\;\ref{3.5} (4). \hfill $\blacksquare$
\end{remark}

\section{Cohomological invariants of hermitian pairs}

In this section, let $F$ be a field of arbitrary characteristic and let $(D,\theta)$ be a CSIA over $F$. Assume that $\theta$ is  orthogonal (resp. symplectic) if $\car(F)\neq 2$ (resp. $\car(F)=2$).

It may not be necessary to fix the type of $(D,\theta)$ as above. But just for convenience, our choice ensures that $\theta$ and the adjoint involution $\ad_{h/\theta}$ are of the same type for any even $\theta$-hermitian form $h$. In characteristic $\neq 2$, if $\theta$ is chosen symplectic, hermitian forms should be replaced by skew-hermitian forms. In characteristic 2, the requirement that $h$ be even guarantees that $\ad_{h/\theta}$ is symplectic as long as $\theta$ is of the first kind.

\subsection{Construction of invariants}

\newpara\label{5.1} Let $(W,b)$ be a nondegenerate symmetric bilinear form over a field $K$. Assume that $b$ is alternating (so that $\dim W$ is even) if $\car(K)=2$. Let $f:\Sym(\End_K(W),\ad_{b/\mathrm{id}_K})$ be a semi-trace (cf. \eqref{2.6}) for the involutorial algebra $(\End_K(W),\ad_{b/\mathrm{id}_K})$. The pair $(b,f)$ is a hermitian pair over $(K,\mathrm{id}_K)$. According to the proof of \cite[(5.11)]{KMRT}, by using the standard identification
\[
\varphi_b:W\otimes_KW\overset{\sim}{\longrightarrow}\End_K(W);\quad \varphi_b(v\otimes w)(x):=vb(w,x),
\]we can associate to $(b,f)$ the quadratic form $q:W\to F$ given by $q(v)=f\circ\varphi_b(v\otimes v)$. Moreover, the central simple $K$-algebra with quadratic pair $(\mathrm{End}_K(W),\ad_{b/\mathrm{id}_K},f)$ is isotropic (resp. hyperbolic)  if and only if the quadratic form $q$ is isotropic (resp. hyperbolic), by \cite[(6.6) and (6.13)]{KMRT}. \hfill $\blacksquare$

\medskip

\newpara\label{5.2} Let $K$ be a nice extension of $F$ that is a generic index reduction field of orthogonal type for $(D)$, e.g., $K=F_+(D)$ as in Definition\;\ref{2.14}. We fix a nondegenerate symmetric bilinear form $b_0$ over $K$ such that $\theta_K=\ad_{b_0/\mathrm{id}_K}$.

Let $\fp=(h,f)$ be a hermitian pair over $(D,\theta)$ such that $\rk(\fp)\ind(D)$ is even. As was discussed in \eqref{2.11}, using the Morita functor associated to $b_0$, we can associate to $\fp_K$ a hermitian pair $\fp_K^{\fm}$ over $(K,\mathrm{id}_K)$, which determines a nondegenerate quadratic form $q_{\fp}^{\fm}$ over $K$ in the way described in \eqref{5.1}.  This quadratic form is uniquely determined by $\fp$ once $b_0$ is fixed (otherwise its similarity class is uniquely determined by $\theta$), and its dimension is given by
\[
\dim(q_{\fp}^{\fm})=\deg\big(\End_D(V)\big)=\rk(h)\ind(D)=\rk(\fp)\ind(D).
\]For every integer $n\geqslant 1$ we can define the $e_n$-\emph{invariant} of $\fp$ by
\[
e_n(\fp):=e_n(q_{\fp}^{\fm})\in \rH^n(K).
\]Here as in \eqref{4.2} and \eqref{4.12}, for $n\geqslant 2$ the invariant $e_n(\fp)$ is defined only when $e_{n-1}(\fp)=0$, and these $e_n$-invariants are functorial in the base field. \hfill $\blacksquare$

\begin{lemma}\label{5.3}
  With notation and hypotheses as in $\eqref{5.2}$, we have
  \[
e_n(\fp)\in \rH^n_{+}(D)=\rH^n_{nr}\big(K/F,\,\Z/2(n-1)\big) \quad \text{ for every }\; n\geqslant 1.
  \]
\end{lemma}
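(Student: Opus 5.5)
The plan is to follow the proof of Lemma~\ref{4.3}: show that $e_n(\fp)$ is unramified at every codimension~1 point of a nice model of $K/F$. Fix a nice model $X$ of $K/F$ and a codimension~1 point $x\in X^{(1)}$. The local ring $\mathscr{O}_x$ is a discrete valuation ring containing $F$; let $\widehat{\mathscr{O}}_x$ be its completion, with fraction field $K_x$. By the description recalled in \eqref{3.1}, it suffices to show that the image of $e_n(q^{\fm}_{\fp})$ in $\rH^n(K_x)$ lies in $\rH^n\big(\widehat{\mathscr{O}}_x,\Z/2(n-1)\big)$.

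The first step is to observe that $\fp_K$ is the base change of $\fp$ along $F\to\mathscr{O}_x\to K$. The only non-canonical input in the construction of \eqref{5.2} is the form $b_0$, which is used in the Morita functor. So the crucial step is to arrange that, after passing to $\widehat{\mathscr{O}}_x$, the Morita datum is defined integrally. I would replace $b_0$ by a suitable scalar multiple; this changes $q^{\fm}_{\fp}$ only by a scalar in $K^*$ and leaves all $e_n$ unchanged by \eqref{3.8}. Concretely, $D_{K_x}$ is split, since $K_x\supseteq K$. Because $\widehat{\mathscr{O}}_x$ is a complete DVR, the Azumaya algebra $D\otimes_F\widehat{\mathscr{O}}_x$ has Brauer class in $\Br(\widehat{\mathscr{O}}_x)$ which injects into $\Br(K_x)$. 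Hence $D\otimes_F\widehat{\mathscr{O}}_x\cong\End(L)$ for a free $\widehat{\mathscr{O}}_x$-module $L$.

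The involution $\theta\otimes\mathrm{id}$ is then adjoint to a symmetric bilinear form on $L$ with unit discriminant. This form is alternating in characteristic~2 and is unique up to a unit times a scalar. Up to a scalar in $K_x^*$, it agrees with $b_0$ over $K_x$. Performing the Morita functor of \eqref{2.11} over $\widehat{\mathscr{O}}_x$, together with the construction of \eqref{5.1} applied to the semi-trace, produces a quadratic form over $\widehat{\mathscr{O}}_x$ whose generic fiber is (a scalar multiple of) $q^{\fm}_{\fp}\otimes_KK_x$. The semi-trace extends because $f$ is defined over $F$. This quadratic form is nondegenerate over $\widehat{\mathscr{O}}_x$, since $\fp$ is nondegenerate over $F$ and everything is obtained by base change.

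The final step invokes the fact that the invariants $e_n$ of a nondegenerate quadratic form defined over a (complete) discrete valuation ring are unramified, in any characteristic, as in \cite[\S\,7]{HuSun23}. This is exactly the input used in Lemma~\ref{4.3}, and it gives the conclusion.

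I expect the main obstacle to be the integrality step in characteristic~2, namely the claim that the semi-trace and the associated quadratic form of \eqref{5.1} descend to a nondegenerate quadratic form over $\widehat{\mathscr{O}}_x$. Here I would work with quadratic pairs over Azumaya algebras, following \cite{KMRT}'s construction, which is valid over rings. The remaining steps are formal.
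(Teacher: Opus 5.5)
Your proposal is correct and follows the paper's argument. The paper proves Lemma~5.3 exactly as it proves Lemma~4.3: at each codimension~1 point $x$ of a nice model, the form $q^{\fm}$ over $K_x$ is defined over $\widehat{\mathscr{O}}_x$, and one then invokes \cite[\S\,7]{HuSun23}. Your integrality argument (splitting $D\otimes_F\widehat{\mathscr{O}}_x$ via the injectivity of $\Br(\widehat{\mathscr{O}}_x)\to\Br(K_x)$, writing $\theta$ as adjoint to a unimodular form, and carrying out the Morita and semi-trace constructions over the ring) spells out a step the paper leaves implicit.
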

\begin{proof}This is proved in the same way as in the proof of Lemma\;\ref{4.3}.
\end{proof}

\newpara\label{5.4} Retain the notation and hypotheses of \eqref{5.2}. By Thm.\;\ref{3.4} (1) and (2), we have canonical isomorphisms
\[
 \rH^1(F)\cong \rH^1_+(D)\quad \text{and}\quad \frac{\rH^2(F,\,\Z/4(1))}{\langle (D)\rangle}\cong\rH^2_+(D).
\](Note that the Brauer class $(D)\in\Br(F)$ is 2-torsion.) Therefore, $e_1(\fp)$ descends uniquely to the base field  $F$ and $e_2(\fp)$ descends to a 4-torsion Brauer class over $F$.

If $\ind(D)\leqslant2$, then by Thm.\;\ref{3.4} (4),
\[
\frac{\rH^n(F,\,\Z/4(n-1))}{\rH^{n-2}(F,\Z/2(n-2))\cup (D)}\cong\rH^n_+(D)
\]
for every $n\geqslant 3$, so in this case $e_n(\fp)$ (when it is defined) descends to a 4-torsion class over $F$. \hfill $\blacksquare$

\medskip

It is clear that our construction of the $e_n$-invariants generalizes that of Berhuy \cite{Berhuy07ArchMath} to arbitrary characteristic and arbitrary index. We also have the following generalization of \cite[Thm.\;13]{Berhuy07ArchMath}.

\begin{thm}\label{5.5}
Let $\fp$ be a hermitian pair over $(D,\theta)$ such that $\rk(\fp)\ind(D)$ is even.
Then $\fp$ is hyperbolic if and only if $e_n(\fp)=0$ for all $n \geqslant 1$.
\end{thm}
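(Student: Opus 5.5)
The plan is to imitate the proofs of Theorems\;\ref{4.5} and \ref{4.15}: reduce hyperbolicity of $\fp$ to hyperbolicity of the quadratic form $q^{\fm}_{\fp}$ over the generic index reduction field $K$, and then apply \eqref{3.9}.

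First, the easy direction. If $\fp$ is hyperbolic, then so is $\fp_K$, since hyperbolicity of the associated CSAQP is preserved by scalar extension. The Morita functor of \eqref{2.11}, built from $b_0$, also preserves hyperbolicity, so $\fp^{\fm}_K$ is a hyperbolic hermitian pair over $(K,\mathrm{id}_K)$. By \eqref{5.1}, the quadratic form $q^{\fm}_{\fp}$ is then hyperbolic, and hence $e_n(\fp)=e_n(q^{\fm}_{\fp})=0$ for all $n$. For the converse, suppose all $e_n(\fp)$ vanish; each is defined successively because the previous one vanishes. Since $q^{\fm}_{\fp}$ is nondegenerate of even dimension $\rk(\fp)\ind(D)$, \eqref{3.9} shows that $q^{\fm}_{\fp}$ is hyperbolic over $K$. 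By \eqref{5.1}, the CSAQP $(\End_K(W'),\ad,f')$ attached to $\fp^{\fm}_K$ is hyperbolic. The Morita equivalence of \eqref{2.11} identifies it with the CSAQP of $\fp_K$ via the isomorphism $\phi$, and by construction $\phi$ carries semi-traces to semi-traces. So $\fp_K$ is hyperbolic.

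Next, I descend from $K$ to $F$. This step splits by characteristic. If $\car(F)\neq 2$, then $\theta$ is orthogonal, $f=\frac12\Trd$ is forced, and hyperbolicity of the hermitian pair $\fp$ is the same as hyperbolicity of the hermitian form $h$. Indeed, for the quadratic pair, hyperbolicity reduces to hyperbolicity of the involution, which in turn is the same as hyperbolicity of $h$ up to scalars. The orthogonal case of Theorem\;\ref{2.15} then gives that $h$ is hyperbolic, because $K$ is a generic index reduction field of orthogonal type. If $\car(F)=2$, then $\theta$ is symplectic, and Theorem\;\ref{2.16} applies directly to hermitian pairs: $\fp$ is hyperbolic because $\fp_K$ is.

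The only subtle point is checking that $K$ is genuinely the right field in each invocation. Theorem\;\ref{2.16} is stated for an arbitrary generic index reduction field of orthogonal type of $D$, and $K$ was chosen as such a field in \eqref{5.2}. Some care is also needed to confirm that the identification in \eqref{5.1} between hyperbolicity of a split CSAQP and of its quadratic form applies in characteristic 2 with $b_0$ alternating. This holds by \cite[(6.13)]{KMRT}. I expect no serious obstacle: all the real difficulty sits in Theorems\;\ref{2.15} and \ref{2.16}, which are taken as given.
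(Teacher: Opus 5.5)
Your proposal is correct and follows essentially the same route as the paper: over the generic index reduction field $K$ of orthogonal type, hyperbolicity of $\fp_K$ is equivalent to hyperbolicity of $q^{\fm}_{\fp}$, which is detected by the $e_n$ via \eqref{3.9}, and Theorem~\ref{2.15} (for $\car(F)\neq 2$) or Theorem~\ref{2.16} (for $\car(F)=2$) then descends hyperbolicity from $K$ to $F$. You simply spell out, more explicitly than the paper's two-line proof, the easy direction, the Morita/semi-trace compatibility, and the passage from hermitian pairs to hermitian forms in characteristic $\neq 2$.
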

\begin{proof}
For any generic index reduction field $K$  of orthogonal type of $D$, the hermitian pair $\fp_K$ is hyperbolic if and only if the associated quadratic form $q_{\fp}^{\fm}$ is hyperbolic. So it suffices to apply Theorems\;\ref{2.15} and \ref{2.16}.
\end{proof}

\newpara\label{5.6} Let  $(E,\tau)=(\End_D(V_0),\,\ad_{h_0/\theta})$, where $(V_0,h_0)$ is a nondegenerate even hermitian form over $(D,\theta)$. For a hermitian pair $\mathfrak{P}$ over $(E,\tau)$, if $\fp$ denotes the hermitian pair over $(D,\theta)$ obtained by applying the Morita functor to $\mathfrak{P}$, then,  similar to the symplectic and unitary cases as shown in Props.\;\ref{4.6} and \ref{4.16},   $e_n(\mathfrak{P})=e_n(\fp)$ for every $n\geqslant 1$.

If $(A,\sigma,f)$ is a CSAQP such that $(A)=(D)\in \Br(F)$ and $\deg(A)$ is even, then we can define the $e_n$-\emph{invariant} of $(A,\sigma,f)$ by
\[
e_n(A,\sigma,f):=e_n(h,f)\,,\quad n\geqslant 1\,,
\]where $(h,f)$ is a hermitian lift of $(A,\sigma,f)$ over $(D,\theta)$. By the Morita invariance we have just seen above, this definition is independent of the choice of $(D,\theta)$.

Let $\rW_{\hp}(D,\theta)$ be the Witt group of hermitian pairs over $(D,\theta)$, as defined in \eqref{2.11}. Define
\[
\rI^1_{\hp}(D,\theta):=\{\fp\in\rW_{\hp}(D,\theta)\,|\,\rk(\fp)\ind(D) \text{ is even}\}
\]and
\[
\rI^n_{\hp}(D,\theta):=\{\fp\in\rW_{\hp}(D,\theta)\,|\,\rk(\fp)\ind(D) \text{ is even and } e_1(\fp)=\cdots=e_{n-1}(\fp)=0\}
\]
for each $n\geqslant 2$. Then, similar to the symplectic and unitary cases discussed in \eqref{4.7} and \eqref{4.17} respectively, our $e_n$-invariants induce group homomorphisms
\[
e_n:\rI^n_{\hp}(D,\theta)\longrightarrow \rH^n_+(D)\,,\quad n\geqslant 1,
\]which are compatible with Morita equivalence. \hfill $\blacksquare$

\subsection{Link with classical invariants}\label{sec5p2}

\newpara\label{5.7} For a central simple algebra with quadratic pair  $(A,\sigma,f)$ over $F$, let its \emph{discriminant} $\mathrm{disc}(A,\sigma,f) \in \rH^1(F,\Z/2)$ be defined as in  \cite[\S\,7.B]{KMRT}. The \emph{Clifford algebra} $C(A,\sigma,f)$ of $(A,\sigma,f)$ is defined in \cite[(8.7)]{KMRT}. If $\deg(A)$ is even and $\mathrm{disc}(A,\sigma,f)=0$, the Clifford algebra $C(A,\sigma,f)$ decomposes as a direct product $C^+(A,\sigma,f) \times C^-(A,\sigma,f)$ of two central simple $F$-algebras (\cite[(8.10)]{KMRT}). In this case, we define the \emph{Clifford invariant} of $(A,\sigma,f)$ by
\[
\mathrm{Cl}(A,\sigma,f) = [C^+(A,\sigma,f)] = [C^-(A,\sigma,f) ] \in \frac{\mathrm{Br}(F)}{\langle(A)\rangle} = \frac{\rH^2(F,\mathbb{Q}/\mathbb{Z}(1))}{\langle(A)\rangle}.
\]

For a hermitian pair $\fp=((V,h),f)$ over $(D,\theta)$ with $\rk(\fp)\ind(D)$ even, we define the \emph{discriminant} $\mathrm{disc}(\fp)$ of $\fp$ to be the discriminant of its associated CSAQP, namely,
 \[
\disc(h,f):=\disc(\mathrm{End}_D(V),\mathrm{ad}_h,f).
 \] Since in the split case the discriminant of a CSAQP coincides with that of the associated quadratic form (\cite[(7.8)]{KMRT}),  we see that
\[
 e_1(\fp) = \mathrm{disc}(\fp)\in \rH^1(F)=\rH^1_+(D).
\] If $\disc(\fp)=0$, we can further define the \emph{Clifford invariant} $\mathrm{Cl}(\fp)$ to be the Clifford invariant of the CSAQP associated to $\fp$, i.e.,
\[
\mathrm{Cl}(h,f) := \mathrm{Cl}(\mathrm{End}_D(V),\mathrm{ad}_h,f) .
\]Because of the compatibility between the Clifford algebra of a quadratic form and its associated CSAQP (\cite[(8.8) and (8.11)]{KMRT}), we have
\[
e_2(\fp) = \mathrm{Cl}(\fp)\in \frac{\rH^2(F,\mathbb{Z}/4(1))}{\langle(D)\rangle}=\rH^2_+(D).
\]when $e_1(\fp)=\disc(\fp)$ vanishes. \hfill $\blacksquare$

\medskip

\newpara\label{5.8} Let $(h,f)$ be a hermitian pair of rank $m$ over $(D,\theta)$. Define the algebraic groups (cf. \cite[\S\,23.B]{KMRT})
\[
\begin{split}
 \bfU (h, f ) & := \bfO(\mathrm{End}_D(V),\mathrm{ad}_h,f), \\
 \bfSU(h,f) & := \bfO^+(\mathrm{End}_D(V),\mathrm{ad}_h,f),\\
 \bfSpin (h, f ) & := \bfSpin(\mathrm{End}_D(V),\mathrm{ad}_h,f) .
\end{split}
\]
There are short exact sequences \begin{equation}
\label{5.8.1}
1 \longrightarrow \bfSU(h,f) \longrightarrow\bfU(h,f) \longrightarrow\mathbb{Z}/2 \longrightarrow 1
\end{equation} and \begin{equation}
\label{5.8.2}
1 \longrightarrow \mu_2 \longrightarrow \bfSpin(h,f) \longrightarrow \bfSU(h,f) \longrightarrow 1.
\end{equation}

 The sequence \eqref{5.8.1} induces a map
 \[ \mathrm{disc}_{(h,f)} : \rH^1(F,\bfU (h,f))\longrightarrow\rH^1(F,\mathbb{Z}/2) \]
 called the \emph{relative discriminant with respect to $(h,f)$}. Note that the set $\rH^1(F,\bfU (h,f))$ can be identified with the set of isomorphism classes of hermitian pairs of the same rank $m$ over $(D,\theta)$. So if $((V',h'),f')$ is a hermitian pair of rank $m$, then it represents a class $[h',f']\in \rH^1(F,\bfU(h,f))$, and as in \cite{BayerFluckigerParimala} and \cite[\S\,7.4.1, p.112]{Gille}, we may define the relative discriminant
  \[
  \disc_{(h,f)}(h',f'):=\disc_{(h,f)}([h',f']).
  \]
 If there is an orthogonal decomposition $(h,f)=(h_1,f_1)\perp (h_2,f_2)$, and if $(h'_i,f_i')$ is a hermitian pair of the same rank as $(h_i,f_i)$, then we have (cf. \cite[\S\,7.4.1, p.112]{Gille})
\begin{equation}\label{5.8.3}
  \disc_{(h,f)}((h_1',f'_1)\perp (h_2', f'_2))=\disc_{(h_1,f_1)}(h_1',f_1')+\disc_{(h_2,f_2)}(h_2',f'_2).
\end{equation}

Let $\mathbb{H}_{2m}$ denote the hyperbolic hermitian pair over $(D,\theta)$ of rank $2m$. Then
\[
(h,f)\perp (-h,f)\cong \mathbb{H}_{2m}\cong (h',f')\perp (-h',f').
\]This together with \eqref{5.8.3} shows
\[
\begin{split}
\disc_{(h,f)}(h',f')&=\disc_{(h,f)}(h',f')+\disc_{(-h,f)}(-h,f)\\
&=\disc_{\mathbb{H}_{2m}}((h',f')\perp (-h,f))=\disc((h',f')\perp (-h,f)).
\end{split}
\]Here we have used the fact that
\begin{equation}\label{5.8.4}
\mathrm{disc}(h_0,f_0) = \mathrm{disc}_{\mathbb{H}_{2m}}(h_0,f_0)
\end{equation}for any hermitian pair $(h_0,f_0)$ of rank $2m$. To prove \eqref{5.8.4}, by base extension to the generic splitting field $F_+(D)$, we may reduce to the case of quadratic forms, in which case the result is classical (cf. \cite[\S\,IV.5, p.227]{Knus}).

If moreover $\ind(D)m=2n$ is even, then $\disc(h',f')$ and $\disc(-h,f)$ are defined and we have
\[
\disc((h',f')\perp (-h,f))=\disc(h',f')+\disc(-h,f).
\]It is clear from the definition that $\disc(-h,f)=\disc(h,f)$. So in this case we get
\begin{equation}\label{5.8.5}
\disc_{(h,f)}(h',f')=\disc(h',f')+\disc(h,f).
\end{equation}


We can also define a relative Clifford invariant for hermitian pairs with trivial relative discriminant. In fact, the sequence \eqref{5.8.2} induces a map
\[
\mathrm{Cl}_{(h,f)} : \rH^1\big(F,\bfSU (h,f)\big)\Big/ (\mathbb{Z}/2) \to \rH^2_{\mathrm{fppf}}(F,\mu_2)\Big/\langle (D)\rangle = \rH^2(F,\mathbb{Z}/2(1))\Big/\langle (D)\rangle\,,
\] called the \emph{relative Clifford invariant with respect to $(h,f)$}. Here $\rH^1(F,\bfSU (h,f))/ (\mathbb{Z}/2)$ means the quotient of $\rH^1(F,\bfSU (h,f))$ by the action of $\mathbb{Z}/2$ via the map $\mathbb{Z}/2 \to \rH^1(F,\bfSU (h,f))$ induced by \eqref{5.8.1}. If $(h',f')$ is a hermitian pair with $\disc_{(h,f)}(h',f')=0$, then it represents a class $[h',f']\in \rH^1(F,\bfSU (h,f))/ (\mathbb{Z}/2)$, so we can define
\[
\mathrm{Cl}_{(h,f)}(h',f'):=\mathrm{Cl}_{(h,f)}([h',f']).
\]
As an analog of \eqref{5.8.4}, we have
\begin{equation}\label{5.8.6}
\mathrm{Cl}(h_0,f_0) = \mathrm{Cl}_{\mathbb{H}_{2m}}(h_0,f_0)
\end{equation}for any hermitian pair $(h_0,f_0)$ of rank $2m$ with $\disc(h_0,f_0)=0$. If  $\ind(D)m=2n$ is even and $\disc(h,f)=\disc(h',f')=0$, we have
\begin{equation}\label{5.8.7}
  \mathrm{Cl}_{(h,f)}(h',f')=\mathrm{Cl}(h',f')+\mathrm{Cl}(h,f),
\end{equation}by \cite[Lem.\;7.4.6]{Gille}.\hfill $\blacksquare$

\begin{lemma}\label{5.9}
Let $(h,f)$ and $(h',f')$ be hermitian pairs of the same rank over $(D,\theta)$, and suppose
\[ \mathrm{disc}_{(h,f)}(h',f') = 0 \text{ and } \mathrm{Cl}_{(h,f)}(h',f') =0 . \]

\begin{enumerate}
\item The class $[(h',f')] \in \rH^1(F, \bfU (h,f))$ lifts to an element $\xi \in \rH^1(F,\bfSpin (h,f))$.
\item Any two lifts $\xi$, $\xi'$ of $[(h',f')]$ have the same image in $\rH^1(F,\bfSU (h,f))$.
\end{enumerate}
\end{lemma}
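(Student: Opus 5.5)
Both parts come from the nonabelian cohomology exact sequences attached to \eqref{5.8.1} and \eqref{5.8.2}, together with the definitions of $\disc_{(h,f)}$ and $\mathrm{Cl}_{(h,f)}$ recalled in \eqref{5.8}.

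For (1), the first step uses \eqref{5.8.1}. It gives an exact sequence of pointed sets
\[
\rH^1(F,\bfSU(h,f))\lra \rH^1(F,\bfU(h,f))\xrightarrow{\disc_{(h,f)}}\rH^1(F,\Z/2).
\]
Since $\disc_{(h,f)}(h',f')=0$, the class $[h',f']$ lies in the fiber over the trivial element. So it lifts to some $\zeta\in\rH^1(F,\bfSU(h,f))$.

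The second step uses \eqref{5.8.2}. Since $\mu_2$ is central in $\bfSpin(h,f)$, we get an exact sequence
\[
\rH^1_{\mathrm{fppf}}(F,\bfSpin(h,f))\lra \rH^1(F,\bfSU(h,f))\xrightarrow{\ \delta\ }\rH^2_{\mathrm{fppf}}(F,\mu_2),
\]
and $\mathrm{Cl}_{(h,f)}$ is induced by $\delta$ after passing to the quotients by $\Z/2$ and by $\langle (D)\rangle$. The hypothesis $\mathrm{Cl}_{(h,f)}(h',f')=0$ only says that $\delta(\zeta)\in\langle(D)\rangle$ for our chosen lift $\zeta$; it does not say $\delta(\zeta)=0$. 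So the key point is to show that the ambiguity $\langle (D)\rangle$ is exactly produced by changing the lift $\zeta$ through the $\Z/2$-action. Concretely, I will check that if $\zeta'$ is the other lift, obtained by twisting with the nontrivial element of $\Z/2$ (that is, by an element of $\bfU(h,f)(F)\setminus\bfSU(h,f)(F)$, or its cocycle substitute when there is no such rational point), then $\delta(\zeta')=\delta(\zeta)+(D)$. Granting this, we can choose the lift $\zeta$ with $\delta(\zeta)=0$, and it then lifts to $\bfSpin(h,f)$.

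I expect this formula for how $\Z/2$ acts on $\delta$ to be the main obstacle. It is the standard fact that the connecting map $\rH^0(F,\Z/2)\to\rH^1(F,\bfSU)$ followed by $\delta$, computed through the extended Clifford group, shifts the class by the Brauer class of $A=\End_D(V)$; see \cite[(13.37), \S\,31]{KMRT}. I would reduce to that description, or more simply treat the quotient by $\langle(D)\rangle$ as built into the definition of $\mathrm{Cl}_{(h,f)}$, exactly as in \cite[\S\,7.4]{Gille}.

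For (2), suppose $\xi,\xi'$ are two lifts of $[h',f']$ to $\rH^1(F,\bfSpin(h,f))$, with images $\zeta,\zeta'$ in $\rH^1(F,\bfSU(h,f))$. These images have the same image in $\rH^1(F,\bfU(h,f))$. By the twisting description of the fibers of $\rH^1(F,\bfSU)\to\rH^1(F,\bfU)$ (here $\Z/2$ is abelian), $\zeta'$ is either $\zeta$ or its image under the nontrivial element of $\Z/2$. In the second case we would have $\delta(\zeta')=\delta(\zeta)+(D)$. But both classes lift to $\bfSpin$, so $\delta(\zeta)=\delta(\zeta')=0$, which forces $(D)=0$. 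In that case the two lifts may genuinely differ. To handle this, I will instead use the fact that when $(D)$ is trivial the twisted group contains an improper rational similitude-free element, namely a hyperplane reflection coming from an anisotropic vector of the twisted form. This element lifts to the twisted $\bfO$ and makes the $\Z/2$-action trivial on that fiber. Hence $\zeta=\zeta'$.
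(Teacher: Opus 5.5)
Your argument is essentially the paper's. In (1) you lift $[h',f']$ to $\zeta\in\rH^1(F,\bfSU(h,f))$ using $\disc_{(h,f)}(h',f')=0$. If $\delta(\zeta)=(D)$, you replace $\zeta$ by $1\cdot\zeta$, using the shift formula $\delta(1\cdot\zeta)=\delta(\zeta)+(D)$, which the paper takes from \cite[Lem.\;7.4.5 (2)]{Gille}. In (2) you use the same formula to rule out $\zeta'=1\cdot\zeta\neq\zeta$.

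The one genuine difference is in (2), and it is in your favour. The paper concludes from Gille's lemma that $\delta(\eta)-\delta(\eta')\neq 0$, but that difference equals $(D)$, so the step only works when $D$ is not split. You noticed that $(D)=0$ gives no contradiction and treated that case separately, and your fix is correct. By the twisting description of fibres, the stabilizer of $\zeta$ under $\Z/2$ is the image of $\bfU(h',f')(F)\to\Z/2$, where $\bfU(h',f')$ is the twist of $\bfU(h,f)$ by $\zeta$. For split $D$, the pair $(h',f')$ is Morita equivalent to a nondegenerate quadratic form of positive even dimension. Such a form has an anisotropic vector, hence a reflection, which is an improper isometry in every characteristic (Dickson invariant $1$ in characteristic $2$). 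So $1\cdot\zeta=\zeta$. Your phrase ``improper rational similitude-free element'' should simply read ``improper rational isometry'', and the element does not need to ``lift'': it already lies in $\bfU(h',f')(F)$ and maps to $1\in\Z/2$.

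One small caution on (1). The alternative you float, ``treat the quotient by $\langle(D)\rangle$ as built into the definition of $\mathrm{Cl}_{(h,f)}$'', would not be a proof. That quotient only makes $\mathrm{Cl}_{(h,f)}$ well defined on $\Z/2$-orbits. It is the shift formula itself that guarantees some lift in the orbit has $\delta=0$.
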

\begin{proof}
$(1)$ The cohomological exact sequence induced by \eqref{5.8.1} shows that $[h',f'] \in \rH^1(F,\bfU(h,f))$ comes from an element $\eta \in \rH^1(F,\bfSU(h,f))$. The assumption on $\mathrm{Cl}_{(h,f)}(h',f')$ means that the connecting map
\[
\delta : \rH^1(F,\bfSU(h,f)) \to \rH^2_{\mathrm{fppf}}(F,\mu_2)=\Br(F)[2]
\] arising from the cohomology of \eqref{5.8.2} sends $\eta$ to $0$ or $(D)$. If $\delta(\eta)=0$, the statement is clear. So assume $\delta(\eta) \neq 0$. Let $\eta' \in \rH^1(F,\bfSU(h,f))$ be the element obtained from $\eta$ by the action of $1\in \mathbb{Z}/2$. Then $\eta'$ is another lift of $[h',f']$. Thanks to \cite[Lem.\;7.4.5 (2)]{Gille}, $\delta(\eta')=\delta(\eta) + (D) =0 \in \rH^2_{\mathrm{fppf}}(F,\mu_2)$. Thus $\eta'$ comes from an element $\xi \in \rH^1(F,\bfSpin (h,f))$.

$(2)$ Let $\rho : \bfSpin (h,f) \to \bfSU(h,f)$ be the canonical homomorphism. If $\eta := \rho_\ast(\xi)$ and $\eta' := \rho_\ast(\xi')$ are distinct, then they are the image of each other by the action of $1\in \mathbb{Z}/2$, because they both lift $[(h',f')]$. By
\cite[Lem.\;7.4.5 (2)]{Gille}, it follows that $\delta(\eta)-\delta(\eta') \neq 0 \in \rH^2_{\mathrm{fppf}}(F,\mu_2)$. But
 \[ \delta(\eta) = \delta(\rho_\ast(\xi)) =0 = \delta(\rho_\ast(\xi')) =  \delta(\eta'), \]
 hence a contradiction.
\end{proof}

\newpara\label{5.10} Let $(h,f)$ and $(h',f')$ be hermitian pairs of the same rank $m$ over $(D,\theta)$, and assume that $\ind(D)m=2n\geqslant 4$ is even. In this case the group $\bfSpin(h,f)$ is a semisimple simply connected group of type $D_n$, so it has a Rost invariant map
 \[ \sR_{(h,f)}:=\sR_{\bfSpin(h,f)} : \rH^1\big(F,\bfSpin(h,f)\big) \longrightarrow \rH^3\big(F,\mathbb{Q}/\mathbb{Z}(2)\big) .\]
Let $i_*: \rH^1(F,\mu_2) \to \rH^1\big(F,\bfSpin(h,f)\big)$ be the natural map induced by the inclusion $\mu_2 \to \bfSpin(h,f)$. Then for every $x \in \rH^1(F,\mu_2)$ we have
\begin{equation}\label{5.10.1}
  R_{(h,f)} \circ i_\ast (x) = x \cup (D),
\end{equation}by \cite[p.441]{KMRT} and \cite[Example\;10.4]{GaribaldiMerkurjev17}. (The formula in \cite[(10.5)]{GaribaldiMerkurjev17} should be corrected for $n$ odd, noticing that $\mu_2$ is not the center of the spin group.)

Now suppose moreover that $\mathrm{disc}_{(h,f)}(h',f')=0$ and $\mathrm{Cl}_{(h,f)}(h',f')=0$.

Then, by Lemma \ref{5.9} (1), the class $[(h',f')] \in \rH^1(F, \bfU (h,f))$ is the image of some $\xi \in \rH^1(F,\bfSpin (h,f))$ under the composite map
\[
\rH^1\big(F,\bfSpin(h,f)\big) \longrightarrow \rH^1\big(F,\bfSU(h,f)\big) \longrightarrow \rH^1\big(F,\bfU(h,f)\big) . \]
Combining Lemma\;\ref{5.9} (2) with the formula \eqref{5.10.1}, we can deduce from a well known twisting formula \cite[Lem.\;7]{Gille00Ktheory} that the \emph{relative Rost invariant}
\[
\sR_{(h,f)}(h',f') := \sR_{(h,f)}(\xi) \in \frac{\rH^3\big(F,\mathbb{Q}/\mathbb{Z}(2)\big)}{F^* \cup (D)}
\]is well defined. This invariant is Morita invariant, because the group $\bfSpin(h,f)$ depends only on the CSAQP associated to $(h,f)$.

If $(h_0,f_0)$ is a hermitian pair of even rank $2m$ with trivial discriminant and trivial Clifford invariant, we can define its \emph{Rost invariant} by
\[
\sR(h_0,f_0):=\sR_{\mathbb{H}_{2m}}(h_0,f_0)\in \frac{\rH^3\big(F,\mathbb{Q}/\mathbb{Z}(2)\big)}{F^* \cup (D)} .
\]

Suppose that $e_1(h,f)=\disc(h,f)=0$ and $e_2(h,f)=\mathrm{Cl}(h,f)=0$, so that the invariant $e_3(h,f)\in \rH^3(K)$ is defined over $K:=F_+(D)$. Note that
\[
\rk(h_K,f_K)=\ind(D_K)\rk(h_K,f_K)=\ind(D)m=2n.
\]So the Rost invariant $\sR(h_K,f_K)=\sR_{\mathbb{H}_{2n}}(h_K,f_K)\in \rH^3(K)$ is defined. Since in the split case this Rost invariant coincides with the Arason invariant for quadratic forms (\cite[p.107, Example\;2.3 and p.144, Remark\;13.6]{Merkurjev03inGMScohinv}), we can conclude by Morita invariance that
\[
  e_3(h,f)=\sR(h_K,f_K)\in \rH^3(K).
\]
If moreover $(h,f)$ has even rank, so that $\sR(h,f)$ is defined, then the above formula shows that
\begin{equation}\label{5.10.2}
  e_3(h,f)=\Res_{K/F}\big(\sR(h,f)\big)\in \rH^3(K).
\end{equation}In particular, the invariant $e_3(h,f)$ descends to the base field $F$ in this case; we can even view $e_3(h,f)$ as an element of the quotient group $ \rH^3\big(F,\mathbb{Q}/\mathbb{Z}(2)\big)/F^* \cup (D)$ if $\ind(D)\,|\,4$ or $D$ is a decomposable division algebra, by Thm.\;\ref{3.4} (4).  \hfill $\blacksquare$

\begin{remark}\label{5.11}
As shown in  \cite[\S\,5.A]{BarryMasqueleinQueguinerMathieu22}, under an extra condition one can define a relative Rost invariant for quadratic pairs, in a way that is compatible with our definition for hermitian pairs. \hfill $\blacksquare$
\end{remark}

\subsection{A hyperbolicity criterion in the quaternion case}

For any prime number $p$, let $\sd_p(F)$ denote the \emph{separable $p$-dimension} of $F$ as defined in \cite[p.62]{Gille00Ktheory}.

\begin{prop}\label{5.12}
Assume that $\mathrm{ind}(D)=2$ and $\mathrm{sd}_2(F) \leqslant 3$. Then a hermitian pair $\fp$ over $(D,\theta)$ is hyperbolic if and only if
$e_1(\fp) = e_2(\fp) = e_3(\fp) =0$.
\end{prop}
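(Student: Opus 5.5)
By Theorem\;\ref{5.5}, $\fp$ is hyperbolic if and only if $e_n(\fp)=0$ for all $n\geqslant 1$. Since $\ind(D)=2$, the integer $\rk(\fp)\ind(D)$ is automatically even, so all these invariants make sense. The ``only if'' direction is therefore immediate. For the converse, assume $e_1(\fp)=e_2(\fp)=e_3(\fp)=0$. I will show by induction on $n\geqslant 4$ that once $e_{n-1}(\fp)=0$, the class $e_n(\fp)$ (which is then defined) is zero. The whole point is that for $n\geqslant 4$ the group $\rH^n_+(D)$ in which $e_n(\fp)$ lives (Lemma\;\ref{5.3}) is itself trivial.

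To see this, let $X=\SB(D)$, a smooth conic, and $K=F_+(D)=F(X)$. By \eqref{3.2} (Bloch--Kato), the group $\rH^n_+(D)=\rH^n_{nr}(X,\Z/2(n-1))$ injects into $\rH^n_{nr}(X,\Q/\Z(n-1))$. Its image lies in the $2$-primary part $\rH^n_{nr}(X,\Q_2/\Z_2(n-1))$, since the decomposition $\Q/\Z(i)=\bigoplus_p\Q_p/\Z_p(i)$ is compatible with the unramified subgroups. Now Thm.\;\ref{3.4}\,(4) with $\ind(D)=2$ says that $\coker(\eta^n)=0$ for $n\geqslant 3$. Restricting to $2$-primary components, the map
\[
\rH^n\big(F,\Q_2/\Z_2(n-1)\big)\lra \rH^n_{nr}\big(X,\Q_2/\Z_2(n-1)\big)
\]
is surjective. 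It therefore suffices to show that $\rH^n(F,\Q_2/\Z_2(n-1))=0$ for $n\geqslant 4$.

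This is where the hypothesis $\sd_2(F)\leqslant 3$ enters, and it is the step I expect to require the most care, mainly in characteristic $2$. By the definition in \cite{Gille00Ktheory}, $\sd_2(F)\leqslant 3$ means that $\rH^{4}(L,\Z/2(3))=0$ for all finite separable extensions $L/F$. In characteristic $\neq 2$ this is the statement $\mathrm{cd}_2(F)\leqslant 3$, which gives vanishing of all $\rH^n(F,\mu_{2^r}^{\otimes(n-1)})$ for $n\geqslant 4$. In characteristic $2$ it is the vanishing of the Kato groups $\rH^{4}_2$. In both cases I need to pass from this to two things: vanishing in all degrees $n\geqslant 4$, and vanishing with $\Z/2^r$ and then $\Q_2/\Z_2$ coefficients. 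For the change of coefficients I plan to use the Bloch--Kato injectivity recalled in \eqref{3.2}, together with the exact sequences $0\to\Z/2(n-1)\to\Z/2^{r+1}(n-1)\to\Z/2^r(n-1)\to0$, inducting on $r$ and taking the direct limit. The passage to higher $n$ is standard: $\rH^n$ is generated by symbols, being a quotient of Milnor $K$-theory (resp. of $K^M_{n-1}\otimes F$ in characteristic $2$), and such symbols can be written as cup products with classes of degree $4$ over finite separable extensions.

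Once this vanishing is established, the induction above gives $e_n(\fp)=0$ for every $n\geqslant 4$. Combined with the hypothesis $e_1(\fp)=e_2(\fp)=e_3(\fp)=0$, Theorem\;\ref{5.5} then shows that $\fp$ is hyperbolic.
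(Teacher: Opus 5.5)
Your proposal is correct and follows essentially the same route as the paper. Both arguments combine Theorem~\ref{5.5} with the observation, via Thm.~\ref{3.4}\,(4), that $\rH^n_+(D)$ sits inside a quotient of the $2$-primary part of $\rH^n(F,\Q/\Z(n-1))$, and that this part vanishes for $n\geqslant 4$ when $\sd_2(F)\leqslant 3$. The only difference is that you spell out the vanishing step (change of coefficients and passage to higher degrees via symbols), which the paper simply asserts.
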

\begin{proof}
By Thm.\;\ref{3.4}, the assumption $\ind(D)=2$ guarantees that $\rH^n_+(D)$ is a quotient of the 2-primary torsion subgroup of $\rH^n(F,\Q/\Z(n-1))$ for every $n\geqslant 1$.
But if $\mathrm{sd}_2(F) \leqslant 3$, $\rH^n(F,\Q/\Z(n-1))$ has no 2-torsion for every $n\geqslant 4$.
So the result follows from Theorem\;\ref{5.5}.
\end{proof}

Prop.\;\ref{5.12} allows us to generalize \cite[Prop.\;15]{Berhuy07ArchMath} to arbitrary characteristic as follows.

\begin{thm}\label{5.13}
Assume that $\mathrm{ind}(D) \leqslant 2$ and $\mathrm{sd}_2(F) \leqslant 3$. Then a hermitian pair of even rank over $(D,\theta)$ is hyperbolic if and only if it has
trivial discriminant, trivial Clifford invariant and trivial Rost invariant.
\end{thm}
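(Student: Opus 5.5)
The plan is to reduce to Prop.\;\ref{5.12} by identifying $e_1,e_2,e_3$ with the discriminant, the Clifford invariant and the Rost invariant. The case $\ind(D)=1$ is treated separately.

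First, if $\ind(D)=1$, then by Morita equivalence (\eqref{2.11}) a hermitian pair over $(D,\theta)$ amounts to a nondegenerate quadratic form, via \eqref{5.1}. Its dimension is the rank, which is even. Here $e_1$ and $e_2$ are the discriminant and the Clifford invariant by \eqref{5.7}. The Rost invariant is the Arason invariant $e_3$ by \eqref{5.10.2}; in the split case, the quotient by $F^*\cup(D)$ is trivial and $K=F(\SB(D))$ is rational over $F$, so restriction is injective. Since $\sd_2(F)\le 3$, the $2$-primary part of $\rH^4(F)$ vanishes. Hence $e_4=0$ and the Witt class lies in $\rI^4_q(F)$, and likewise $\rI^n_q(F)$ for all $n$. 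By \eqref{3.9} the form is hyperbolic. Alternatively, one can run the argument of Prop.\;\ref{5.12} with $\rH^n_+(D)=\rH^n(F)$.

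Now take $\ind(D)=2$ and let $\fp=(h,f)$ have even rank $m$. Then $\rk(\fp)\ind(D)=2m$ is even, so all $e_n(\fp)$ are available. The forward direction is clear: a hyperbolic pair has every invariant trivial. For instance, by \eqref{5.8.4} and \eqref{5.8.6} it is $\mathbb{H}$ relative to itself, and the Rost invariant of the trivial class is $0$. For the converse, assume $\disc(\fp)=0$, $\mathrm{Cl}(\fp)=0$ and $\sR(\fp)=0$. By \eqref{5.7}, $e_1(\fp)=\disc(\fp)=0$ in $\rH^1_+(D)\cong\rH^1(F)$. Then $e_2(\fp)=\mathrm{Cl}(\fp)=0$ in $\rH^2_+(D)$. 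Since $e_1=e_2=0$ and the rank is even, \eqref{5.10.2} gives $e_3(\fp)=\Res_{K/F}(\sR(\fp))$. The class $\sR(\fp)$ is only defined modulo $F^*\cup(D)$. However, $(D)_K=0$, so the restriction is well defined, and it vanishes because $\sR(\fp)=0$ there. Therefore $e_3(\fp)=0$, and Prop.\;\ref{5.12} yields hyperbolicity.

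The main point to check carefully is the use of \eqref{5.10.2}. It requires $\ind(D)m=2n\ge 4$ so that $\bfSpin$ is of type $D_n$ and the Rost invariant exists. With $m$ even and $\ind(D)=2$ we have $2m\ge4$ whenever $m\ge 2$, and the case $m=0$ is trivial. In the split case with $m=2$, the form is $2$-dimensional and the Rost invariant convention is degenerate. There a $2$-dimensional form with trivial discriminant is already hyperbolic, so this case is handled directly.
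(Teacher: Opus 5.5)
Your proposal is correct and follows essentially the same route as the paper: you reduce the split case to quadratic forms, and for $\ind(D)=2$ you identify $e_1,e_2,e_3$ with the discriminant, the Clifford invariant and the Rost invariant via \eqref{5.7} and \eqref{5.10.2}, so that Prop.~\ref{5.12} applies. You also spell out two points the paper leaves implicit: the forward direction, and the degenerate split rank-$2$ case, which falls outside the hypothesis $\ind(D)m\geqslant 4$ of \eqref{5.10}.
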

\begin{proof}
  The case $\ind(D)=1$ reduces to the well known case for quadratic forms. We may thus assume $\ind(D)=2$. In this case, the invariants $e_1,e_2$ and $e_3$ agree with the corresponding classical invariants even over the base field, as we have seen in \eqref{5.7} and \eqref{5.10}. So the theorem follows from Prop.\;\ref{5.12}.
\end{proof}

\noindent \emph{Acknowledgements.}  We thank Philippe Gille, Ting-Yu Lee, Anne Qu\'eguiner-Mathieu and Zhengyao Wu for helpful discussions.

\addcontentsline{toc}{section}{\textbf{References}}

\bibliographystyle{alpha}
\bibliography{HuLourdeaux}

\

Yong HU

\medskip

Department of Mathematics

Southern University of Science and Technology


Shenzhen 518055, China


Email: huy@sustech.edu.cn

\

Alexandre LOURDEAUX

\medskip

SUSTech International Center for Mathematics

Southern University of Science and Technology


Shenzhen 518055, China


Email: alexandre.lourdeaux@outlook.com


\end{document}